\documentclass[11pt,a4paper]{preprint}

\usepackage[margin = 3cm]{geometry}
\usepackage{soul} 

\usepackage[full]{textcomp}

\usepackage[utf8]{inputenc}
\usepackage[T1]{fontenc}

\usepackage{newtxtext} 

\usepackage{amstext, amsbsy, amssymb, amsmath, amsthm, amsfonts}

\usepackage[toc,page]{appendix}

\usepackage[english]{babel}

\usepackage{bbm}

\usepackage{bm}

\usepackage{booktabs}

\usepackage{cancel} 

\usepackage{caption}

\usepackage{commath}

\usepackage{enumitem}

\usepackage{graphicx}

\usepackage{hyperref} 
\hypersetup{colorlinks=true, urlcolor= black, linkcolor=blue, citecolor=teal}

\usepackage{longtable}

\usepackage{marginnote}

\usepackage{mathrsfs}

\usepackage{mathtools}

\usepackage{nicefrac}

\usepackage{subcaption}

\usepackage{tcolorbox}
\usepackage{import}
\usepackage{xifthen}
\usepackage{pdfpages}
\usepackage{transparent}

\theoremstyle{plain}
\newtheorem{thm}{Theorem}[section]
\newtheorem{lem}[thm]{Lemma}

\newtheorem{prop}[thm]{Proposition}
\newtheorem{cor}[thm]{Corollary}

\theoremstyle{definition}
\newtheorem{definition}[thm]{Definition}

\theoremstyle{remark}

\newcommand{\be}[1]{\begin{equation}\label{#1}}
	\newcommand{\ee}{\end{equation}}
\numberwithin{equation}{section}

\newcommand{\ba}[1]{\begin{align}\label{#1}}
	\newcommand{\ea}{\end{align}}
\numberwithin{equation}{section}

\newcommand{\ben}{\begin{equation*}}
	\newcommand{\een}{\end{equation*}}
\numberwithin{equation}{section}

\mathtoolsset{showonlyrefs, showmanualtags}

\newcommand{\scrC}{\mathscr{C}}

\newcommand{\calE}{\mathcal{E}}
\newcommand{\calF}{\mathcal{F}}

\newcommand{\calH}{\mathcal{H}}

\newcommand{\calW}{\mathcal{W}}

\newcommand{\bbE}{\mathbb{E}}

\newcommand{\bbG}{\mathbb{G}}

\newcommand{\bbL}{\mathbb{L}}

\newcommand{\bbN}{\mathbb{N}}

\newcommand{\bbP}{\mathbb{P}}

\newcommand{\bbR}{\mathbb{R}}
\newcommand{\bbS}{\mathbb{S}}

\newcommand{\bbU}{\mathbb{U}}
\newcommand{\bbV}{\mathbb{V}}

\newcommand{\bbZ}{\mathbb{Z}}

\newcommand{\bfS}{\mathbf{S}}
\newcommand{\bfT}{\mathbf{T}}

\newcommand{\sfC}{\mathsf{C}}

\newcommand{\eps}{\varepsilon}

\newcommand{\La}{\Lambda}
\newcommand{\om}{\omega}

\renewcommand{\abs}[1]{\left\vert #1 \right\vert}

\newcommand{\ind}{\boldsymbol 1}

\newcommand{\lr}[1][]{\xleftrightarrow{\: #1 \:\:}}

\begin{document}
	\title{The Wulff crystal of self-dual FK-percolation becomes round when approaching criticality}

	\author{Ioan Manolescu$^{\dagger}$, Maran Mohanarangan$^{\ast}$}
	
	\institute{
		$^\dagger$ University of Fribourg, \url{ioan.manolescu@unifr.ch} \newline $^\ast$ University of Fribourg \& University of Innsbruck,  \url{maran.mohanarangan@unifr.ch}
		\vspace{5pt}
		}

	\date{\today}
	
	\maketitle
	
	\begin{abstract}
		The study of the phase transition in planar FK-percolation on the square lattice has seen significant recent breakthroughs. The model undergoes a change in the nature of its phase transition at $q = 4$, transitioning from a continuous to a discontinuous regime. The aim of this article is to investigate the behaviour of the model in the discontinuous regime as $q > 4$ approaches the continuous transition point $4$ from above, while maintaining the critical parameter $p = p_c(q)$. We prove that in this limit, the correlation length becomes isotropic. The core of the proof builds upon the recently established rotational invariance of the large-scale features of the model at $q = 4$ \cite{DCKozKraManOulRotationalInvarianceCritPlanar}.
	\end{abstract}

	\tableofcontents
	
	\section{Introduction}
	
	\subsection{Definition of the model and main result}
	The \emph{random-cluster model}, introduced by Fortuin and Kasteleyn and hence also referred to as \emph{Fortuin--Kasteleyn (FK) percolation}, 
	is a central model of statistical mechanics. We consider it here on the two-dimensional square lattice $\bbZ^2$, where it exhibits a first-order or higher-order phase transition, depending on the cluster-weight $q$. 
	We recall its definition and a few basic properties below. For more background, we refer the reader to the monograph \cite{GrimmettRandomCluster} and, for an exposition of more recent results, to the lecture notes \cite{ManFKLectureNotes}.
	
	We consider the square lattice $(\bbZ^2,\bbE)$, that is, the graph with vertex set $\bbZ^2 = \{(n,m): n,m \in \bbZ^2\}$ and edges between nearest neighbours. We will slightly abuse notation and denote the graph itself by $\bbZ^2$. Let $G = (V,E)$ be a subgraph of the square lattice. A percolation configuration $\om$ on $G$ is an element of $\{0,1\}^E$. An edge $e \in E$ is said to be \emph{open} in $\om$ if $\om_e = 1$ and \emph{closed} otherwise. A configuration $\om$ can be seen as a subgraph of $G$ with vertex set $V$ and edge set $\{e \in E: \om_e = 1\}$. When speaking of \emph{connections} in $\om$, we view $\om$ as a graph. A \emph{cluster} is a maximal connected component of $\om$ (it may be an isolated vertex). When $G$ is finite, let $o(\om)$ and $c(\om)$ denote the number of open and closed edges in $\om$, respectively. Furthermore, let $k(\om)$ denote the number of clusters in $\om$. Then the random-cluster measure on $G$ with parameters $p \in [0,1]$ and $q > 0$, and free boundary conditions is a measure on percolation configurations given by
	\begin{align}
		\phi^0_{G,p,q}[\om] = \frac{1}{Z^{0}_{G,p,q}}p^{o(\om)}(1-p)^{c(\om)}q^{k(\om)},
	\end{align}
	where $Z^{0}_{G,p,q}$ is a normalising constant called the \emph{partition function}.
	
	For $q \geq 1$, the model can be extended to infinite volume where it exhibits a phase transition. More precisely, the family of measures $\phi^0_{G,p,q}$ converges weakly as $G$ tends to $\bbZ^2$. The limiting measure, denoted by $\phi^0_{\bbZ^2,p,q}$, undergoes a phase transition in the sense that there exists a critical threshold $p_c \coloneqq p_c(q) \in (0,1)$ such that the probability that there exists an infinite cluster under $\phi^0_{\bbZ^2,p,q}$ is 0 for all $p < p_c$ and 1 if $p > p_c$. It has been shown \cite{BeffaraDCSelfDualPointCritical} that the critical threshold is $p_c = \sqrt{q}/(1+\sqrt{q})$.
	
	The behaviour at the critical parameter $p_c$ is of great interest. One important question is whether the phase transition is \emph{continuous} or \emph{discontinuous}, meaning here whether the probability that $0$ is contained in an infinite cluster is continuous or discontinuous as a function of $p$. The phase transition was shown to be continuous for $1 \leq q \leq 4$ \cite{DCSidoraviciusTassionContPT} 
	and discontinuous when $q > 4$ \cite{DCGagnebinHarelManoTassionDiscontPT}. Alternative proofs of these two regimes were obtained in \cite{GlaLamDelocalisation} and \cite{RaySpinkaDiscontPT}, respectively.
	
	The behaviour at criticality differs drastically between the two regimes. In the regime $1 \leq q \leq 4$, the measure at the point of phase transition is unique and exhibits properties indicative of scale invariance, such as the Russo--Seymour--Welsh property \eqref{eq:RSW}; it is expected to have a non-trivial, conformally invariant scaling limit. 
	In contrast, for $q > 4$, there are multiple measures at the point of phase transition, with either sub- or super-critical behaviour, but no critical measure exists~\cite{GlaManGibbsFKPotts}. 
	In particular, in the first case the correlation length of the model diverges when approaching the critical point, while in the second case it remains bounded. 
	
	It was recently proved in \cite{DCKozKraManOulRotationalInvarianceCritPlanar} that for $1 \leq q \leq 4$, the model exhibits asymptotic rotational invariance at criticality, 
	in line with its conjectured conformal invariance. 
	More precisely, the model and its rotation by an arbitrary angle $\theta$ may be coupled so that, at large scales, they produce similar configurations with high probability. 
	In particular, any subsequential scaling limit of the critical model is invariant under all rotations. 
	This result will be instrumental in the proof of our main theorem.

	In this paper, we will be particularly interested in the discontinuous phase transition, $q > 4$. 
	For the exposition of our results, we need to introduce two central quantities. 
	Fix $\theta \in [0,2\pi)$ and define $e_\theta$ as the unit vector with angle $\theta$ to the horizontal axis. 
	For $q \geq1$ and $p \leq p_c$, the \emph{correlation length} of the model in direction $\theta$ is defined as the limit
	\begin{align}
		\xi_{p,q}(\theta) \coloneqq \Big(\lim_{n \to \infty} - \tfrac1n \log \phi^0_{\bbZ^2,p,q}\big[ 0 \lr \lfloor n e_\theta \rfloor\big]\Big)^{-1},
	\end{align}
	where $\lfloor n e_\theta \rfloor$ is the vertex of $\bbZ^2$ closest to $n e_\theta \in \bbR^2$. 
	Similarly, we define the \emph{point-to-hyperplane decay rate} by
	\begin{align}
		\zeta_{p,q}(\theta) \coloneqq \Big(\lim_{n \to \infty} -\tfrac1n \log\phi^0_{\bbZ^2,p,q}\big[0 \lr \calH_{\geq n}^\theta\big]\Big)^{-1},
	\end{align}
	where $\calH_{\geq n}^\theta$ is the half-space defined by
	\begin{align}
		\calH_{\geq n}^\theta \coloneqq \{x \in \bbR^2: \langle x,e_\theta \rangle \geq n\} \quad \text{ for $n\geq 0$}.
	\end{align}
	The existence of both limits follows from standard sub-additivity arguments.
	The two quantities are related via a so-called \emph{convex duality} (see e.g.\ \cite{OttDecayRates}). More precisely, it holds that
	\begin{align}\label{eq:convex-duality}
		\big(\xi_{p,q}(\theta)\big)^{-1} = \sup_{\theta' \in [0,2\pi)} \big(\zeta_{p,q}(\theta')\big)^{-1} \langle e_{\theta},e_{\theta'}\rangle.
	\end{align}

	A hallmark of the regime $q > 4$ is the finiteness of $\xi_{p_c,q}(\theta)$ and $\zeta_{p_c,q}(\theta)$ for all $\theta \in [0,2\pi)$. 
	It should be mentioned that $\xi_{p,q}(\theta)$ and $\zeta_{p,q}(\theta)$ are both finite for all $p< p_c$ and $q \geq 1$ \cite{BeffaraDCSelfDualPointCritical,DumRaoTasSharpnessFK}. 
	However, when $p = p_c$ and $q \in [1,4]$, both quantities are infinite \cite{DCSidoraviciusTassionContPT}. 
	
	The asymptotic rotational invariance of the model in the regime $1 \leq q \leq 4$ is a strong indication that $\xi_{p_c,q}$ and $\zeta_{p_c,q}$ become isotropic as $q \searrow 4$. 
	Our main result confirms this. Note that while both $\xi_{p_c,q}(\theta)$ and $\zeta_{p_c,q}(\theta)$ diverge for all $\theta$ as $q \searrow 4$, our result states that these quantities become asymptotically isotropic when renormalised.
	
	\begin{thm}\label{thm:rotational_invariance}
		For all $\eps >0$, there exists $q_0 > 4$ such that for all $q \in (4,q_0]$ and any $\theta_1, \theta_2 \in [0,2\pi)$, we have
		\begin{align}\label{eq:rot inv}
			\bigg|\frac{\xi_{p_c,q}(\theta_1) }{\xi_{p_c,q}(\theta_2)} -1\bigg| < \eps 
			\quad \text{and} \quad 
			\bigg|\frac{\zeta_{p_c,q}(\theta_1) }{\zeta_{p_c,q}(\theta_2)} -1\bigg| < \eps.
		\end{align}
	\end{thm}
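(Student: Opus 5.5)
We first reduce to the statement for $\zeta$. By the convex duality \eqref{eq:convex-duality}, $\xi^{-1}$ is the support function of the convex set whose polar is determined by $\zeta^{-1}$. If $\zeta_{p_c,q}(\theta')\in[(1-\eps)L,(1+\eps)L]$ for all $\theta'$, then plugging $\theta'=\theta$ into the supremum gives $\xi_{p_c,q}(\theta)^{-1}\ge \zeta_{p_c,q}(\theta)^{-1}\ge ((1+\eps)L)^{-1}$. Conversely, the bound $\langle e_\theta,e_{\theta'}\rangle\le 1$ gives $\xi_{p_c,q}(\theta)^{-1}\le ((1-\eps)L)^{-1}$. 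So it suffices to show that $\zeta_{p_c,q}$ is asymptotically isotropic.

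The plan is to compare $\zeta_{p_c,q}(\theta_1)$ and $\zeta_{p_c,q}(\theta_2)$ through a renormalisation at a scale where the model at $q$ is well approximated by the critical model at $q=4$. Fix a large scale $N$ and a small $\delta>0$. By the continuity of finite-volume measures in $q$, the measure $\phi^0_{p_c(q),q}$ restricted to a box of size $CN$ is close in total variation to the $q=4$ critical measure, for $q$ close enough to $4$ depending on $N$.

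The key input is the coupling from \cite{DCKozKraManOulRotationalInvarianceCritPlanar}. At $q=4$, the model and its rotation by $\theta_2-\theta_1$ can be coupled so that, at scale $N$, crossing events of macroscopic annuli and rectangles coincide with probability $1-\delta$. The crucial step is a uniform-in-$q$ version of this statement. We need that for $q\in(4,q_0]$ and all scales $N$ up to a constant times the correlation length, crossing probabilities of a rectangle of aspect ratio $k$ in direction $\theta_1$ and in direction $\theta_2$ differ by at most $\delta$. We plan to obtain this by running the argument of \cite{DCKozKraManOulRotationalInvarianceCritPlanar}, which relies on RSW \eqref{eq:RSW} and a homotopy in the anisotropic six-vertex / isoradial embedding, for $q$ slightly above $4$. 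This is possible because RSW holds for the near-critical $q>4$ model up to the scale where the correlation length becomes relevant (\cite{DCGagnebinHarelManoTassionDiscontPT} gives control via the six-vertex mapping).

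Granting this, we build $\zeta$ from block events. Consider scale $N=c\,\zeta_{p_c,q}(\theta_1)$, and let $u_\theta(N)$ denote the probability of crossing a long strip of width $N$ in the direction $\theta$. By the finite-energy/FKG gluing and sub-multiplicativity, $-\log\phi[0\lr\calH^\theta_{\ge kN}]$ is comparable to $k\cdot(-\log u_\theta(N))$, up to errors of order $k\cdot O(\text{gluing cost})$. The rotational comparison of crossing probabilities then transfers across directions and yields $\zeta(\theta_1)/\zeta(\theta_2)\in[1-\eps,1+\eps]$.

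The main obstacle is making the errors multiplicative rather than additive. Constant gluing costs per block give errors of order $1$ in the exponent per block, which are comparable to the main term when $N$ is of the order of the correlation length. To handle this, we would choose $N=K\zeta$ with $K$ large, so that each block costs roughly $e^{-K}$ while the gluing costs stay $O(1)$, giving a relative error of order $1/K$. This requires the rotational coupling to hold at scales well beyond the correlation length, with precision in log-probability rather than in probability. We would try to achieve this by iterating the coupling inside the mixing structure at scale $\zeta$, rotating each block and using quasi-multiplicativity to recombine the pieces. This is the step we expect to be hardest.
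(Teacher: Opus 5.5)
\paragraph{Main gap.} Your reduction from $\xi$ to $\zeta$ via convex duality is correct. The core of the argument, however, has a genuine gap, and it is exactly the step you flag as hardest.

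Everything you can import from $q=4$ gives only \emph{additive} closeness of probabilities of macroscopic events, at scales where the model still looks critical. This covers continuity of finite-volume measures in $q$ and the rotational coupling of \cite{DCKozKraManOulRotationalInvarianceCritPlanar}. The exponent $\zeta_{p_c,q}(\theta)^{-1}$, by contrast, is governed by events of probability $e^{-\Theta(K)}$ at scales $K\zeta_{p_c,q}$ with $K$ large. As you note, at scale $\asymp\zeta_{p_c,q}$ the $O(1)$ gluing costs swamp the main term.

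Your block scheme therefore needs $\log u_{\theta_1}(N)/\log u_{\theta_2}(N)$ to be close to $1$ at $N=K\zeta_{p_c,q}$, uniformly in $q$ near $4$. That is the theorem itself at a single scale, not a lemma towards it. Iterating the coupling block by block does not produce it:
\begin{itemize}
\item additive errors $\delta$ carry no information about probabilities of size $e^{-K}$;
\item RSW and quasi-multiplicativity, which you would use to recombine blocks, fail beyond the correlation length.
\end{itemize}
Your preliminary claim that RSW holds at $p_c(q)$, $q\searrow 4$, up to the correlation length is also asserted rather than established. The paper needs no large-scale estimate of this type for $q>4$.

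A useful warning sign: RSW below the correlation length, rotational invariance of the critical model, and FKG gluing are all available for near-critical FK percolation with $q\in[1,4]$, $p\nearrow p_c$. Yet isotropy of the correlation length is open there. The paper identifies the exactness of the track exchanges on the line $p=p_c(q)$ as the ingredient it cannot do without.

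\paragraph{How the paper avoids this.} The paper never compares rare-event probabilities in two directions directly. It first proves universality across isoradial lattices (Theorem~\ref{thm:universality}). It then uses that $\bbL(\alpha)$ is symmetric with respect to the axis $e^{i\alpha/2}\bbR$, so $\xi_{\alpha,q}(\theta_1)=\xi_{\alpha,q}(\theta_2)$ for $\alpha=\theta_1+\theta_2$. Comparing with $\alpha=\pi/2$ then gives isotropy.

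Universality comes from an exact coupling, valid for every $q$, of the free half-plane measures on $\bbL_+(\alpha)$ and $\bbL_+(\beta)$ through successive track exchanges. Half-plane measures are used because, for $q>4$, boundary conditions matter at infinite distance, and full-plane track exchanges are not known to preserve the free measure.

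Along the coupling one follows the extremal coordinate ${\rm E}(\sfC_t)$ of the cluster of the origin. Its increments are bounded and depend only on the configuration near the extremum. Conditionally on ${\rm E}(\sfC_t)$ being large, i.e.\ on the rare event itself, the environment of the extremum in a box of \emph{fixed} size is close in total variation to the $q=4$ half-plane IIC. This follows from continuity in $q$ plus mixing at $q=4$, and the IIC has zero drift. Extrema close to block interfaces are handled separately by an arm-exponent estimate.

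Optional stopping then yields $\bbP[{\rm E}(\sfC_{2N})\ge(1-\eta)n]\ge\eta\,\bbP[{\rm E}(\sfC_0)\ge n]$. This is a comparison up to a constant factor, which is precisely the relative precision in the exponent that your scheme lacks.
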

	
	Theorem \ref{thm:rotational_invariance} may also be expressed in terms of the \emph{Wulff shape}, the polar set of the inverse correlation length:
	\begin{align}\label{eq:wulff_def}
		\calW_q \coloneqq \bigcap_{\theta \in [0,2\pi)} \{x \in \bbR^2 : \langle x,e_\theta \rangle \leq \xi_{p_c,q}(\theta)^{-1}\}.
	\end{align}
	Then Theorem \ref{thm:rotational_invariance} implies the following result.
	\begin{cor}\label{cor: wulff limit}
		When $q \searrow 4$, $\calW_q/\sqrt{{\rm Vol}(\calW_q)}$ tends to the unit disk $\bbU = \{x \in \bbR^2: \abs{x} \leq 1\}$.
	\end{cor}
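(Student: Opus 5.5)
The corollary is a purely deterministic consequence of the first half of Theorem~\ref{thm:rotational_invariance}, the statement about $\xi_{p_c,q}$. The plan is to show that $\calW_q$ is squeezed between two concentric disks whose radii have ratio close to $1$, and then to normalise by the volume.

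\textbf{Step 1: the Wulff shape is squeezed between two disks.} For fixed $q>4$, set
\begin{align}
r_q \coloneqq \inf_\theta \xi_{p_c,q}(\theta)^{-1}, \qquad R_q \coloneqq \sup_\theta \xi_{p_c,q}(\theta)^{-1}.
\end{align}
These are finite and positive because $\xi_{p_c,q}$ is finite for $q>4$. The inverse correlation length is a norm, hence continuous in $\theta$ and bounded away from $0$ and $\infty$, so $0<r_q\le R_q<\infty$. Given $\eps>0$, Theorem~\ref{thm:rotational_invariance} yields $q_0>4$ such that $R_q/r_q\le 1+\eps$ for every $q\in(4,q_0]$. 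Indeed, the ratio $\xi(\theta_1)/\xi(\theta_2)$ is controlled uniformly in $\theta_1,\theta_2$, so taking the supremum and infimum over angles is harmless.

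\textbf{Step 2: inclusions.} From definition~\eqref{eq:wulff_def}, every half-plane in the intersection contains the disk $r_q\bbU$, since $\langle x,e_\theta\rangle\le |x|\le r_q\le \xi(\theta)^{-1}$ for $x\in r_q\bbU$. Hence $r_q\bbU\subset\calW_q$. For the reverse inclusion, take $x\in\calW_q$ and choose $\theta$ with $e_\theta=x/|x|$. Then $|x|=\langle x,e_\theta\rangle\le \xi(\theta)^{-1}\le R_q$, so $\calW_q\subset R_q\bbU$.

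\textbf{Step 3: normalise.} Step 2 gives $\pi r_q^2\le {\rm Vol}(\calW_q)\le \pi R_q^2$. Set $\lambda_q\coloneqq \sqrt{{\rm Vol}(\calW_q)/\pi}\in[r_q,R_q]$. Then
\begin{align}
\tfrac{1}{1+\eps}\,\bbU\;\subset\; \tfrac{r_q}{\lambda_q}\,\bbU\;\subset\; \calW_q/\lambda_q\;\subset\; \tfrac{R_q}{\lambda_q}\,\bbU\;\subset\;(1+\eps)\,\bbU.
\end{align}
So the Hausdorff distance between $\calW_q/\lambda_q$ and $\bbU$ is at most $\eps$. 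Since $\lambda_q$ equals $\sqrt{{\rm Vol}(\calW_q)}$ up to the fixed constant $\sqrt\pi$, this gives convergence to the unit disk under the normalisation that makes the volume equal to $\pi$. Read literally, the normalisation $\calW_q/\sqrt{{\rm Vol}(\calW_q)}$ in the statement makes the limit the disk of area $1$, namely $\pi^{-1/2}\bbU$; the intended meaning is presumably convergence in shape, up to this constant factor.

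\textbf{Main point requiring care.} Nothing here is difficult, since all the probabilistic content is already in Theorem~\ref{thm:rotational_invariance}. The only thing to check is that the theorem's bound is uniform over all pairs of angles, which is exactly how it is stated; the convex-duality relation~\eqref{eq:convex-duality} is not needed for this argument.
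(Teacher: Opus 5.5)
Your argument is correct and is exactly the deduction from the $\xi$-part of Theorem~\ref{thm:rotational_invariance} that the paper leaves implicit (it gives no separate proof): the uniform ratio bound squeezes $\calW_q$ between concentric disks of radii $r_q\le R_q\le(1+\eps)r_q$, and this yields Hausdorff convergence after normalisation. Your remark on the constant is also right: read literally, $\calW_q/\sqrt{{\rm Vol}(\calW_q)}$ has area $1$ and converges to $\pi^{-1/2}\bbU$, so the statement should be understood up to this factor, or with the normalisation $\sqrt{{\rm Vol}(\calW_q)/\pi}$.
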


	The Wulff shape describes the asymptotic shape of a cluster when conditioned to have a large volume. 
	Indeed, write $\sfC$ for the cluster of the origin and denote its cardinality by $\abs{\sfC}$. Then, for any $\eps > 0$,
	\begin{align}
		\phi_{\bbZ^2,p_c,q} \bigg[ {\rm d}_{\rm Hausdorff} \Big(\frac{1}{\sqrt{n}} \cdot \sfC, \frac{1}{\sqrt{{\rm Vol}(\calW_q)}} \cdot \calW_q\Big) > \eps \, \Big| \, \abs{\sfC} \geq n \bigg] \to 0
	\end{align}
	as $n \to \infty$, where $ {\rm d}_{\rm Hausdorff}$ denotes the Hausdorff distance \cite{CerfWulff}.
	Thus, Corollary~\ref{cor: wulff limit} states that, as $q \searrow 4$, the typical shape of a cluster conditioned to be large becomes round.
	
	\subsection{Strategy of the proof}
		
	Similar to the rotational invariance proved in \cite{DCKozKraManOulRotationalInvarianceCritPlanar}, our result is accompanied by a universality result relating random-cluster models on different isoradial graphs.
	To that end, we briefly describe an inhomogeneous FK-percolation model on some distorted embedding of the square lattice $\bbZ^2$. A more detailed discussion of this topic will be postponed until Section~\ref{sec: FK on isoradial graphs}.
	
	Fix $q > 4$ and $\alpha \in (0,\pi)$. Let $\bbL(\alpha)$ be the embedding of $\bbZ^2$ in which horizontal edges have length $2 \cos (\alpha/2)$, vertical edges have length $2 \sin (\alpha/2)$, and which is rotated by an angle of $\alpha/2$ --- see Figure~\ref{fig:isoradial_rectangular}. Define the inhomogeneous random-cluster model on finite subgraphs of $\bbL(\alpha)$ with cluster weight $q$ and edge-parameters $p_{\rm h}$ and $p_{\rm v}$ for the horizontal and vertical edges, respectively, given by
	\begin{align}
		\frac{p_{\rm h}}{1-p_{\rm h}} =
			 \frac{1-p_{\rm v}}{p_{\rm v}} = 
			 \sqrt{q} \frac{\sinh(r\alpha)}{\sinh(r(\pi-\alpha))}  \quad \text{with} \quad r \coloneqq \tfrac{1}{\pi} \cosh^{-1}\big(\frac{\sqrt{q}}{2}\big),
	\end{align}
	in the same way as the model on $\bbZ^2$; we also refer to \eqref{eq:def_iso_model} for a more general definition. 
	
	Write $\phi^0_{\bbL(\alpha),q}$ for the infinite-volume measure on $\bbL(\alpha)$ with the edge weights $p_{\rm h}$ and $p_{\rm v}$ as above and free boundary conditions. We call $\bbL(\alpha)$ an \emph{isoradial rectangular lattice} and $\phi_{\bbL(\alpha),q}^0$ its associated random-cluster model.
	
	Below, we will use estimates on lattices $\bbL(\alpha)$ for different values of $\alpha \in (0,\pi)$. 
	We will use the phrase ``uniform in $\alpha$ on compacts of $(0, \pi)$'' to mean that estimates are uniform on any interval of the type $(\eps, \pi-\eps)$ with $\eps > 0$, but potentially not on $(0,\pi)$. 
	All constants hereafter will be uniform in $\alpha$ on compacts of $(0, \pi)$. The main results may be shown to be uniform over the whole interval $(0,\pi)$, but this requires more care and is not essential for our purposes; for further details, see \cite[Sec.\ 5]{DCLiManoUniversality}.
	
	It was shown in \cite{DCLiManoUniversality} that, for any $q \geq 1$ fixed, the random-cluster models exhibit the same type of phase transition across all isoradial rectangular lattices $\bbL(\alpha)$. 
	In particular, as for the critical random-cluster model on $\bbZ^2$, the phase transition is discontinuous for $q > 4$. Thus, for any direction $\theta \in [0,2\pi)$ and any angle $\alpha \in (0,\pi)$, 
	the correlation length and the point-to-hyperplane decay rate may be defined and are bounded, uniformly in $\theta \in [0,2\pi)$ and $\alpha$ on compacts of $(0, \pi)$:
	\begin{align}
		\begin{aligned}
			\xi_{\alpha,q}(\theta) &=  \Big(\lim_{n \to \infty} - \tfrac{1}{n} \log \phi^0_{\bbL({\alpha}),q}\big[0 \lr \lfloor n e_\theta \rfloor\big]\Big)^{-1} <\infty  \quad\text{and}\\
			\zeta_{\alpha,q}(\theta) &= \Big(\lim_{n \to \infty} - \tfrac{1}{n}\log \phi^0_{\bbL(\alpha),q}\big[0 \lr \calH_{\geq n}^{\theta}\big]\Big)^{-1}<\infty.
			\label{eq:exponential_decay_L(alpha)}
		\end{aligned}
	\end{align}
	
	Moreover, while the convex duality in \cite{OttDecayRates} is stated for the square lattice $\bbZ^2$, the same proof still applies to isoradial rectangular lattices $\bbL(\alpha)$, i.e., the relation
	\begin{align}\label{eq:convex-duality_iso}
		\big(\xi_{\alpha,q}(\theta)\big)^{-1} = \sup_{\theta' \in [0,2\pi)} \big(\zeta_{\alpha,q}(\theta')\big)^{-1} \langle e_{\theta},e_{\theta'}\rangle.
	\end{align}
	still holds.
	Lastly, notice that for $\alpha = \tfrac\pi2$, one obtains a slight modification\footnote{Indeed, $\bbL(\pi/2)$ is the rotation by $\pi/4$ of $\sqrt 2 \bbZ^2$ and the measure $\phi^0_{\bbL(\alpha),q}$ is the free measure with $p = p_c(q)$ on this graph.} of $\xi_{p_c,q}$ and $\zeta_{p_c,q}$ as previously defined.
	
	The models $\phi^0_{\bbL(\alpha),q}$ for different values of $\alpha$ may be related by modifying the lattice step by step via the so-called \emph{star-triangle transformation}. Features that are stable under these transformations can then be transferred from one measure to the other. This strategy was used successfully for FK-percolation in \cite{GriManUniversalityBondPerco, DCLiManoUniversality, DCKozKraManOulRotationalInvarianceCritPlanar} to transfer RSW estimates and prove universality of the models for $q \in [1,4]$ and will be used here to obtain asymptotic universality for our quantities of interest.
	
	\begin{thm}\label{thm:universality}
		For all $\eps > 0$, there exists $q_0 > 4$ such that for $q \in (4, q_0]$, all $\alpha \in (\eps,\pi - \eps)$ and any $\theta \in [0,2\pi)$, it holds that
		\begin{align}
			\bigg|\frac{\xi_{\alpha,q}(\theta)}{\xi_{\pi/2,q}(\theta)}-1\bigg| < \eps 
			\quad \text{and} \quad
			\bigg|\frac{\zeta_{\alpha,q}(\theta)}{\zeta_{\pi/2,q}(\theta)}-1\bigg| < \eps.
		\end{align}
	\end{thm}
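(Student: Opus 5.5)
We transfer the decay rates from $\bbL(\pi/2)$ to $\bbL(\alpha)$ using the star-triangle coupling of \cite{DCLiManoUniversality, DCKozKraManOulRotationalInvarianceCritPlanar}. We do it first for the point-to-hyperplane rate $\zeta$. The statement for $\xi$ then follows from the convex duality \eqref{eq:convex-duality_iso}. Indeed, if $\zeta_{\alpha,q}(\theta')/\zeta_{\pi/2,q}(\theta')\in(1-\eps,1+\eps)$ for all $\theta'$, then taking the supremum in \eqref{eq:convex-duality_iso} gives the same bound, up to $O(\eps)$, for $\xi$. So it suffices to prove the $\zeta$ statement, uniformly in $\theta$ and in $\alpha\in(\eps,\pi-\eps)$.

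\textbf{Intermediate scale.} The key input is that as $q\searrow 4$, the model at $p_c(q)$ on $\bbL(\alpha)$ behaves critically (RSW-like) up to a scale $L(q)$, and $L(q)\to\infty$. This should come from continuity in $q$ of finite-volume crossing probabilities together with the RSW theory at $q=4$. We should moreover be able to choose $L(q)\ll \zeta_{\pi/2,q}$, uniformly in the direction, because $\zeta_{\pi/2,q}(\theta)\to\infty$ for every $\theta$ (it is infinite at $q=4$). The plan is to compare connection probabilities to a half-space at distance $n=k\ell$, with $\ell$ a mesoscopic scale satisfying $L\ll\ell\ll\zeta$.

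\textbf{Coupling step.} The track-exchange procedure of \cite{DCKozKraManOulRotationalInvarianceCritPlanar} transforms $\bbL(\pi/2)$ into $\bbL(\alpha)$ through a sequence of star-triangle moves. It is shown there that connection events at scale $N$ are preserved up to distortions that are small compared with $N$, with failure probability $o(1)$ under RSW. For $q$ slightly above $4$, RSW holds up to scale $L(q)$, and we apply this coupling in a box of size $\ell$. The outcome should be the following. If $0$ is connected to distance $m$ in direction $\theta$ in one lattice, then with probability at least $1-\delta$, in the other lattice $0$ is connected to a point at distance at least $(1-\delta)m$ in the same direction, where the error $\delta$ is made small by first taking $q$ close to $4$. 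We iterate this over $k$ blocks, gluing with FKG and finite energy/RSW at the junctions, at a multiplicative cost $e^{-c}$ per block. This yields
\[
\phi^0_{\bbL(\alpha),q}\big[0\lr\calH^\theta_{\ge (1-\delta)n}\big]\ \ge\ e^{-ck}\,\phi^0_{\bbL(\pi/2),q}\big[0\lr\calH^\theta_{\ge n}\big]^{1+\delta},
\]
and the symmetric inequality. Since $k=n/\ell$ and $\ell$ can be taken much smaller than $\zeta$ but with $1/\ell\ll 1/\zeta$ failing only negligibly, the $e^{-ck}$ term contributes $c/\ell\le \eps/\zeta$ to the rate provided $\ell\ge C\zeta/\eps$. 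This means the block scale must actually be comparable to $\zeta$, not below it. We therefore need the coupling to work at scale $\ell\asymp\zeta/\eps$, which is above the RSW scale.

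\textbf{Main obstacle.} The hardest part is controlling the coupling at scales comparable to the correlation length, where RSW fails. My first attempt would be to run the star-triangle coupling directly on the event $\{0\lr\calH_{\ge n}\}$, which is increasing, with free boundary conditions. The required input is that each star-triangle move displaces a path only locally, and that the number of moves affecting a given path is controlled by the length of the path rather than by RSW. The estimate on the total transverse drift would then come from a large-deviation or subadditivity bound, in which the drift per unit length tends to $0$ as $q\to4$. That last fact would be obtained by comparison with the $q=4$ behaviour at scale $L(q)$ inside each block.
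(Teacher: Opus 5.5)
Your reduction of $\xi$ to $\zeta$ through \eqref{eq:convex-duality_iso} matches the paper, but the core step is missing. The block-gluing scheme you write down fails, as you observe yourself: a cost $e^{-c}$ per block forces $\ell\gtrsim\zeta/\eps$, which is beyond any scale with RSW-type control. Your fallback (``drift per unit length tends to $0$'', via large deviations or subadditivity) is in the right spirit, but it states what is needed without a mechanism. Moreover, your coupling claim is conditional on the exponentially rare event $\{0\lr\calH^\theta_{\ge m}\}$. Unconditional $o(1)$ error bounds for the $q=4$ coupling say nothing under such a conditioning.

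The missing idea is to make everything local and conditional. The paper follows the extremal coordinate ${\rm E}_\theta(\sfC_t)$ of the origin's cluster along a sequence of lattices in which blocks of $N=Cn$ tracks of angles $\alpha$ and $\beta$ are swapped by $2N$ rounds of track exchanges. It bounds the \emph{expected} increment \emph{conditionally on the current rounded value} of ${\rm E}_\theta(\sfC_t)$ (Proposition~\ref{prop:conditional_drift}). Given ${\rm Ext}(\sfC_t)=z$, this increment is governed by the configuration in a box of fixed size around $z$. The relevant $q=4$ inputs (good flower domains, arm-exponent bounds near block interfaces, mixing towards the half-plane IIC) hold uniformly in the configuration outside that box. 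They therefore survive the conditioning and pass to $q$ close to $4$ by plain continuity of finite-volume measures, with no intermediate scale $L(q)$. The three regimes are handled as follows: deep in the mixed block the increment is that of the IIC, whose mean vanishes exactly by \eqref{eq:IIC drift is null}; in frozen blocks it is nonnegative; near interfaces the extremum is unlikely. A Markovisation and optional-stopping argument then gives $\bbP[{\rm E}(\sfC_{2N})\ge(1-\eta)n]\ge\eta\,\bbP[{\rm E}(\sfC_0)\ge n]$ (Corollary~\ref{cor: extremum barely moves}). This is only a constant-factor loss, invisible at exponential scale, and it uses a first-moment drift bound rather than a large-deviation estimate.

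You also do not address boundary conditions, which is a real obstruction for $q>4$. Infinite-volume measures are not unique, so it is unclear that track exchanges push $\phi^0_{\bbL(\pi/2),q}$ forward to $\phi^0_{\bbL(\alpha),q}$; writing ``with free boundary conditions'' does not settle this. The paper handles it in three steps:
\begin{enumerate}
\item It works with free half-plane measures, which are unique for periodic track sequences (Proposition~\ref{prop:hp_measure}) and hence compatible with track exchanges (Corollary~\ref{cor:track_exchange_hp}).
\item It replaces the block lattice by the homogeneous one for connections staying below $t_{Cn}$ (Corollary~\ref{cor:top_does_not_matter}).
\item It recovers the full-plane $\zeta$ from $\zeta^{\rm hp}$ via upper-half-plane aiming directions (Proposition~\ref{prop:hp_cor_len} together with the symmetry under rotation by $\pi$).
\end{enumerate}
Your proposal has no substitute for this step.
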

	
	We will focus on proving the statement for the point-to-hyperplane decay rate $\zeta$. 
	Indeed, this quantity is simpler to control under the local modifications we apply to the model. 
	The analogous statement for the correlation length $\xi$ will then be obtained via the convex duality \eqref{eq:convex-duality_iso}.
	
	We close this section by observing that Theorem~\ref{thm:universality} directly implies Theorem~\ref{thm:rotational_invariance}.
	
	\begin{proof}[Proof of Theorem~\ref{thm:rotational_invariance}]
		We prove the statement for the correlation length $\xi$ --- the analogous result for $\zeta$ is obtained in the same way.
		
		Fix $\eps > 0$ and $0 < \eps' < \frac{\eps}{2+\eps}$. Choose $q_0$ as in Theorem \ref{thm:universality} according to $\eps'$ and let $q \in (4,q_0]$. Fix $\theta_1 \in (0,\tfrac\pi4]$ and $\theta_2 \in [\tfrac\pi4,\tfrac\pi2]$, then set $\alpha = \theta_1+\theta_2 \in [\frac{\pi}4,\frac{3\pi}4]$. Observe that $e^{i\alpha/2} \bbR$ is an axis of symmetry of $\bbL(\alpha)$ and therefore, $\phi^0_{\bbL(\alpha),q}$ is invariant under orthogonal reflections with respect to said axis. In particular, we have $\xi_{\alpha,q}(\theta_1) = \xi_{\alpha,q}(\theta_2)$. Theorem \ref{thm:universality} then implies
		\begin{align}
			\frac{\xi_{\pi/2,q}(\theta_1)}{\xi_{\pi/2,q}(\theta_2)} 
			= \frac{\xi_{\pi/2,q}(\theta_1)/\xi_{\alpha,q}(\theta_1)}{\xi_{\pi/2,q}(\theta_2)/\xi_{\alpha,q}(\theta_2)} \in \bigg(\frac{1-\eps'}{1+\eps'},\frac{1+\eps'}{1-\eps'}\bigg).
		\end{align}
		In particular,
		\begin{align}
			\bigg|\frac{\xi_{\pi/2,q}(\theta_1)}{\xi_{\pi/2,q}(\theta_2)} - 1\bigg| < \frac{2\eps'}{1-\eps'}	<\eps.
		\end{align}
		
		The above extends to all angles $\theta_1, \theta_2 \in [0,2\pi)$ using the invariance of $\phi^0_{\bbL({\pi/2}),q}$ under reflections with respect to the horizontal, vertical, and diagonal axes. 
		Finally, note that $\xi_{p_c,q}(\theta) = \frac{1}{\sqrt 2}\xi_{\pi/2,q}(\theta + \frac{\pi}4)$. 
	\end{proof}
	
	\subsection{\texorpdfstring{Near-critical FK-percolation with $q \leq 4$}{Near-critical FK-percolation with q ≤ 4}}

	One may view the topic of the present paper in the context of near-critical FK-percolation on $\bbZ^2$.
	Most commonly, the near-critical regime refers to FK-percolation on $\bbZ^2$ with fixed $q \in [1,4]$ and $p\neq p_c$, at scales where the model transitions from a critical to an off-critical behaviour. 
	We direct the reader to \cite{DCManScalingRelations} for details
	and only mention here that the near-critical FK percolation with $1 \leq q \leq 4$ exhibits similar features as the critical one, most importantly \eqref{eq:RSW}. 

	It is expected that, as for the critical phase, the near-critical regime exhibits a form of asymptotic invariance under rotations. 
	In particular, we expect that, for any $1 \leq q \leq 4$ and any two angles $\theta_1,\theta_2$, 
	\begin{align}
		\frac{\xi_{p,q}(\theta_1) }{\xi_{p,q}(\theta_2)}  \to 1
		\quad \text{and} \quad 
		\frac{\zeta_{p,q}(\theta_1) }{\zeta_{p,q}(\theta_2)}  \to 1
		\qquad \text{ as $p\nearrow p_c(q)$}.
		\label{eq:conjectures_rot_inv_nc}
	\end{align}

	Theorem~\ref{thm:rotational_invariance} may be viewed as a manifestation of \eqref{eq:conjectures_rot_inv_nc} when the critical regime is approached along the line $q \searrow 4$ and $p  =p_c(q)$. 
	The authors have not managed to adapt the strategy below to also prove \eqref{eq:conjectures_rot_inv_nc} and we believe a key ingredient is missing. 
	Indeed, the {\em exact} validity of the star-triangle transformation (or its manifestation as track-exchanges; see Proposition~\ref{prop:track-exchange operator} below) is essential to the present argument. 
	These transformations are no longer exact when $p \neq p_c$, which we believe is a fundamental obstruction to adapting the argument. 

	It should be mentioned that \cite{DCWulff} proved \eqref{eq:conjectures_rot_inv_nc} for a variant of Bernoulli percolation (corresponding to $q=1$) through different means. 
	Indeed, this work builds on the existence and rotational invariance of the near-critical scaling limit \cite{GarPetSchNearCritScalingLimit}, which in turn relies on the description of the scaling limit of the critical phase \cite{SmiConformalInvariancePerco,CamNewScalingLimitPerco}.
	This approach has not yet been adapted to $q > 1$ and will likely require significant new ideas. 
	
	\subsection*{Organisation of the paper}
	Section~\ref{sec: FK on isoradial graphs} contains a review of some basic results for FK-percolation on isoradial graphs. In Section~\ref{sec: hp-measures}, we define half-plane measures and track-exchanges for such measures, as this framework is technically more convenient for our purposes. Finally, in Section~\ref{sec:universality}, we prove the main result, namely Theorem~\ref{thm:universality}.
	
	\section{Preliminaries}\label{sec: FK on isoradial graphs}
	
	In this section, we introduce isoradial graphs and the random-cluster models associated to them.
	
	\subsection{Isoradial graphs}
	
	A rhombic tiling $\bbG^\diamond$ is a tiling of the plane by rhombi of edge-length 1. Any such graph is bipartite and we can divide its vertices in two sets of non-adjacent vertices $\bbV_\bullet$ and $\bbV_\circ$. The \emph{isoradial graph} $\bbG$ associated to $\bbG^\diamond$ is the graph with vertex set $\bbV_\bullet$ and edge set given by the diagonals of the faces of $\bbG^\diamond$ between vertices of $\bbV_\bullet$. If the roles of $\bbV_\bullet$ and $\bbV_\circ$ are exchanged, we obtain the dual $\bbG^*$ of $\bbG$, which is also isoradial. The term isoradial refers to the fact that each face of $\bbG$ can be inscribed in a circle of radius 1. The rhombic tiling $\bbG^\diamond$ is called the \emph{diamond graph} of $\bbG$.
	
	A \emph{track} of $\bbG^\diamond$ is a bi-infinite sequence of adjacent faces $(r_i)_{i \in \bbZ}$ of $\bbG^\diamond$, such that each pair $r_i$ and $r_{i+1}$ shares an edge, and all such edges are parallel. The angle formed by any such edge with the horizontal axis is called the \emph{transverse angle} of the track. 
	
	The graphs considered in this paper are of a specific type: we assume that all faces of $\bbG^\diamond$ have horizontal top and bottom edges --- we call these {\em isoradial rectangular lattices}. As a result, the diamond graphs consist of two families of tracks: \emph{horizontal tracks} $t_i$ with transverse angles $\alpha_i \in (0,\pi)$, and \emph{vertical tracks} $s_j$, each with transverse angle 0. Each track of one family intersects all tracks of the other family, but no two tracks from the same family intersect.
	
	For a sequence of track angles $\bm \alpha = (\alpha_i)_{i \in \bbZ} \in (0,\pi)^{\bbZ}$, denote by $\bbL(\bm\alpha)$ the graph as above whose horizontal tracks have transverse angles $\alpha_i$ in increasing vertical order. When $\alpha_i = \alpha$ for every $i$, we simply write $\bbL(\alpha) = \bbL(\bm \alpha)$. 
	Note that $\mathbb{L}(\alpha)$ corresponds to a distorted embedding of the square lattice $\bbZ^2$,  with $e^{i \alpha/2} \bbR$ and $e^{i (\alpha+ \pi)/2} \bbR$ as axes of symmetry; see Figure~\ref{fig:isoradial_rectangular} for an illustration. 
	
	We will use the same notation for finite or half-infinite sequences $\bm\alpha$ to indicate lattices  $\bbL(\bm\alpha)$ covering a horizontal strip or half-plane.
	For technical reasons (specifically for the results of \cite{DCLiManoUniversality} to adapt readily), we will always work with sequences $\bm \alpha$ containing at most two values. 
	This restriction is not essential and may be removed by revisiting \cite{DCLiManoUniversality}.
	
	\begin{figure}
		\centering
		\small
		
	\def\svgwidth{.9\columnwidth}
\begingroup%
  \makeatletter%
  \providecommand\color[2][]{%
    \errmessage{(Inkscape) Color is used for the text in Inkscape, but the package 'color.sty' is not loaded}%
    \renewcommand\color[2][]{}%
  }%
  \providecommand\transparent[1]{%
    \errmessage{(Inkscape) Transparency is used (non-zero) for the text in Inkscape, but the package 'transparent.sty' is not loaded}%
    \renewcommand\transparent[1]{}%
  }%
  \providecommand\rotatebox[2]{#2}%
  \newcommand*\fsize{\dimexpr\f@size pt\relax}%
  \newcommand*\lineheight[1]{\fontsize{\fsize}{#1\fsize}\selectfont}%
  \ifx\svgwidth\undefined%
    \setlength{\unitlength}{313.3860911bp}%
    \ifx\svgscale\undefined%
      \relax%
    \else%
      \setlength{\unitlength}{\unitlength * \real{\svgscale}}%
    \fi%
  \else%
    \setlength{\unitlength}{\svgwidth}%
  \fi%
  \global\let\svgwidth\undefined%
  \global\let\svgscale\undefined%
  \makeatother%
  \begin{picture}(1,0.29693998)%
    \lineheight{1}%
    \setlength\tabcolsep{0pt}%
    \put(0,0){\includegraphics[width=\unitlength,page=1]{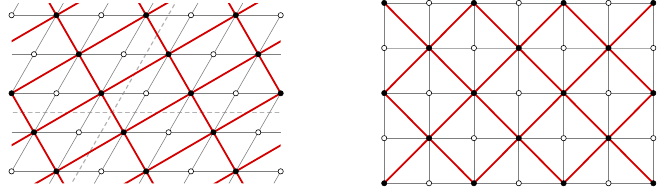}}%
    \put(0.1536281,0.25456263){\color[rgb]{0.50196078,0,0}\transparent{0.99848002}\makebox(0,0)[lt]{\lineheight{0}\smash{\begin{tabular}[t]{l}$p_{\rm h}$\end{tabular}}}}%
    \put(0.11058207,0.17808896){\color[rgb]{0.50196078,0,0}\transparent{0.99848002}\makebox(0,0)[lt]{\lineheight{0}\smash{\begin{tabular}[t]{l}$p_{\rm v}$\end{tabular}}}}%
    \put(-0.00075792,0.11738444){\color[rgb]{0,0,0}\transparent{0.99848002}\makebox(0,0)[lt]{\lineheight{0}\smash{\begin{tabular}[t]{l}$t_i$\end{tabular}}}}%
    \put(0.09848293,0.00593154){\color[rgb]{0,0,0}\transparent{0.99848002}\makebox(0,0)[lt]{\lineheight{0}\smash{\begin{tabular}[t]{l}$s_j$\end{tabular}}}}%
    \put(0,0){\includegraphics[width=\unitlength,page=2]{isoradial-lattices.pdf}}%
    \put(0.03197258,0.04415402){\color[rgb]{0,0,0}\transparent{0.99848002}\makebox(0,0)[lt]{\lineheight{0}\smash{\begin{tabular}[t]{l}$\alpha$\end{tabular}}}}%
  \end{picture}%
\endgroup%

		\caption{{\em Left:} An isoradial rectangular lattice $\bbL(\alpha)$. The underlying diamond graph is drawn in thin black lines, while the actual lattice is drawn in red. The white points denote vertices of the dual lattice. Two tracks --- one horizontal and one vertical --- are shown as dashed grey lines. Note the different parameters $p_{\rm v}$ and $p_{\rm h}$ for the two edge orientations.
		{\em Right:} For $\alpha = \pi/2$, the diamond graph is $\bbZ^2$ and the lattice $\bbL(\pi/2)$ is the rotation of $\sqrt{2} \bbZ^2$ by $\frac\pi4$. In this case, the model is homogenous, as $p_{\rm v} = p_{\rm h} = \sqrt{q}/(1+\sqrt{q})$.} 
		\label{fig:isoradial_rectangular}
	\end{figure}
	
	When considering isoradial graphs $\bbG=(\bbV,\bbE)$, we use the notation $\La_n= [-n,n]^2$ and identify it with the subgraph spanned by the vertices of $\bbV$ contained in $\La_n$. We write $\partial \La_n$ for the set of vertices $v \in \bbV \cap \La_n$ that have a neighbour in $\bbV \cap \La_n^c$. Furthermore, we write $\La_n(z)$ for the translation of $\La_n$ by $z \in \bbR^2$.
	
	\subsection{Definition and elementary properties of the isoradial random-cluster model}
	
	The isoradial embedding of a graph $\bbG$ produces different edge-weights --- called \emph{isoradial weights} --- for the edges of $\bbG$ as a function of their length. Indeed if $e$ is an edge of $\bbG$ and $\theta_e$ is the angle of the rhombus of $\bbG^\diamond$ containing $e$ and not bisected by $e$, we set
	
	\begin{align}
		p_e \coloneqq \begin{cases} \frac{\sqrt{q} \sin(r(\pi-\theta_e))}{\sin(r\theta_e)+\sqrt{q}\sin(r(\pi-\theta_e))}, &q < 4,\\ \frac{2\pi-2\theta_e}{2\pi-\theta_e}, &q=4,\\ \frac{\sqrt{q} \sinh(r(\pi-\theta_e))}{\sinh(r\theta_e)+\sqrt{q}\sinh(r(\pi-\theta_e))}, &q > 4, \end{cases} \quad \text{where} \quad r \coloneqq \begin{cases} \frac{1}{\pi} \cos^{-1}\left( \frac{\sqrt{q}}{2}\right), & q \leq 4,\\ \frac{1}{\pi} \cosh^{-1} \left(\frac{\sqrt{q}}{2} \right), &q >4. \end{cases}
	\end{align}

	For a finite subgraph $G=(V,E)$ of an isoradial graph $\bbG =(\bbV,\bbE)$, we define its vertex boundary $\partial G$ as the set of vertices in $V$ incident to a vertex in $\bbV \setminus V$. A \emph{boundary condition} $\xi$ on $G$ is then given by a partition of the set $\partial G$. We say that two vertices of $G$ are \emph{wired} if they belong to the same element of the partition $\xi$. When no boundary vertices are wired together, we obtain \emph{free} boundary conditions, which are denoted by 0.
	
	\begin{definition}
		For a finite subgraph $G = (V,E)$ of an isoradial graph $\bbG$, $q \geq 1$, and a boundary condition $\xi$, the \emph{random-cluster measure} on $G$ with cluster weight $q$ and boundary conditions $\xi$ is the measure $\phi^\xi_{G,q}$ on $\{0,1\}^E$ given by
		\begin{align}\label{eq:def_iso_model} 
			\phi^\xi_{G,q}[\om] = \frac{1}{Z^\xi_{G,q}} q^{k(\om^\xi)} \prod_{e \in E} p_e^{\om_e}(1-p_e)^{1-\om_e},
		\end{align}
		where the $p_e$ are the isoradial weights defined above, $k(\om^\xi)$ is the number of connected components in the graph $\om^\xi$ which is obtained from $\om$ by identifying vertices which are wired in $\xi$, and $Z^\xi_{G,q}$ is a normalising constant called the partition function.
	\end{definition}
	
	We will mostly consider the random-cluster model on infinite isoradial graphs $\bbG$ with free boundary conditions obtained by taking the limit of the measures with free boundary conditions on larger and larger finite graphs $G$ tending to $\bbG$. We write $\phi^0_{\bbG,q}$ for the measure in infinite volume that is obtained this way. Notice that the measures $\phi^0_{\bbL(\alpha),q}$ defined in the introduction are indeed those obtained by the isoradial weights above.
	
	Let us finally mention that the basic properties of the random-cluster model extend readily to this setting and we refer to \cite{GrimmettRandomCluster} for proofs in the homogenous setting. The following standard properties will be used repeatedly without mention.
	
	\paragraph{Monotonic properties.} Fix a finite subgraph $G = (V,E)$ of an isoradial graph $\bbG$, $q \geq 1$ and a boundary condition $\xi$. We say that an event $A$ is \emph{increasing} if for any $\om \leq \om'$\footnote{Here, $\om \leq \om'$ refers the partial ordering on $\{0,1\}^E$ given by $\om \leq \om'$ if $\om_e \leq \om'_e$ for every edge $e \in E$.}, $\om \in A$ implies $\om' \in A$. The \emph{FKG inequality} states that for any increasing events $A$ and $B$,
	\begin{align}\label{eq:FKG}
		\phi^\xi_{G,q}[A \cap B] \geq \phi^\xi_{G,q}[A] \phi^\xi_{G,q}[B]. \tag{FKG}
	\end{align}
	For two boundary conditions $\xi$ and $\xi'$ such that $\xi \leq \xi'$, meaning that any pair of vertices wired in $\xi$ is also wired in $\xi'$, the random-cluster model with boundary conditions $\xi$ is dominated by the one with boundary conditions $\xi'$, i.e., for any increasing event $A$,
	\begin{align}\label{eq:CBC}
		\phi^{\xi'}_{G,q}[A] \geq \phi^{\xi}_{G,q}[A]. \tag{CBC}
	\end{align}
	The latter inequality will be referred to as \emph{comparison between boundary conditions}.
	
	\paragraph{Spatial Markov property.} For $G$ as above, a subgraph $H$ of $G$, $q \geq 1$ and a boundary condition $\xi$ on $G$, we have
	\begin{align}\label{eq:SMP}
		\phi^\xi_{G,q}[ \om \text{ on } H \, | \, \om \text{ on } G \setminus H] = \phi^\zeta_{H,q}[\om \text{ on } H], \tag{SMP}
	\end{align}
	where $\zeta$ is the boundary condition induced by $\om^\xi$ on $G \setminus H$ --- that is, vertices of $\partial H$ are wired together in $\zeta$ if they are connected in $\omega^\xi$ in $G\setminus H$.
	
	A direct consequence of \eqref{eq:SMP} is the \emph{finite energy property}, meaning that the probability of an edge being open is bounded away from 0 and 1, uniformly in the state of all other edges. That is, for any edge $e$, there exists a constant $c_{\rm FE} > 0$ (only depending on the angle $\theta_e$) such that 
	\begin{align}\label{eq:FE}
		\phi^\xi_{G,q}[ \om_e = 1 \, | \, \om \text{ on } G \setminus \{e\}] \in [c_{\rm FE}, 1-c_{\rm FE}]. \tag{FE}
	\end{align}
	Moreover, when considering lattices $\bbL(\bm \alpha)$, $c_{\rm FE}$ is uniformly positive on compacts of $(0,\pi)$.
	
	\paragraph{The track-exchange operator.} 
	
	 Fix $q \geq 1$ and consider a finite sequence of angles  $\alpha_1,\dots, \alpha_n$ 
	 and let $\bbS$ denote the strip of an isoradial rectangular lattice with horizontal tracks $t_1,\dots, t_n$ with transverse angles   $\alpha_1,\dots, \alpha_n$
	 and vertical tracks of transverse angles $0$. 
	 Fix $ 1 < i \leq n$ and let $ \bbS'$ denote the strip obtained from the same sequence of angles, but with $\alpha_i$ and $\alpha_{i-1}$ exchanged. 
	 
	 The boundary vertices of $\bbS$ and $\bbS'$ are those below $t_1$ and above $t_n$; they are the same in the two strips. 
	 Let $\xi$ be a boundary condition for these two strips. 
	 The measures $\phi_{\bbS,q}^\xi$ and $\phi_{\bbS',q}^\xi$ may be defined as the unique limits of measures on $\bbS \cap\La_N$ as $N\to\infty$.
	 Their uniqueness is an immediate consequence of the finite energy property \eqref{eq:FE}. 
	 
	\begin{prop}\label{prop:track-exchange operator}
		There exists a random map $\bfT_i$ from percolation configurations on $\bbS$ to percolation configurations on $\bbS'$ such that
		\begin{itemize}
			\item if $\om \sim \phi^\xi_{\bbS,q}$, then $\bfT_i(\om) \sim \phi^\xi_{\bbS',q}$;
			\item $\om$ and $\bfT_i(\om)$ agree on all edges outside of $t_i$ and $t_{i-1}$;
			\item on edges of $t_i$ and $t_{i-1}$, $\bfT_i(\om)$ only depends on the values of $\omega$ on edges of $t_i$ and $t_{i-1}$ (and potentially on an additional source of randomness, independent of the rest of the configuration $\omega$);
			\item vertices outside of $t_i \cap t_{i-1}$ are connected in the same way in $\om$ and $\bfT_i(\om)$;
			\item the modification is local: there exists $c > 0$ such that, for $\om \sim \phi^\xi_{\bbS,q}$, 
			the state of $\bfT_i(\om)$ on any edge $e$ is determined\footnote{The transformation may use extra randomness, which may be encoded by i.i.d.\ uniform variables $U_e$ on $[0,1]$ for $e \in t_{i-1} \cup t_i$. The precise statement is that $(\bfT_i(\om))_e$ may be determined with probability exponentially close to $1$ from the knowledge of  $\omega$ and the variables $U_.$ on $\La_r(e)$.}
			by $\om$ restricted to $\La_r(e)$ with probability at least $1-e^{-cr}$.
		\end{itemize}
	\end{prop}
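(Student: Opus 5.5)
The approach is the classical construction of track exchanges through successive star-triangle transformations, as in \cite{GriManUniversalityBondPerco, DCLiManoUniversality, DCKozKraManOulRotationalInvarianceCritPlanar}. The key observation is that the exchange of the two adjacent horizontal tracks $t_{i-1}$ and $t_i$ can be obtained by inserting, at the far left of the strip, a degenerate intersection of the two tracks. This is a triangle that can be created through a single star-triangle move, together with a few boundary faces. That intersection is then slid rightwards one vertical track $s_j$ at a time. Each slide is one star-triangle transformation, which acts on the three rhombi formed by $t_{i-1}$, $t_i$ and $s_j$. The isoradial weights $p_e$ defined above are exactly the weights for which the star-triangle transformation is an exact identity for the random-cluster model; this is Kenyon's star-triangle condition in the $\sinh$ parametrisation when $q>4$. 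Consequently, each step admits a coupling between the configuration on a star and the configuration on the corresponding triangle. This coupling preserves the law given the outside configuration and preserves the connections between the three boundary vertices. It may use auxiliary uniform randomness.

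In finite volume, on $\bbS\cap\La_N$, I would carry out the following steps in order.
\begin{enumerate}
\item Insert the crossing on the left boundary.
\item Apply the successive star-triangle couplings $\sigma_j$ from left to right.
\item Remove the crossing on the right boundary.
\end{enumerate}
Because each elementary move maps the measure with boundary condition $\xi$ to the corresponding measure on the modified graph, the composition maps $\phi^\xi_{\bbS\cap\La_N,q}$ to $\phi^\xi_{\bbS'\cap\La_N,q}$. Moreover, the composition only touches edges of $t_{i-1}\cup t_i$, and each move uses only the current state of its own rhombi. Since every move preserves connectivity among the vertices it does not remove, connections between vertices outside $t_{i-1}\cap t_i$ are preserved. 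These properties give the first four items in finite volume, up to boundary effects at the two ends of the strip.

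The main obstacle is the passage to the bi-infinite strip, together with the locality statement. The sweep has no starting point at $-\infty$, so I would show that the output on an edge $e$ depends on the sweep started far to the left only through a short window. The sliding triangle carries a finite-state ``memory'' (the configuration on the triangle) from one step to the next, and this memory forms a Markov chain driven by $\omega$ and the uniforms $U_e$. My plan is to show that this chain couples, that is, forgets its initial state, at exponential rate. The tool is finite energy \eqref{eq:FE}, uniform on compacts of angles. With probability bounded below at each step, a local configuration of $\omega$ together with the uniforms forces the triangle into a fixed state, for instance all edges closed, regardless of the incoming memory. This yields a regeneration time with geometric tails. Started from two different left positions, the sweeps therefore agree on $e$ outside an event of probability at most $e^{-cr}$, where $r$ is the distance to $e$. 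Running the same argument leftwards for the right boundary shows that $\bfT_i$ is well defined as an almost-sure limit of the finite-volume maps. The limit pushes $\phi^\xi_{\bbS,q}$ to $\phi^\xi_{\bbS',q}$, because the finite-volume measures converge to these unique infinite-volume limits. It also satisfies the locality property with some $c>0$. The connectivity property passes to the limit because every connection is witnessed in a finite window.
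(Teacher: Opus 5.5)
Your proposal is correct and takes the route the paper points to: the paper gives no proof of its own, saying only that $\bfT_i$ may be obtained by composing star--triangle transformations and referring to \cite{DCKozKraManOulRotationalInvarianceCritPlanar} for details, and your construction (sliding a crossing of $t_{i-1}$ and $t_i$ along the strip, then making the bi-infinite sweep well defined and exponentially local via finite-energy regenerations of the carried edge state) is that construction. Two harmless precisions: what you insert at the left end is a single crossing rhombus of $t_{i-1}$ and $t_i$ rather than a triangle, and the memory-erasing pattern is ``both column edges closed'' when the hexagon carries a primal star but ``both column edges open'' when it carries a primal triangle, each occurring with conditional probability at least $c_{\rm FE}^2$ by \eqref{eq:FE}.
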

	
	The map $\bfT_i$ will be called the \emph{track-exchange operator}. The precise construction of the coupling between $\om$ and $\bfT_i(\om)$ is not essential, nor unique. One such coupling may be obtained by compositions of the star-triangle transformation; we refer the reader to \cite{DCKozKraManOulRotationalInvarianceCritPlanar} for details. We would merely like to remark that Proposition~\ref{prop:track-exchange operator} is an indication of the exact integrability of the model defined by the isoradial weights.
	We will also use $\bfT_i$ as a transformation of lattices, and write $\bbS' = \bfT_i(\bbS)$. 

	\subsection{\texorpdfstring{Isoradial random-cluster model with $q=4$}{Isoradial random-cluster model with q = 4}}\label{sec:background_q=4}
	
	Recall from \eqref{eq:exponential_decay_L(alpha)} that the random-cluster model on isoradial rectangular lattices with $q > 4$ shares the features of the model on $\bbZ^2$, 
	in particular its free infinite-volume measure exhibits exponential decay of connection probabilities. 
	Similarly, for $q=4$, the random-cluster measure on isoradial rectangular lattices behaves like the critical measure on $\bbZ^2$. We review its main characteristics here. 
	
	While these properties hold across the entire regime $q \in [1,4]$, our primary focus will be on the case $q=4$ and they will be stated as such. This specific choice is motivated by our ultimate goal to transfer some estimates from $q=4$ to the regime $q > 4$ with $q$ sufficiently close to $4$.
	
	In the following, let $\bbL$ be a lattice of the type $\bbL(\bm \alpha)$, where $\bm \alpha \in (0,\pi)^\bbZ$ is a sequence containing at most two values.	
	All constants below will be uniform in the values of $\bm \alpha$ on compacts of $(0,\pi)$. 
	It was proved in \cite{DCLiManoUniversality} that, for $q=4$, there exists a unique infinite-volume measure on $\bbL$, denoted by $\phi_{\bbL,4}$.
	
	\paragraph{Duality.} Recall that if $\bbG$ is an isoradial graph, then so is its dual $\bbG^*$. To each configuration $\om$ on $\bbG$, we associate a dual configuration $\om^*$ on $\bbG^*$ defined by taking $\om^*_{e^*} = 1-\om_e$ for all edges $e$ of $\bbG$, where $e^*$ is the unique edge of $\bbG^*$ intersecting $e$. The uniqueness of the infinite-volume measure on $\bbL$ implies that the dual of the isoradial measure on $\bbL$ is the isoradial measure on the dual lattice $\bbL^*$, i.e., if $\om \sim \phi_{\bbL,4}$, then $\om^* \sim \phi_{\bbL^*,4}$.
	
	\paragraph{RSW property.} Arguably the most useful ingredient in the study of critical planar percolation models is the \emph{Russo--Seymour--Welsh (RSW)} estimate. It states that the probabilities of crossing rectangles of a given aspect ratio but arbitrary scale are uniformly bounded away from $0$ and $1$. Moreover, these bounds are uniform, even when boundary conditions are imposed at a macroscopic distance from the rectangle. More precisely, for $\rho, \eps > 0$, there exists $c = c(\rho, \eps) > 0$ such that for any event $A$ depending on the edges at distance at least $\eps n$ from the rectangle $[0,\rho n] \times [0,n]$,
	\begin{align}\label{eq:RSW}
		c \leq \phi_{\bbL,4}\big[\scrC([0, \rho n] \times [0,n]) \,|\, A \big] \leq 1-c, \tag{RSW}
	\end{align}
	where $\scrC([0,\rho n] \times [0,n])$ denotes the event that there exists a path of open edges in $[0, \rho n] \times [0,n]$ from $\{0\} \times [0,n]$ to $\{\rho n\} \times [0,n]$. We refer to \cite[Thm.\ 1.1]{DCLiManoUniversality} for a full proof.
	
	We now discuss some consequences of \eqref{eq:RSW}. 
	
	\paragraph{Mixing.} One consequence we will use repeatedly is the so-called mixing property. For every $\eps > 0$, there exist $c_{\rm mix}, C_{\rm mix} \in (0,\infty)$ such that for every $r \leq R/2$, every event $A$ depending on edges in $\La_r$, and every event $B$ depending on edges outside $\La_R$, we have that
	\begin{align}\label{eq:mixing}
		\big|\phi_{\bbL,4}[A \cap B]- \phi_{\bbL,4}[A]\phi_{\bbL,4}[B]\big| \leq C_{\rm mix} \big(\tfrac{r}{R}\big)^{c_{\rm mix}} \phi_{\bbL,4}[A]\phi_{\bbL,4}[B]. \tag{Mix}
	\end{align}
	
	This property can be derived from \eqref{eq:RSW} using the same arguments as in the homogeneous model, see e.g.\ \cite[Cor.\ 2.10]{DCManScalingRelations}.
	
	\paragraph{Arm events and flower domains.}
	
	Fix some angle $\theta \in [0,2\pi)$ and define the following arm events. For $r \leq R$ and $z \in \bbL$, let $A_3^{{\rm hp}(\theta)} (z; r,R)$ be the event that there exist three non-intersecting paths 
	$\gamma_1,\gamma_2,\gamma_3$ in the annulus $\La_R(z) \setminus \La_r(z)$, between $\partial \La_r(z)$ and $\partial \La_R(z)$, 
	all contained in the half-space $\langle \cdot,e_{\theta} \rangle \leq \langle z ,e_{\theta} \rangle$, arranged in clockwise order and such that $\gamma_1,\gamma_3 \in \omega^*$ and $\gamma_2 \in \omega$. 
	We call this the three-arm event in the half-plane orthogonal to $e_\theta$ and call the paths $\gamma_1,\gamma_2,\gamma_3$ arms.
	
	We will now briefly introduce the notion of \emph{flower domains}. These domains are particularly suitable for studying arm events. We refer the reader to \cite{DCManScalingRelations} for a more comprehensive treatment.
	
	Given $R \geq 1$ and a configuration $\om$, let $\calE$ be the union of $\La_{2R}^c$ and all the primal/dual interfaces starting on $\partial \La_{2R}$ explored inwards until they either exit $\La_{2R}$ or enter $\La_R$. Let $\calF$ be the connected component of the origin in $\calE^c$. The boundary of $\calF$ is formed either of a primal or dual circuit, or of an even number of alternating primal and dual arcs called \emph{petals}. We call $\calF$ the flower domain revealed from $\La_{2R}$ to $\La_{R}$.
	See Figure~\ref{fig: good-scale} for an example. 
	
	For $\eta >0$, the flower domain $\calF$ is said to be $\eta$-\emph{well-separated} if the endpoints of its petals are at a distance of at least $\eta R$ from each other.
	In particular, a flower domain with a single petal is $\eta$-well-separated by default.

	It was proved in~\cite[Lemma 3.2]{DCManScalingRelations} that
	the flower domain from $\La_{2R}$ to $\La_{R}$ is well-separated with positive probability under $\phi^\xi_{\bbL \cap \La_{2R}}$ for any boundary condition $\xi$. 
	This can be extended to accommodate measures conditioned on arm events \cite{GasManMohArmSeparation}. 
	
	Let us give a more precise statement in the case of the three-arm event in the half plane.
	We say that a flower domain is \emph{good} if it has exactly two petals (one primal and one dual), is $\frac14$-well-separated, and its primal petal is contained in the cone with apex $0$, bisector $-e_\theta$, and aperture $\pi/2$ (see Figure~\ref{fig: good-scale}).
	
	\begin{figure}
		\centering
		\includegraphics[width = 0.75\textwidth]{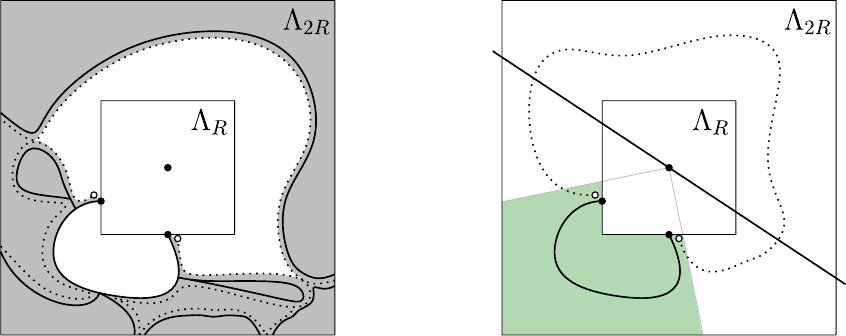}
		\caption{{\em Left:} the flower domain from $\La_{2R}$ to $\La_R$ is obtained by exploring all primal--dual interfaces starting on $\partial \La_{2R}$ until they exit $\La_{2R}$ or reach $\La_R$. The explored region is grey and the flower domain is the white domain; its boundary is formed of one primal and one dual petal.  
		{\em Right:} a good flower domain with the unique primal petal contained in the green region.}
		\label{fig: good-scale}
	\end{figure}
	
	\begin{lem}[\cite{GasManMohArmSeparation}]\label{lem:good flower domain}
		There exists $c > 0$ such that for every $4r \leq R$, any $z \in \bbL$, and any configuration $\om_0$ on $\La_{4r}(z)^c$ that allows for the occurrence of $A_3^{{\rm hp}(\theta)}(z;1,R)$, we have
		\begin{align}
			\phi_{\bbL,4}\big[\calF\text{ is good}\,|\, \om = \om_0 \text{ on } \La_{4r}(z)^c \text{ and } A_3^{{\rm hp}(\theta)}(z;1,R)\big] \geq c,
		\end{align}
		where $\calF$ is the flower domain revealed from $\La_{2r}(z)$ to $\La_{r}(z)$.
	\end{lem}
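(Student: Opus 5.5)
We prove Lemma~\ref{lem:good flower domain} by combining the arm-separation machinery of \cite{DCManScalingRelations,GasManMohArmSeparation} with \eqref{eq:RSW} and the spatial Markov property. First condition on $\om_0$ outside $\La_{4r}(z)$; by \eqref{eq:SMP} the law inside $\La_{4r}(z)$ is a random-cluster measure with some boundary condition $\xi$ induced by $\om_0$. The conditioning on $A_3^{{\rm hp}(\theta)}(z;1,R)$ then reduces to conditioning on the event that the three arms (dual, primal, dual, in the half-plane) are realised between $\partial\La_1(z)$ and $\partial\La_{4r}(z)$ and that they connect to the arms already present in $\om_0$. We therefore work under $\phi^\xi_{\La_{4r}(z)}[\,\cdot\,|\,\tilde A]$, where $\tilde A$ is this inner arm event, with the specified landing arcs on $\partial\La_{4r}(z)$. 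The goal is a lower bound, uniform in $\xi$ and in those arcs, on the probability that the flower domain revealed from $\La_{2r}(z)$ to $\La_r(z)$ is good.

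Next, explore the flower domain $\calF_{\rm out}$ from $\La_{4r}(z)$ to $\La_{2r}(z)$. The arm-separation result of \cite{GasManMohArmSeparation} should give that, under the conditioning, $\calF_{\rm out}$ is well-separated with three petals in the correct order (dual, primal, dual), or an appropriate degenerate configuration of them, with probability at least $c_1$. Here ``well-separated'' means $\eta$-separated for a fixed $\eta$, uniformly in $\xi$ and $\om_0$. This step is a direct invocation of the cited separation lemma for half-plane three-arm events.

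On the event that $\calF_{\rm out}$ is well-separated, the configuration inside is a random-cluster measure on $\calF_{\rm out}$ with the petal boundary conditions. We then construct an explicit event of probability at least $c_2$ which forces the inner flower domain $\calF$ (from $\La_{2r}(z)$ to $\La_r(z)$) to be good while preserving $\tilde A$. The construction has three parts:
\begin{itemize}
\item a primal path from the primal petal of $\calF_{\rm out}$ into the cone of apex $z$, bisector $-e_\theta$ and aperture $\pi/2$, reaching $\La_r(z)$;
\item dual paths on both sides of it, merging into a single dual arc around the remaining part of the annulus $\La_{2r}(z)\setminus\La_r(z)$;
\item arrangements ensuring that the interface exploration stops at exactly two endpoints, at distance at least $r/4$ apart.
\end{itemize}
Each part is a bounded number of crossings of rectangles of bounded aspect ratio at scale $r$. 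By \eqref{eq:RSW}, which holds uniformly over boundary conditions placed at macroscopic distance, together with \eqref{eq:FKG} applied separately to the increasing primal and decreasing dual pieces (after conditioning via \eqref{eq:SMP} in disjoint regions), these crossings occur with positive probability. The inner arms from $\La_1(z)$ to $\La_r(z)$ are then glued to the petals of $\calF$.

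To undo the conditioning, we need that $\phi[\tilde A \cap \{\calF \text{ good}\}]\geq c\,\phi[\tilde A]$. Write $\phi[\tilde A]$ via the quasi-multiplicativity of arm events: $\phi[\tilde A]\asymp \phi[A(1,r)\,|\,\calF\text{ good}]\cdot\phi[A(2r,4r)\text{ well-separated}]$. This is itself a consequence of separation and \eqref{eq:RSW}, and it reduces the claim to the two positive-probability estimates above.

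The main obstacle is the separation step: establishing uniformity in $\om_0$, which can be arbitrary and adversarial near $\partial\La_{4r}(z)$, and in the arm landing points. In practice we rely entirely on \cite{GasManMohArmSeparation} for this, and only verify that its hypotheses, namely \eqref{eq:RSW} uniformly in boundary conditions, hold for $\phi_{\bbL,4}$ on $\bbL(\bm\alpha)$ uniformly for $\bm\alpha$ on compacts of $(0,\pi)$, which is \cite{DCLiManoUniversality}.
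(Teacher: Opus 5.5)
\paragraph{Gap: the load-bearing steps are assumed.}
The paper does not prove this lemma; it imports it from \cite{GasManMohArmSeparation}. Your proposal also rests on that reference, but precisely at the steps that carry the content, so it is not a proof.

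The separation you invoke for the flower domain $\calF_{\rm out}$ revealed from $\La_{4r}(z)$ to $\La_{2r}(z)$ has to hold uniformly in $\om_0$ and under the conditioning on $A_3^{{\rm hp}(\theta)}(z;1,R)$. That is a statement of the same type as the lemma itself. Since $\om_0$ is fixed right up to $\partial\La_{4r}(z)$, it even lacks the factor-$2$ buffer that the lemma has.

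The step where you undo the conditioning has the same problem. \eqref{eq:quasi_multiplicativity} concerns the full-plane measure $\phi_{\bbL,4}$, not the measure with boundary conditions induced by an arbitrary $\om_0$. The bound you need, $\phi[\tilde A]\le C\,\phi[\tilde A\cap\{\calF\text{ good}\}]$, requires two separation inputs that \eqref{eq:RSW} alone does not give:
\begin{itemize}
\item an outer one: arms in $\La_{4r}(z)\setminus\La_{2r}(z)$ compatible with $\om_0$ can, at the cost of a constant factor, be required to land on well-separated arcs of $\partial\La_{2r}(z)$;
\item an inner one: inside any good $\calF$, the three half-plane arms from $\La_1(z)$ to the appropriate petals have probability at least $c\,\phi_{\bbL,4}[A_3^{{\rm hp}(\theta)}(z;1,r)]$.
\end{itemize}
The inner input is exactly what your sentence about gluing the inner arms to the petals of $\calF$ silently uses.

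\paragraph{Problems in the upgrade step.}
What you do supply is the upgrade from ``well-separated'' to ``good'', and as written it has concrete defects.

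First, a flower domain has an even number of alternating petals, so ``three petals (dual, primal, dual)'' cannot occur. Either both dual arms exit through a single dual petal, or $\calF_{\rm out}$ has at least four petals. In the latter case your construction must also join the two dual petals by a dual path that cuts off the extra primal petal(s) from $\La_{2r}(z)$. It must keep the primal petal that is compatible with $\om_0$ and with the clockwise order of the arms.

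Second, $\eta$-well-separation as defined in the paper only controls the mutual distances of petal endpoints, not their distance to the line $\langle x-z,e_\theta\rangle=0$. If the primal petal used by $\gamma_2$ runs close to this line, the dual arm on that side must squeeze between them. Your RSW construction then has no uniform lower bound in general: it must steer the primal arm into the cone of aperture $\pi/2$ while keeping both dual arms in the half-plane. The outer separation you import must therefore be adapted to the half-plane, with landing regions at distance $\geq \eta r$ from each other and from that line.

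Third, reducing the conditioning to arms ``with specified landing arcs on $\partial\La_{4r}(z)$'' is inaccurate. The arms may leave and re-enter $\La_{4r}(z)$ and may use many boundary clusters of $\om_0$. The clean formulation is to condition on the exploration of $\calF_{\rm out}$, after which the event becomes an arm event inside $\calF_{\rm out}$ towards designated petals.
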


	\paragraph{Arm exponents.}
	Another classical consequence of \eqref{eq:RSW} is that the probabilities of arm events may be bounded by polynomials with strictly positive exponents.
	
	The following arm exponent bounds will be useful in our arguments. 
	We call $\bbL = \bbL(\bm\alpha)$ \emph{periodic} if $\bm\alpha$ is a periodic sequence.
	
	\begin{prop}\label{prop:arm_exp}
		There exist constants $c,C > 0$ independent of $\theta$ and $\bbL$ such that, for all $1 \leq r<R$ and $z \in \bbL$, 
		\begin{align}
			\tfrac{1}{C}\big(\tfrac{r}{R}\big)^2 
			\leq \phi_{\bbL,4} \big[A_3^{{\rm hp}(\theta)} (z; r,R)\big]   
			&\leq C \big(\tfrac{r}{R}\big)^2,  &&\text{ if $\bbL$ is periodic, and}\label{eq:periodic 3-arm}\\
			\phi_{\bbL,4} \big[A_3^{{\rm hp}(\theta)} (z; r,R)\big]  &\leq C \big(\tfrac{r}{R}\big)^{1+c} &&\text{ for all $\bbL$ as above.}\label{eq:generic 3-arm}
		\end{align}
		In the first case, $C$ may depend on $\bbL$, but is uniform for sequences $\bm\alpha$ of period $2$ with values in compacts of $(0,\pi)$. 
		
		Moreover, the arm event probabilities satisfy a \emph{quasi-multiplicativity} property, i.e., for any $r \leq \rho \leq R$, we have
		\begin{align}\label{eq:quasi_multiplicativity}
			\tfrac{1}C 
			\leq \frac{\phi_{\bbL,4} \big[A_3^{{\rm hp}(\theta)} (z; r,R)\big]}{ \phi_{\bbL,4} \big[A_3^{{\rm hp}(\theta)} (z; r,\rho)\big]\phi_{\bbL,4} \big[A_3^{{\rm hp}(\theta)} (z; \rho,R)\big] } 
			\leq C.
		\end{align}
	\end{prop}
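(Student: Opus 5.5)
We prove the three statements of Proposition~\ref{prop:arm_exp} separately, each by standard consequences of \eqref{eq:RSW}, \eqref{eq:mixing} and Lemma~\ref{lem:good flower domain}, in the order: quasi-multiplicativity, the generic upper bound \eqref{eq:generic 3-arm}, and finally the periodic estimate \eqref{eq:periodic 3-arm}.

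\textbf{Quasi-multiplicativity.} The upper bound in \eqref{eq:quasi_multiplicativity} follows from $A_3^{{\rm hp}(\theta)}(z;r,R)\subset A_3^{{\rm hp}(\theta)}(z;r,\rho)\cap A_3^{{\rm hp}(\theta)}(z;2\rho,R)$ together with \eqref{eq:mixing}, and the comparison $\phi[A_3(z;\rho,R)]\asymp\phi[A_3(z;2\rho,R)]$.

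For the lower bound, we condition on the configuration outside $\La_{4\rho}(z)$ realising the outer arms. Lemma~\ref{lem:good flower domain} then says that the flower domain from $\La_{2\rho}(z)$ to $\La_\rho(z)$ is good with probability at least $c$. Symmetrically, we explore from the inside: the inner arms $A_3(z;r,\rho/4)$ can be taken well-separated at scale $\rho/4$. The inner and outer configurations are then glued in the annulus using \eqref{eq:RSW} in a bounded number of rectangles, with the gluing event decoupled from the rest by \eqref{eq:SMP} and monotonicity. The half-plane constraint is handled because good flower domains have their primal petal in a prescribed cone. This is the standard argument of \cite{DCManScalingRelations}.

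\textbf{Generic bound \eqref{eq:generic 3-arm}.} The plan is to show that the half-plane three-arm event is strictly rarer than $(r/R)^{1}$.
\begin{itemize}
\item First we would establish a bound of order $r/R$. On the event $A_3^{{\rm hp}(\theta)}(z;r,R)$, the primal arm is the lowest point, in direction $e_\theta$, of a cluster crossing the annulus. Summing over the $R/r$ disjoint translates along the hyperplane, at most $O(1)$ such extremal points can occur with positive probability, by \eqref{eq:RSW}. Together with quasi-multiplicativity this gives $\phi[A_3^{\rm hp}]\le C r/R$.
\item The extra exponent $c$ then comes from a fourth arm: at each scale, \eqref{eq:RSW} and a good flower domain let us create, with uniformly positive conditional probability, an independent obstruction. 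This forces an additional constant-factor loss across $\log_2(R/r)$ scales, yielding $(r/R)^{c}$ on top.
\end{itemize}

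\textbf{Periodic case \eqref{eq:periodic 3-arm}.} The exponent $2$ is the universal half-plane three-arm exponent. We would prove it by an averaging argument. Consider the box $\La_R$ and count the lowest points, in direction $e_\theta$, of clusters crossing from the hyperplane region to distance $R$, at mesoscopic scale $r$. Each such point realises $A_3^{\rm hp}(\cdot;r,R)$ up to constants. Their expected number is of order $(R/r)^2\,\phi[A_3^{\rm hp}(z;r,R)]$ by periodicity: translation invariance over a period makes the probability comparable at all $z$. The number of such points is, by \eqref{eq:RSW} and \eqref{eq:mixing}, of order $1$ in expectation, with a positive-probability lower bound for at least one. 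Quasi-multiplicativity converts this into both bounds.

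\textbf{Main obstacle.} The hardest step is this counting in the periodic case: showing that the expected number of extremal cluster points in a box is bounded above and below by constants. That requires a uniform RSW control of crossings in all directions $\theta$ on the possibly anisotropic lattice.
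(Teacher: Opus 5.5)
\textbf{The generic bound \eqref{eq:generic 3-arm} has a genuine gap, at two places.} The paper does not prove this proposition itself. It cites \cite[Prop.~3.4]{DCKozKraManOulRotationalInvarianceCritPlanar} for the exponents and \cite{CheDCHonCrossingProbaFKIsing,GasManMohArmSeparation} for quasi-multiplicativity, remarking only that the exponent $2$ needs invariance under two independent translations, and that without it one only gets \eqref{eq:generic 3-arm}. Your quasi-multiplicativity sketch and your periodic counting argument follow the standard route. In the periodic case, extremal $r$-boxes of large clusters in $\La_R$ have expected number $\asymp 1$, and double periodicity makes all positions comparable.

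The first problem is the choice of translates. A non-periodic $\bbL(\bm\alpha)$ is invariant only under horizontal translations. So your $R/r$ translates of $z$ along the line $\{\langle x,e_\theta\rangle=\langle z,e_\theta\rangle\}$ are not equivalent unless that line is horizontal. An $O(1)$ bound on the sum then controls only the average of the arm probabilities along that line, not the value at $z$. You have to sum over horizontal translates instead, i.e.\ count $e_\theta$-extrema of large clusters lying in a horizontal strip of height $r$. This still gives $(R/r)\,\phi_{\bbL,4}[A_3^{{\rm hp}(\theta)}(z;r,R)]\le C$, since each cluster has a single extremum.

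The second, more serious problem is your mechanism for the extra factor $(r/R)^c$. Requiring a fourth arm can only shrink the event, so it cannot yield an upper bound on a three-arm probability. Nor can a scale-by-scale loss be multiplied onto the counting bound $Cr/R$: both are upper bounds on the same probability, and the counting bound already reflects whatever happens at each scale. The improvement must enter the counting itself. You need the expected number of clusters of diameter at least $R$ meeting $\La_R$ whose $e_\theta$-extremum lies in a fixed horizontal strip $S_r$ of height $r$ to be at most $C(r/R)^c$, rather than $O(1)$. In other words, the extremum of a large cluster must not concentrate on thin strips.

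This is where RSW and arm separation are used scale by scale. Take nested strips $S_\rho\subset S_{3\rho}$ with the same centre line. For each $\rho$-box $B\subset S_\rho$, explore a good flower domain at scale $\rho$ around $B$ (Lemma~\ref{lem:good flower domain}). Inside it, extend the cluster in a direction having positive scalar product with $e_\theta$. This shows that, up to a bounded factor, the probability that $B$ contains the extremum is at most the probability that a nearby box in $S_{3\rho}\setminus S_\rho$ does. Hence the expected count for $S_\rho$ is at most $\lambda$ times that for $S_{3\rho}$, with $\lambda<1$ uniform. Iterating over $\log_3(R/r)$ scales gives $(r/R)^c$, and combining this with the horizontal counting yields $\phi_{\bbL,4}[A_3^{{\rm hp}(\theta)}(z;r,R)]\le C(r/R)^{1+c}$. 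Without such a non-concentration statement, your argument stops at exponent $1$.
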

	
	We refer to \cite[Prop.\ 3.4]{DCKozKraManOulRotationalInvarianceCritPlanar} for a proof of \eqref{eq:periodic 3-arm}, \eqref{eq:generic 3-arm} and merely note that they are consequences of \eqref{eq:RSW} and certain symmetries of the lattice. Note that \eqref{eq:periodic 3-arm} requires the invariance of $\bbL$ under two independent translations, without which, only the weaker bound~\eqref{eq:generic 3-arm} may be obtained\footnote{Using the universality result of \cite{DCKozKraManOulRotationalInvarianceCritPlanar}, \eqref{eq:periodic 3-arm} may be extended to all $\bbL$ as above, but this requires additional work and is not necessary.}. The quasi-multiplicativity \eqref{eq:quasi_multiplicativity} is a standard consequence of \eqref{eq:RSW} and the arm-separation property; see \cite{CheDCHonCrossingProbaFKIsing,GasManMohArmSeparation} for details.

	\subsection{Incipient infinite cluster in the half-plane}\label{sec: IIC}
	
	In this section, we introduce the \emph{incipient infinite cluster} (henceforth IIC) measures with three arms in the half-plane.
	Fix two angles $\alpha,\beta \in (0,\pi)$ and write $\bbL_{\rm mix}$ for the lattice of type $\bbL(\bm \alpha)$ with $\bm \alpha \in (0,\pi)^\bbZ$ alternating between $\alpha$ and $\beta$, i.e., $\bm \alpha = (\alpha_i)_{i \in \bbZ}$ with $\alpha_i = \beta$ for $i$ even and $\alpha_i = \alpha$ for $i$ odd. Note that the \eqref{eq:RSW} property applies to $\bbL_{\rm mix}$.
	
	Recall that the horizontal tracks of $\bbL_{\rm mix}$ are denoted by $(t_k)_{k \in \bbZ}$ and the vertical ones by $(s_k)_{k \in \bbZ}$. We assume the vertex between $t_0$ and $t_1$ and between $s_0$ and $s_1$ to be a primal one and consider it to be the origin of $\bbR^2$. For integers $i,j$, define the \emph{cell} $(i,j)$ as the set of primal and dual vertices lying between the vertical tracks $s_{2i-1}$ and $s_{2i+1}$ and the horizontal tracks $t_{2j-1}$ and $t_{2j+1}$. Note that the cells are centred around rhombi of angle $\beta$. To each cell, we associate its lower-left lattice point, which, by our convention, is a primal vertex --- see Figure~\ref{fig: extremal cell}.
	
	Fix $\theta \in [0,2\pi)$. For a finite cluster $\sfC$ of a configuration $\om$ on $\bbL_{\rm mix}$, 
	let ${\rm Ext}_\theta(\sfC)$ be the lattice point that maximises the scalar product with $e_\theta$ and whose associated cell intersects $\sfC$. 
	If multiple such maximisers exist, choose the one with the largest vertical coordinate. 
	We call ${\rm Ext}_\theta(\sfC)$ the {\em extremum in direction $e_\theta$} of $\sfC$ --- note that it is possible for ${\rm Ext}_\theta(\sfC)$ to not be part of $\sfC$.
	Finally, let 
	\begin{align} 
		{\rm E}_\theta(\sfC) = \langle {\rm Ext}_\theta(\sfC), e_\theta \rangle
	\end{align}
	be the corresponding extremal coordinate of $\sfC$ in direction $e_\theta$.  See Figure~\ref{fig: extremal cell}.
	
	\begin{figure}
		\centering
		
	\def\svgwidth{.7\columnwidth}
\begingroup%
  \makeatletter%
  \providecommand\color[2][]{%
    \errmessage{(Inkscape) Color is used for the text in Inkscape, but the package 'color.sty' is not loaded}%
    \renewcommand\color[2][]{}%
  }%
  \providecommand\transparent[1]{%
    \errmessage{(Inkscape) Transparency is used (non-zero) for the text in Inkscape, but the package 'transparent.sty' is not loaded}%
    \renewcommand\transparent[1]{}%
  }%
  \providecommand\rotatebox[2]{#2}%
  \newcommand*\fsize{\dimexpr\f@size pt\relax}%
  \newcommand*\lineheight[1]{\fontsize{\fsize}{#1\fsize}\selectfont}%
  \ifx\svgwidth\undefined%
    \setlength{\unitlength}{299.78856365bp}%
    \ifx\svgscale\undefined%
      \relax%
    \else%
      \setlength{\unitlength}{\unitlength * \real{\svgscale}}%
    \fi%
  \else%
    \setlength{\unitlength}{\svgwidth}%
  \fi%
  \global\let\svgwidth\undefined%
  \global\let\svgscale\undefined%
  \makeatother%
  \begin{picture}(1,0.52596719)%
    \lineheight{1}%
    \setlength\tabcolsep{0pt}%
    \put(0,0){\includegraphics[width=\unitlength,page=1]{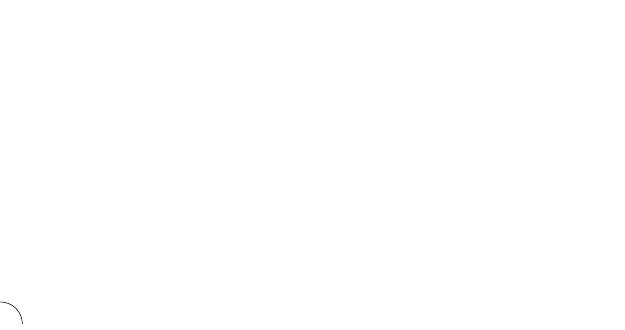}}%
    \put(0.00556582,0.02092893){\color[rgb]{0,0,0}\makebox(0,0)[lt]{\lineheight{0}\smash{\begin{tabular}[t]{l}$_\alpha$\end{tabular}}}}%
    \put(0,0){\includegraphics[width=\unitlength,page=2]{IIC.pdf}}%
    \put(0.01743497,0.07266356){\color[rgb]{0,0,0}\makebox(0,0)[lt]{\lineheight{0}\smash{\begin{tabular}[t]{l}$_\beta$\end{tabular}}}}%
    \put(0,0){\includegraphics[width=\unitlength,page=3]{IIC.pdf}}%
    \put(0.43785462,0.50286487){\color[rgb]{0,0,0}\makebox(0,0)[t]{\lineheight{1.25}\smash{\begin{tabular}[t]{c}$e_{\theta}$\end{tabular}}}}%
    \put(0,0){\includegraphics[width=\unitlength,page=4]{IIC.pdf}}%
    \put(0.18868645,0.3609236){\color[rgb]{0,0,0}\makebox(0,0)[t]{\lineheight{1.25}\smash{\begin{tabular}[t]{c}${\rm Ext}_\theta(\sfC)$\end{tabular}}}}%
    \put(0,0){\includegraphics[width=\unitlength,page=5]{IIC.pdf}}%
  \end{picture}%
\endgroup%

		\caption{A cluster $\sfC$ with its extremum ${\rm Ext}_\theta(\sfC)$ in direction $e_\theta$. In this example, ${\rm Ext}_\theta(\sfC)$ does not belong to $\sfC$ itself. The cell associated to the extremum is shaded in light blue and the line $\langle \cdot, e_\theta \rangle = {\rm E}_\theta(\sfC)$ is drawn in black.}
		\label{fig: extremal cell}
	\end{figure}
	
	Write $x_n$ for the primal vertex of $\bbL_{\rm mix}$ closest to $-ne_\theta$ and let $\sfC_{x_n}$ denote the cluster containing $x_n$. We define the IIC with three arms in the half-plane as the limiting measure
	\begin{align}
		\phi^{{\rm IIC},\theta}_{\bbL_{\rm mix},4}\left[\, \cdot \, \right] \coloneqq \lim_{n \to \infty} \phi_{\bbL_{\rm mix},4}\left[\, \cdot \,|\, {\rm Ext}_\theta(\sfC_{x_n}) = 0\right],
	\end{align}
	where the limit uses the weak convergence with respect to the product topology. In other words, the probability of any local event converges. 
	Under $\phi^{{\rm IIC},\theta}_{\bbL_{\rm mix},4}$, there exists an infinite cluster which is the limit of the clusters $\sfC_{x_n}$. We call this the incipient infinite cluster; 
	its extremal coordinate in the direction $e_\theta$ is $0$. 
	
	The measure $\phi^{{\rm IIC},\theta}_{\bbL_{\rm mix},4}$ describes the local environment around extrema of large clusters. Indeed, it may be shown that the neighbourhood of an extremum of a typical large cluster is distributed according to $\phi^{{\rm IIC},\theta}_{\bbL_{\rm mix},4}$, even when the cluster is conditioned on (reasonable) large scale features. The existence of the limit is a manifestation of the following mixing property, which in turn is a consequence of~\eqref{eq:RSW}.
	
	\begin{prop}\label{prop:IIC mixing at q=4}
		There exist constants $C, c_{\rm IIC} > 0$ such that the following holds for all $r \leq R/2$. For any configurations $\om_0$ on $\La_R^c$, any $x \in \La_R^c$ and any event $A$ depending only on edges in $\La_r$,
		\begin{align}
			\abs{\phi^{{\rm IIC},\theta}_{\bbL_{\rm mix},4}\left[A\right] - \phi_{\bbL_{\rm mix},4}\left[A \,|\, \om = \om_0 \text{ on } \La_R^c, {\rm Ext}_\theta(\sfC_x) = 0\right]} \leq C \big( \tfrac{r}{R}\big)^{c_{\rm IIC}},
		\end{align}
		as long as the conditioning is non-degenerate\footnote{The conditioning on $\{\om = \om_0 \text{ on } \La_R^c\}$ is always degenerate, but should be understood as imposing certain boundary conditions on the restriction of the measure to $\La_R$. Here, by non-degenerate, we mean that there exists at least one configuration in $\La_R$ such that $\{{\rm Ext}_\theta(\sfC_x)=0\}$ is realised.}.
	\end{prop}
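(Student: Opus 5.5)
The plan is a standard coupling/mixing argument: condition on the exploration of everything outside a mesoscopic scale, use the arm-separation of Lemma~\ref{lem:good flower domain} to reach a good flower domain with positive probability at each of many scales, and on such a domain couple the conditional measure with the IIC measure. Iterating over roughly $\log_2(R/r)$ disjoint dyadic scales gives geometric decay of the coupling failure, i.e.\ a polynomial bound $(r/R)^{c_{\rm IIC}}$. The existence of the limit defining $\phi^{{\rm IIC},\theta}_{\bbL_{\rm mix},4}$ then follows as a Cauchy-sequence statement from the same estimate, so I will prove the estimate comparing two arbitrary conditionings and derive both claims at once.

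First I reduce to arm events. On $\{{\rm Ext}_\theta(\sfC_x)=0\}$ with $x\in\La_R^c$, the cluster reaches from the cell of $0$ to distance $R$ while staying in $\{\langle\cdot,e_\theta\rangle\le \text{const}\}$. Its boundary therefore provides two dual arms flanking a primal arm in the half-plane, so $A_3^{{\rm hp}(\theta)}(0;1,R)$ holds up to a bounded local modification near the origin. Conversely, on the event that $A_3^{{\rm hp}(\theta)}(0;1,R)$ holds with a good flower domain at scale $\rho$, I can close the extremum condition inside $\La_{\rho}$ using \eqref{eq:FE} and the local geometry of cells.

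The coupling step goes as follows. Take two boundary data $(\om_0,x)$ and $(\om_0',x')$ on $\La_R^c$. For scales $\rho_k=2^{-k}R$ with $\rho_k\ge 4r$, I explore flower domains inward from $\La_{2\rho_k}$ to $\La_{\rho_k}$. By Lemma~\ref{lem:good flower domain}, at each scale, conditionally on everything revealed so far, the flower domain is good with probability at least $c$. Given a good flower domain $\calF$, the conditional law inside $\calF$ is a random-cluster measure on $\calF$ with boundary conditions: wired on the primal petal, free on the dual one. The remaining condition is that the primal petal connects to the cell of $0$ without crossing the half-plane beyond $0$. The key claim is that the Radon--Nikodym derivative between two such measures, for two good flower domains at the same scale, restricted to events in $\La_{\rho_k/4}$, is bounded above and below by constants. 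This follows from \eqref{eq:RSW}, quasi-multiplicativity \eqref{eq:quasi_multiplicativity}, and the fact that the arm probabilities from a well-separated petal configuration are comparable (the usual separation-of-arms argument). A bounded Radon--Nikodym derivative yields a coupling that succeeds with probability at least $c'>0$. On success, the two configurations agree inside an interface, and by the spatial Markov property they can be kept equal thereafter. Failure at all $K\asymp\log(R/r)$ scales has probability at most $(1-cc')^K=(r/R)^{c_{\rm IIC}}$, which bounds the total variation on events in $\La_r$.

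The main obstacle is the Radon--Nikodym comparability in the coupling step. The conditioning on the extremum is not monotone: it is simultaneously a primal connection and dual confinement by the hyperplane. FKG can therefore only be used separately for the primal and dual parts, via the three-arm structure, and the half-plane constraint must be handled by choosing the cone condition in the definition of a good flower domain. I would try to prove it by showing that both probabilities are comparable to $\phi_{\bbL_{\rm mix},4}[A_3^{{\rm hp}(\theta)}(0;1,\rho_k)]$, using \eqref{eq:mixing} and gluing constructions.

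Finally, applying the estimate with $x=x_n$, $x'=x_m$ and $R$ large shows that $\phi_{\bbL_{\rm mix},4}[A\mid {\rm Ext}_\theta(\sfC_{x_n})=0]$ is Cauchy, so the IIC measure exists. Averaging over $\om_0$ then yields the stated bound.
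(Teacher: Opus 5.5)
Your proposal is correct in outline and is essentially the argument the paper relies on. The paper gives no proof of its own: it cites the IIC construction of \cite{DCKozKraManOulRotationalInvarianceCritPlanar} and the multi-scale coupling of \cite[Prop.~3.1]{GarPetSchPivotalMeasure}, which is your scheme of exploring inward, reaching a good flower domain with positive conditional probability at each scale, coupling the next exploration via uniform comparability of the conditioned laws, and iterating over $\asymp\log(R/r)$ scales (the comparability step you flag as the main obstacle is precisely what that cited argument carries out). One small fix: take scales separated by a factor larger than $2$, e.g.\ $\rho_k = 8^{-k}R$, since Lemma~\ref{lem:good flower domain} conditions on the configuration outside $\La_{4\rho}$ and your comparison region $\La_{\rho_k/4}$ must contain the next exploration region $\La_{2\rho_{k+1}}$.
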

	
	A model-specific construction of the IIC measure can be found in \cite{DCKozKraManOulRotationalInvarianceCritPlanar, OulThesis}; for its associated polynomial mixing rate, we refer to \cite[Prop.\ 3.1]{GarPetSchPivotalMeasure}. 
	
	\paragraph{Track-exchanges and drift.} 
	A crucial step in the proof of the asymptotic rotational invariance for $1 \leq q \leq 4$ of \cite{DCKozKraManOulRotationalInvarianceCritPlanar} is to analyse how the extremum of an IIC cluster is affected by track-exchanges.
	
	Let $\om^{\rm IIC}$ be sampled from $\phi^{\rm IIC,\theta}_{\bbL_{\rm mix},4}$ and write $\sfC^{\rm IIC}$ for the incipient infinite cluster of $\om^{\rm IIC}$. Define $\bfS_{\rm even}$ as the transformation obtained by simultaneously applying the track-exchanges $\bfT_{2k}$ for all $k \in \bbZ$, and $\bfS_{\rm odd}$ as the transformation obtained by applying the track exchanges $\bfT_{2k-1}$ for $k \in \bbZ$. Since the track-exchanges appearing in the transformations act on disjoint tracks and since the infinite-volume measures on $\bbL_{\rm mix}$ and its transforms are unique, one may perform these exchanges simultaneously on $\om^{\rm IIC}$.

	When $\bfS_{\rm even}$ is applied to $\bbL_{\rm mix}$, each track of angle $\alpha$ is exchanged with the track of angle $\beta$ directly above it. In particular, the vertices at the bottom of the $\alpha$-tracks remain fixed. Hence, any cluster containing one of these vertices admits a well-defined image under $\bfS_{\rm even}$. 
	Analogously, when $\bfS_{\rm odd}$ is applied to $\bfS_{\rm even}(\bbL_{\rm mix})$, the tracks of angle $\alpha$ are again exchanged with the tracks of angle $\beta$ directly above them, and clusters containing at least two edges thus have a natural image after the transformation.
	Note that $(\bfS_{\rm odd} \circ \bfS_{\rm even})(\bbL_{\rm mix})$ is a translate of $\bbL_{\rm mix}$ with the same cell structure. 
	
	Due to large clusters being preserved, $\sfC^{\rm IIC}$ has a corresponding cluster in $(\bfS_{\rm odd} \circ \bfS_{\rm even})(\om^{\rm IIC})$, 
	which we denote by $\tilde \sfC^{\rm IIC}$. We then define the \emph{IIC increment} by
	\begin{align}
		\Delta^{\rm IIC}{\rm E}_\theta = {\rm E}_\theta(\tilde \sfC^{\rm IIC})-{\rm E}_\theta(\sfC^{\rm IIC}).
	\end{align}
	
	Note that $\bfS_{\rm even}(\bbL_{\rm mix})$ is also a translate of $\bbL_{\rm mix}$. As such, one might expect the application of $\bfS_{\rm even}$ and the subsequent application of $\bfS_{\rm odd}$ to have identical effects. This is not the case: $\bfS_{\rm even}(\bbL_{\rm mix})$ and $\bbL_{\rm mix}$ are translates of each other, but their partition into cells differ, which affects the definition of ${\rm Ext}_\theta$.
	This is the reason for applying both transformations before considering the increment.
	
	The expected increment $\bbE[\Delta^{\rm IIC}{\rm E}_\theta]$ --- where $\bbE$ refers to the expectation taken with respect to the coupling between $\om^{\rm IIC}$ and $(\bfS_{\rm odd} \circ \bfS_{\rm even})(\om^{\rm IIC})$ --- captures the \emph{drift} of the extremum of a large cluster under the transformation $(\bfS_{\rm odd} \circ \bfS_{\rm even})$. It was proved in \cite{DCKozKraManOulRotationalInvarianceCritPlanar} that this drift vanishes, i.e., for all $\theta \in [0,2\pi)$,
		\begin{align}\label{eq:IIC drift is null}
			\bbE[\Delta^{\rm IIC}{\rm E}_\theta] = 0.
		\end{align}
	This fact plays a central role in establishing universality among isoradial rectangular lattices for~$1 \leq q \leq 4$. 
	
	\section{Half-plane measures}\label{sec: hp-measures}
	
	The strategy of proving universality by successively transforming one lattice into another via a sequence of track-exchanges is faced with additional difficulties for $q > 4$. In this regime, the boundary conditions influence the model at infinite distance. Indeed, suppose a lattice $\bbL$ is transformed into $\bbL'$ by a sequence of track-exchanges, which we denote by $\bfS$. The transformation affects the boundary conditions in an uncontrollable manner, which, in this case, are crucial. In particular, it is unclear whether the push-forward of $\phi^0_{\bbL,q}$ by $\bfS$ is $\phi^0_{\bbL',q}$. This problem does not appear in the regime $1 \leq q \leq 4$ due to the uniqueness of the infinite-volume measure. 
	To circumvent this issue, we will work with half-plane measures with free boundary conditions. 
	
	From here onwards, we consider lattices $\bbL(\bm\alpha)$ where $\bm\alpha = (\alpha_i)_{i\geq 1}$ is a half-infinite sequence of angles; as in the previous section, these sequences will contain at most two distinct values. 
	These lattices cover the upper half-plane $\bbR \times  \bbR_{\geq0}$. 
	For such a lattice, write $\partial \bbL(\bm\alpha)$ for the set of its vertices lying on the horizontal axis $\bbR \times \{0\}$.
	
	\subsection{Uniqueness of the half-plane free measure} 
	
	We call a half-infinite sequence of angles  $\bm\alpha = (\alpha_i)_{i\geq 1}$ \emph{periodic} if there exists $k \in \bbN$ such that $\alpha_{k+i} = \alpha_i$ for all $i\geq 1$.
	
	\begin{prop}\label{prop:hp_measure}
		Fix $q \geq 1$ and a half-infinite periodic sequence of angles $\bm\alpha$. 
		For any sequence $\xi_n$ of boundary conditions on $\bbL(\bm\alpha)\cap \La_n$ with the property that $\xi_n$ is free on $\partial \bbL(\bm\alpha)$, 
		the measures $\phi_{\bbL(\bm\alpha)\cap \La_n,q}^{\xi_n}$ converge  as $n \to \infty$  to a measure $\phi_{\bbL(\bm\alpha),q}^0$, 
		which we call the \emph{free half-plane measure}.
	\end{prop}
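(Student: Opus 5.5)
Since the boundary conditions are free on $\partial \bbL(\bm\alpha)$ and vary only on the three remaining sides of $\La_n$, the plan is to sandwich $\phi^{\xi_n}_{\bbL(\bm\alpha)\cap\La_n,q}$ between the measure with free boundary conditions everywhere and the measure wired on the top and lateral sides but free on the bottom. Both extremal sequences are monotone in $n$: the free ones increase and the wired-on-three-sides ones decrease, by \eqref{eq:CBC} and \eqref{eq:SMP}. They therefore converge to limits $\phi^{0}$ and $\phi^{1,0}$ on the half-plane, with $\phi^{0} \le \phi^{1,0}$ stochastically. By \eqref{eq:CBC}, every subsequential limit of $\phi^{\xi_n}$ lies between them. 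Convergence thus reduces to showing $\phi^{0}=\phi^{1,0}$, and by the standard argument it is enough to show that the two measures give the same probability to each single edge being open.

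For the equality, the key input is translation invariance. Both $\phi^{0}$ and $\phi^{1,0}$ are invariant under horizontal lattice translations, since the boxes may be replaced by horizontally shifted ones without changing the monotone limits. Periodicity of $\bm\alpha$ additionally gives a vertical shift by one period $k$, mapping the half-plane into itself. I would then use the classical density argument. For a strip of height $m$ (a multiple of $k$) and width $2n$, compare $\log Z^{1,0}$ with $\log Z^{0}$. By a boundary-modification bound, inserting wirings on the top and lateral sides changes $\log Z$ by at most $O(\text{perimeter})$. On the other hand, $\frac{d}{dp_e}\log Z$ relates to edge densities, so if $\phi^{1,0}[\om_e]>\phi^0[\om_e]$ on some edge $e$, translation invariance along each row yields a discrepancy of order $n$ per row. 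Summing over a periodic family of rows gives order $nm$, which contradicts the perimeter bound as $n,m\to\infty$.

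The natural way to carry this out is to consider the measures on $\bbL(\bm\alpha)\cap([-n,n]\times[0,m])$ and to compute the derivative of the free energy with respect to a global multiplicative parameter $p_e \mapsto$ tilted $p_e$. This uses the fact that the free energy per unit area of the half-strip is the same for both boundary conditions, since they differ only on a set of size $O(n+m)$. Convexity of the free energy in the tilt parameter then gives equality of edge densities wherever the derivative exists.

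The main obstacle is that derivatives exist only for all but countably many values of the tilt, whereas we need equality at the isoradial weights themselves. In the full plane for $q>4$ this fails genuinely: $\phi^0\neq\phi^1$ there. The half-plane situation must therefore use the free bottom boundary. The cleanest plan is to avoid the free energy altogether and argue directly. Under $\phi^{1,0}$ with $q>4$, the bulk far from the bottom looks like the wired phase, so the argument above cannot hold as stated. I would instead observe that the limit along the sandwiched sequences is dominated by the measure wired only at infinite distance, and then use the free bottom boundary to rule out the wired phase. The route I would try is to take a boundary condition along $\partial\La_n$ and note that, by \eqref{eq:FE} and the free row at height $0$, dual circuits in the half-plane surrounding any fixed box and anchored on the bottom exist with positive probability at every scale, uniformly in the boundary condition. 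I would attempt this through a half-plane renormalisation comparison with $\phi^0_{\bbL(\bm\alpha),q}$, which has exponential decay by \eqref{eq:exponential_decay_L(alpha)}. Such dual arcs would disconnect the boundary influence, and then \eqref{eq:SMP} would give decoupling and equality of the limits. Establishing these dual arcs uniformly in $n$ is the step I expect to be hardest.
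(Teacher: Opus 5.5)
\paragraph{Verdict: there is a genuine gap.} Your reduction is correct and matches the paper. You sandwich $\phi^{\xi_n}$ between the free measure and the measure $\phi^{1/0}$ (your $\phi^{1,0}$), which is wired on the top and sides and free on the bottom, and then show that the two monotone limits coincide. You are also right to discard the free-energy route, since it is inconclusive exactly at the isoradial weights for $q>4$. The gap is the remaining step, which is the whole content of the proposition.

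\paragraph{Why the dual-arc plan fails.} You want bottom-anchored dual arcs at every scale with probability bounded below uniformly in the boundary condition, obtained from \eqref{eq:FE} and a comparison with $\phi^0_{\bbL(\bm\alpha),q}$. This does not work, for three reasons.
\begin{enumerate}
\item Finite energy only gives bounds that degenerate with the scale.
\item The measures you must control dominate the free half-plane measure. Because they carry wired conditions at the outer boundary, they are not dominated by $\phi^0_{\bbL(\bm\alpha),q}$. So its exponential decay cannot be transferred to them by monotonicity.
\item For $q>4$, a scale-uniform lower bound of this kind should actually be false. With wired conditions on $\partial\La_{2\ell}$, a bottom-anchored dual arc around $\La_\ell$ forces the free/wired interface to rise to height $\ell$. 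That interface is pinned at the two points where the boundary condition changes, so rising costs $e^{-c\ell}$. The free phase survives near the bottom only because the wired boundary is sent to infinity first.
\end{enumerate}
Relatedly, your heuristic that the bulk of $\phi^{1/0}$ looks like the wired phase is the opposite of what the proposition asserts: the proposition forces $\phi^{1/0}$ to be free everywhere. The statement is a wetting phenomenon tied to the exact self-dual weights, and nothing in your plan uses self-duality. Your plan also relies on exponential decay, so it would not cover $1\le q\le 4$.

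\paragraph{The missing idea: a global argument.} The paper proves that $\phi^{1/0}$ has a.s.\ no infinite cluster. This gives bottom-anchored dual arcs at some random scale. Your decoupling via \eqref{eq:SMP} and \eqref{eq:CBC} then yields $\phi^{1/0}\le\phi^0$, and this final step is exactly how the paper concludes.

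To exclude an infinite cluster, the paper proceeds as follows.
\begin{enumerate}
\item Let $\phi$ be the increasing limit of the downward translates of $\phi^{1/0}$ by multiples of the period. This is where periodicity of $\bm\alpha$ is really used: $\phi$ is an FKG measure on a full-plane periodic lattice, invariant under its translations.
\item By self-duality of the isoradial weights, and because the wired parts of the boundary were sent to infinity before the free bottom, $\phi$ is stochastically dominated by a translate of its own dual.
\item The non-coexistence theorem of Sheffield and of Duminil-Copin--Raoufi--Tassion forbids simultaneous primal and dual infinite clusters under $\phi$. Combined with the domination, the primal configuration has no infinite cluster.
\item Since $\phi^{1/0}\le\phi$, the same holds for $\phi^{1/0}$.
\end{enumerate}
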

	
	\begin{proof}
		Write $\phi^{1/0}_{\bbL(\bm\alpha) \cap \La_n,q}$ for the isoradial random-cluster measure on $\bbL(\bm\alpha) \cap \La_n$ with free boundary conditions on $[-n,n] \times \{0\}$ and wired boundary conditions for the rest of the boundary. Let $\phi^{1/0}_{\bbL(\bm\alpha),q}$ be the half-plane random-cluster measure which is the weak (decreasing) limit of $\phi^{1/0}_{\bbL(\bm\alpha) \cap \La_n,q}$ for $n \to \infty$.
		
		Similarly, write $\phi^0_{\bbL(\bm\alpha) \cap \La_n,q}$ for the random-cluster measure on $\bbL(\bm \alpha) \cap \La_n$ with free boundary conditions everywhere and denote its weak (increasing) limit by $\phi^0_{\bbL(\bm\alpha),q}$.
		 
		By \eqref{eq:CBC}, it suffices to show that 
		\begin{align}
		\phi^{1/0}_{\bbL(\bm \alpha),q} = \phi^{0}_{\bbL(\bm \alpha),q},
		\end{align}
		which in turn follows from the fact that, under $\phi^{1/0}_{\bbL(\bm \alpha),q}$, there exists a.s.\ no infinite cluster. 
		The absence of an infinite cluster in $\phi^{1/0}_{\bbL(\bm \alpha),q}$ follows from the arguments of \cite{GeoHigGibbsIsing, GlaManGibbsFKPotts},
		with the periodicity of the sequence of angles playing a particular role.
		We give a brief sketch for completeness. 
		
		Consider the increasing limit $\phi$ of the downward translates of $\phi^{1/0}_{\bbL(\bm \alpha),q}$.
		Then $\phi$ is a percolation measure on a periodic isoradial lattice $\bbL$. Furthermore, $\phi$ itself is invariant under the translations which map $\bbL$ to itself.
		The dual of $\phi$ produces configurations on the horizontal translate of $\bbL$ by $1$. 
		Furthermore, due to the order in which the different parts of the boundary were taken to infinity, $\phi$ is stochastically dominated by its dual (translated by $(-1,0)$). 
		
		Due to the planarity and periodicity of $\bbL$, the non-coexistence theorem of \cite{SheRandomSurfaces,DumRaoTasSharpnessFK} applies, and we conclude that, under $\phi$, either the primal or the dual configuration contains no infinite cluster.
		By the domination above, the primal configuration contains a.s.\ no infinite cluster, which extends to $\phi^{1/0}_{\bbL(\bm \alpha),q}$ by stochastic domination. 
	\end{proof}

	\begin{cor}[Track-exchange for half-plane measures]\label{cor:track_exchange_hp}
		Fix $q \geq 1$. Let $\bm\alpha$ be a half-infinite periodic sequence of angles and $A \subset \bbN$ be a periodic set containing no consecutive integers. 
		Write $\bm\alpha'$ for the half-infinite periodic sequence obtained from $\bm\alpha$ by exchanging $\alpha_{i-1}$ and $\alpha_{i}$ for each $i\in A$.
		If $\omega$ is sampled according to $\phi_{\bbL(\bm\alpha),q}^0$ and $\omega'$ is the configuration obtained by applying each $(\bfT_i)_{i\in A}$ to $\omega$, 
		then $\omega'$ is distributed according to $\phi_{\bbL(\bm\alpha'),q}^0$.
	\end{cor}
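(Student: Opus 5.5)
The plan is to realise $\phi^0_{\bbL(\bm\alpha),q}$ as a limit of finite-volume measures to which the track-exchange operator of Proposition~\ref{prop:track-exchange operator} applies exactly, and then to pass to the limit using the uniqueness statement of Proposition~\ref{prop:hp_measure}. Since $A$ contains no consecutive integers, the exchanges $(\bfT_i)_{i\in A}$ act on pairwise disjoint pairs of tracks $\{t_{i-1},t_i\}$. They can therefore be applied simultaneously, or sequentially in any order, and give the same result. It thus suffices to control what happens inside a finite horizontal strip and then let its height go to infinity.

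First I fix $m$ large, a multiple of the period of $\bm\alpha$ and of $A$ such that $m\notin A$ and $m+1 \notin A$ (possible after enlarging by the period; if not, allow one extra track). Let $\bbS_m$ be the strip formed by the tracks $t_1,\dots,t_m$. Consider $\omega\sim\phi^0_{\bbL(\bm\alpha),q}$ and condition on its restriction to the tracks above $t_m$. By \eqref{eq:SMP}, the law of $\omega$ on $\bbS_m$ is then $\phi^{\xi}_{\bbS_m,q}$. Here $\xi$ is free on the bottom boundary $\partial\bbL(\bm\alpha)$, because there is nothing below it, and on the top boundary it is the partition induced by connections in $\omega$ above the strip. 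These connections are unaffected by the exchanges, since they only involve tracks outside the strip.

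Within the strip I apply Proposition~\ref{prop:track-exchange operator} for each $i\in A\cap[2,m]$ in turn. The boundary vertices below $t_1$ and above $t_m$ are the same for $\bbS_m$ and its transform, so each step maps $\phi^{\xi}_{\bbS,q}$ to $\phi^{\xi}_{\bbS',q}$ with the same $\xi$. After all steps, the configuration on the strip has law $\phi^{\xi}_{\bbS'_m,q}$, where $\bbS'_m$ is the strip of $\bbL(\bm\alpha')$. Combined with the unchanged configuration above, and reversing the \eqref{eq:SMP} decomposition, this shows that $\omega^{(m)}$ (only exchanges with $i\le m$ performed) has the law of $\omega'$ restricted to $\bbS'_m$ mixed over boundary conditions free at the bottom.

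Next I pass to the limit in $m$. By the locality in Proposition~\ref{prop:track-exchange operator}, the state of $\omega'$ on any fixed edge coincides with that of $\omega^{(m)}$ once $m$ is large, with probability tending to one. It remains to identify the limit law. For any local increasing event $E$, the law of $\omega^{(m)}$ on $\bbS'_m$ is an average of measures $\phi^{\xi}_{\bbS'_m,q}$ with $\xi$ free on $\partial\bbL(\bm\alpha')$. By \eqref{eq:CBC}, $\phi^{\xi}_{\bbS'_m,q}[E]$ lies between the fully free value and the value with the top wired. Both of these are finite-volume approximations of the type in Proposition~\ref{prop:hp_measure}, after further truncation in the horizontal direction, for which uniqueness of limits of strip measures holds by \eqref{eq:FE}. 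Since $\bm\alpha'$ is periodic, both bounds converge to $\phi^0_{\bbL(\bm\alpha'),q}[E]$. Hence $\omega'\sim\phi^0_{\bbL(\bm\alpha'),q}$.

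The main obstacle is this sandwich: justifying that strip measures with arbitrary top boundary conditions, which are infinite horizontally, converge to the half-plane free measure. I would handle it by approximating each strip measure by boxes $\bbS'_m\cap\La_n$ with the induced boundary conditions, taking a diagonal sequence, and invoking Proposition~\ref{prop:hp_measure} for sequences $\xi_n$ free on the bottom. Monotonicity under \eqref{eq:CBC} lets me replace the arbitrary $\xi$ by the wired and free extremes.
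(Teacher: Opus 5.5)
Your argument is correct, but it is organised differently from the paper's.

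\textbf{The paper's route.} The paper never conditions the half-plane sample. It starts from $\om_K\sim\phi^0_{\bbS_K,q}$, with free boundary conditions at \emph{both} the top and the bottom of the strip. It applies the finitely many exchanges $\bfT_i$, $i\in A$, $i\le K$ (with $K+1\notin A$) to obtain exactly $\phi^0_{\bbS'_K,q}$, and then lets $K\to\infty$ on both sides. Since the top is free, the convergence of these strip measures to the half-plane free measure is essentially monotone. However, the closing step tacitly requires pushing the map $\bfS$ through a weak limit, which relies on the locality of the track-exchanges (last item of Proposition~\ref{prop:track-exchange operator}).

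\textbf{Your route.} You work instead with the actual sample $\om\sim\phi^0_{\bbL(\bm\alpha),q}$, and use \eqref{eq:SMP} to write its restriction to the strip as a mixture of strip measures with random top boundary conditions. This costs you more on the measure side. You need:
\begin{itemize}
\item the uniqueness of Proposition~\ref{prop:hp_measure} in its full strength (arbitrary boundary conditions that are free at the bottom);
\item an infinite-volume form of \eqref{eq:SMP} for the unbounded strip, which is legitimate here because the half-plane measure has no infinite cluster and strip measures with given boundary conditions are unique by \eqref{eq:FE};
\item the fact that $\bfT_i$ does not depend on $\xi$ (third item of Proposition~\ref{prop:track-exchange operator}).
\end{itemize}
In exchange, you need no continuity of $\bfS$ at all. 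If $m+1\notin A$, the exchanges with $i\le m$ act only on $t_1,\dots,t_m$ and depend only on the configuration there, while those with $i\ge m+2$ do not touch these tracks. So $\om'$ and $\om^{(m)}$ agree \emph{exactly} on $t_1,\dots,t_m$, and locality is not needed for this step.

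\textbf{Two minor points.}
\begin{itemize}
\item Only $m+1\notin A$ is required. Also asking $m\notin A$ is unnecessary, since exchanging $t_{m-1},t_m$ inside the strip is covered by Proposition~\ref{prop:track-exchange operator}. It is moreover impossible together with $m+1\notin A$ when $A$ is, e.g., the set of even integers $\geq 2$.
\item The sandwich is cleanest without the wide rectangles $\bbS'_m\cap\La_n$, which are not of the form treated in Proposition~\ref{prop:hp_measure}. Instead, restrict any strip measure that is free at the bottom to the box $\bbL(\bm\alpha')\cap\La_h$, with $h$ the height of the strip. By \eqref{eq:SMP} this restriction is a mixture of measures $\phi^{\xi}_{\bbL(\bm\alpha')\cap\La_h,q}$ with $\xi$ free on $\partial\bbL(\bm\alpha')$. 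Since Proposition~\ref{prop:hp_measure} holds for every such sequence of boundary conditions, the convergence is uniform in $\xi$. This gives the limit directly, without \eqref{eq:CBC} or a diagonal argument.
\end{itemize}
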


	\begin{proof}
		Let $\bbS_K$ be the strip of $\bbL(\bm\alpha)$ formed of the horizontal tracks $t_1,\dots, t_K$ and similarly
		let $\bbS_K'$ be the strip of $\bbL(\bm\alpha')$ containing the tracks $t_1,\dots, t_K$. 
		Write $\phi^0_{\bbS_K,q}$ and $\phi^0_{\bbS'_K,q}$ for the measures on $\bbS_K$ and $\bbS'_K$, respectively, with free boundary conditions.

		Write $\bfS$ for the composition of all transformations $(\bfT_i)_{i\in A}$.
		Assume now that $K$ is such that $K+1\notin A$. Then, we may apply all track exchanges $(\bfT_i)_{i\in A; i\leq K}$ to $\bbS_K$ to obtain $\bbS'_K$. 
		For simplicity, we call the composition of these transformations also $\bfS$, since the transformations $(\bfT_i)_{i\in A; i> K}$ do not apply to $\bbS_K$ and hence are considered trivial in this setting. 
	
		If $\omega$ is sampled according to $\phi^0_{\bbS_K,q}$, then, by Proposition~\ref{prop:track-exchange operator},  the law of $\bfS(\omega)$ is given by $\phi^0_{\bbS'_K,q}$.
		Proposition~\ref{prop:hp_measure} then implies that
		\begin{align}
			\phi_{\bbL(\bm\alpha),q}^0 = \lim_{K\to \infty} \phi^0_{\bbS_K,q} \quad\text{ and } \quad 
			\phi_{\bbL(\bm\alpha'),q}^0 = \lim_{K\to \infty} \phi^0_{\bbS'_K,q}.
		\end{align} 
		The conclusion follows. 
	\end{proof}

	\begin{cor}\label{cor:top_does_not_matter}
		Fix $q \geq 1$ and let $\bm\alpha$ and $\bm\beta$ be two half-infinite periodic sequences of angles with $\alpha_i = \beta_i$ for all $i\leq n$. 
		Then, for any event $H$ depending only on the edges on the tracks $t_1, \dots, t_n$,
		\begin{align}
			\phi_{\bbL(\bm\alpha),q}^0[H] = \phi_{\bbL(\bm\beta),q}^0[H].
		\end{align}
	\end{cor}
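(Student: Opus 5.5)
The idea is to realise both half-plane measures as limits of the same objects, namely free-boundary measures on finite strips, and to show that the bottom $n$ tracks do not see the angles above them. Let $\bbS_K(\bm\alpha)$ denote the strip formed by the tracks $t_1,\dots,t_K$ of $\bbL(\bm\alpha)$, with free boundary conditions on both its bottom and top boundary. As in the proof of Corollary~\ref{cor:track_exchange_hp}, Proposition~\ref{prop:hp_measure} gives
\begin{align}
\phi^0_{\bbL(\bm\alpha),q} = \lim_{K\to\infty}\phi^0_{\bbS_K(\bm\alpha),q},
\end{align}
and the same holds for $\bm\beta$. We may use these limits because the boundary conditions $\xi_n$ induced on $\bbL(\bm\alpha)\cap\La_n$ by the free strip measure are free on $\partial\bbL(\bm\alpha)$, and free on the top as well. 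It therefore suffices to show that, for $K > n$, the restriction of $\phi^0_{\bbS_K(\bm\alpha),q}$ to the tracks $t_1,\dots,t_n$ depends only on $\alpha_1,\dots,\alpha_n$.

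The key step is a free-boundary track-exchange argument inside the finite strip. Take a track $t_K$ at the very top of $\bbS_K$ with free boundary conditions above it. Using Proposition~\ref{prop:track-exchange operator}, I would move $t_K$ downward, exchanging it successively with $t_{K-1},\dots,t_{n+1}$. Each exchange preserves the law outside the two tracks involved, so it preserves the law on $t_1,\dots,t_n$, as long as the moving track stays strictly above track $n$. This only reorders the angles above level $n$, which alone is not enough.

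The crux is to show that the \emph{topmost} track, with free boundary conditions above it, can simply be deleted. Concretely, the marginal of $\phi^0_{\bbS_K,q}$ on $t_1,\dots,t_{K-1}$ should equal $\phi^0_{\bbS_{K-1},q}$ with free top boundary. Once this holds, induction on $K$ reduces the strip to $\bbS_n$, which depends only on $\alpha_1,\dots,\alpha_n$. For $\bm\alpha$ and $\bm\beta$ these reduced strips coincide, which gives the claim.

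To prove the deletion step, I would look at the topmost row. With free boundary conditions above, the dual vertices above $t_K$ are all wired together, since the free primal boundary corresponds to a wired dual boundary. Track $t_K$ then consists of vertical primal edges dangling from the row below to top vertices that are otherwise free. Each such top vertex has a single upward edge, together with horizontal edges lying along the top boundary row. Summing over the states of these edges, the row contributes a constant factor independent of the configuration below, as long as the edges involved are only pendant edges and edges among the new top vertices. This is exactly the reasoning that underlies the star-triangle and boundary-track identities for isoradial weights.

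I expect the main obstacle to be making this summation exact, because the top row of primal vertices is joined by horizontal edges of $t_K$'s neighbour. If the row does not factor directly, I would instead use the fact that in the infinite strip the boundary track can be absorbed by a "track removal at a free boundary" move, also built from star-triangle transformations, which leaves the law below invariant. Since the strips are infinite horizontally, uniqueness of the strip measures (a consequence of \eqref{eq:FE}) justifies applying such local moves everywhere simultaneously.
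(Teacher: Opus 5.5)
\textbf{There is a genuine gap, at the step you call the crux.} For $q>1$ the top track cannot be deleted, and the finite-$K$ statement you reduce to is false.

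In $\bbL(\bm\alpha)$ there are no horizontal primal edges. Every primal edge of a track joins a vertex on the bottom line of that track to one on its top line. So each primal vertex $u_j$ on the top line of $t_K$ has exactly two edges, one long and one short, going down to consecutive vertices $b_j,b_{j+1}$ of the row below. It is not a single pendant edge plus horizontal ones, as you describe.

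Summing out such a free degree-two vertex is a series reduction, not a constant factor. Writing $y_e=p_e/(1-p_e)$, the two edges act as one effective edge between $b_j$ and $b_{j+1}$ with weight $y_1y_2/(q+y_1+y_2)>0$. With the isoradial weights $y_1y_2=q$, so this equals $q/(q+y_1+y_2)$, which changes with the angle of $t_K$. Hence the marginal of $\phi^0_{\bbS_K,q}$ on $t_1,\dots,t_{K-1}$ is the random-cluster measure on $\bbS_{K-1}$ with extra positive-weight edges along its top row. For $q>1$ it gives strictly larger probability than $\phi^0_{\bbS_{K-1},q}$ to, say, $b_j$ and $b_{j+1}$ being connected inside $\bbS_{K-1}$.

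Taking $K=n+1$ shows that the law on $t_1,\dots,t_n$ under a finite free strip genuinely depends on the angle above $t_n$. So no ``track removal at a free boundary'' move built from star-triangle transformations can exist. The deletion is valid only for $q=1$, where edges are independent. Independence from the upper angles appears only in the limit $K\to\infty$.

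\textbf{The missing idea} is to use Proposition~\ref{prop:hp_measure} in its full strength. The convergence holds for \emph{every} sequence of boundary conditions that is free on the bottom, hence uniformly in what sits above the strip.
\begin{itemize}
\item Given $\eps>0$, choose $K\geq n$ such that $\phi^\xi_{\bbS(\alpha_1,\dots,\alpha_K),q}[H]$ is within $\eps$ of $\phi^0_{\bbL(\bm\alpha),q}[H]$ for all such $\xi$, and similarly for $\bm\beta$.
\item Pad the strips. By \eqref{eq:SMP}, the free measure on $\bbS(\alpha_1,\dots,\alpha_K,\beta_{n+1},\dots,\beta_K)$, restricted to its first $K$ tracks, is a mixture of such measures. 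So it gives $H$ a probability within $\eps$ of $\phi^0_{\bbL(\bm\alpha),q}[H]$. The same holds for $\bbS(\beta_1,\dots,\beta_K,\alpha_{n+1},\dots,\alpha_K)$ and $\bm\beta$.
\item These two strips share the first $n$ angles and carry the same multiset of angles above $t_n$. Your correct observation, that track exchanges strictly above $t_n$ preserve the law on $t_1,\dots,t_n$, then turns one strip into the other.
\item Hence $|\phi^0_{\bbL(\bm\alpha),q}[H]-\phi^0_{\bbL(\bm\beta),q}[H]|\leq 2\eps$, and letting $\eps\to0$ concludes.
\end{itemize}
This is the paper's argument; the padding replaces the deletion step entirely.
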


	\begin{proof}
		Let $\bbS(\theta_1,\dots,\theta_K)$ denote the strip with $K$ horizontal tracks of transverse angles $\theta_1,\dots,\theta_K$ and vertical tracks of transverse angle $0$. 
		Let $\phi^\xi_{\bbS(\theta_1,\dots,\theta_K),q}$ denote the measure on this strip with boundary conditions $\xi$. 

		Fix $H$ as in the statement and $\eps > 0$. Then, by Proposition~\ref{prop:hp_measure} applied to both $\bbL(\bm\alpha)$ and $ \bbL(\bm\beta)$, there exists $K \geq n$ such that
		\begin{align}\label{eq:HHLL}
			\big|\phi_{\bbL(\bm\alpha),q}^0[H] - \phi_{\bbS(\alpha_1,\dots,\alpha_K),q}^\xi[H]\big| \leq \eps \quad \text{ and }\quad
			\big|\phi_{\bbL(\bm\beta),q}^0[H] - \phi_{\bbS(\beta_1,\dots,\beta_K),q}^\xi[H]\big| \leq \eps 
		\end{align}
		for any boundary conditions $\xi$ that are free on the bottom of the strip. 
	
		Consider now the strips
		\begin{align}
			\bbS \coloneqq \bbS(\alpha_1,\dots,\alpha_K, \beta_{n+1}, \dots, \beta_K)   \quad \text{ and }\quad 	\bbS' \coloneqq \bbS(\beta_1,\dots,\beta_K, \alpha_{n+1}, \dots, \alpha_K).
		\end{align}
		Due to \eqref{eq:HHLL} and \eqref{eq:SMP}, 
		\begin{align}\label{eq:HHLL1}
			\big|\phi_{\bbL(\bm\alpha),q}^0[H] - \phi_{\bbS,q}^0[H]\big| \leq \eps \quad \text{ and }\quad
			\big|\phi_{\bbL(\bm\beta),q}^0[H] - \phi_{\bbS',q}^0[H]\big| \leq \eps.
		\end{align}
	
		Since $\alpha_i = \beta_i$ for all $i\leq n$, there exists a sequence of track exchanges that act only above the track $t_n$ and turn $\bbS$ into $\bbS'$. 
		The configuration on $t_1,\dots,t_n$ does not change when applying this sequence of transformations, so we find 
		\begin{align}\label{eq:HHLL2}
			 \phi_{\bbS,q}^0[H] = \phi_{\bbS',q}^0[H].
		\end{align}
		Combining this with \eqref{eq:HHLL1}, we conclude that $|\phi_{\bbL(\bm\alpha),q}^0[H] - \phi_{\bbL(\bm\beta),q}^0[H]| \leq 2\eps$.
		Finally, since $\eps > 0$ is arbitrary, we find $\phi_{\bbL(\bm\alpha),q}^0[H] =\phi_{\bbL(\bm\beta),q}^0[H]$.
	\end{proof}
	
	\subsection{Rate of decay of connection probabilities in the half-plane}

	For $\alpha \in (0,\pi)$, write $\bbL_+(\alpha)$  for the half-plane isoradial lattice with constant transverse angle $\alpha$ for the tracks $(t_{n})_{n\geq1}$. 
	Since we will exclusively be working with half-plane measures, we now introduce the half-plane analogue of $\zeta$.
	For $\theta \in [0,2\pi)$, set
	\begin{align}
		\zeta_{\alpha,q}^{\rm hp}(\theta) = \Big(\lim_{n\to \infty} - \tfrac1n \log \phi^0_{\bbL_+(\alpha),q}\big[0 \lr \calH_{\geq n}^{\theta}\big]\Big)^{-1}.
	\end{align}
	The existence of the limit, similarly to that of $\zeta_{\alpha,q}(\theta)$, is proved using the super-additivity of the sequence
	\begin{align}
		\max_{x \in  \calH_{\geq n}^{\theta}}  \log \phi^0_{\bbL_+(\alpha),q}[0 \lr x].
	\end{align}
	Finally, observe that, due to \eqref{eq:exponential_decay_L(alpha)} and the finite energy property \eqref{eq:FE}, 
	\begin{align}
 		0<\zeta_{\alpha,q}^{\rm hp}(\theta) < \infty
	\end{align}
	for any $q > 4$, $\alpha \in (0,\pi)$, and $\theta \neq 3\pi/2$. 

	We call a direction $\theta$ {\em upper-half-plane aiming} for $\bbL(\alpha)$ if there exists an infinite number of integers $n \geq 1$ such that 
	the point $x \in   \calH_{\geq n}^{\theta}$ maximising $\phi^0_{\bbL(\alpha),q}[0 \lr x\,]$ lies in the upper half-plane. 
	The upper-half-plane aiming directions may be shown to be those dual --- in the sense of \eqref{eq:convex-duality_iso} --- to $\theta \in [0,\pi]$.
	For $\bbL(\pi/2)$, all $\theta \in [0,\pi]$ are upper-half-plane aiming due to the invariance of the model with respect to vertical reflections.
	This is not necessarily the case for $\bbL(\alpha)$, as this particular symmetry may be lost. 

	We note two fundamental facts that are due to the invariance of the models $\phi^0_{\bbL(\alpha),q}$ under rotation by $\pi$. For any $\theta \in [0,\pi)$ and $\alpha  \in (0,\pi)$,
	\begin{align}
		&\zeta_{\alpha,q}(\theta) = 	\zeta_{\alpha,q}(\pi+ \theta) \quad \text{ and } \quad 
		\text{at least one of $\theta$ and $\pi+\theta$ is upper-half-plane aiming}.
		\label{eq:upper-half-plane-aiming}
	\end{align}
	Finally, we state the essential link between $\zeta_{\alpha,q}^{\rm hp}(\theta)$ and $\zeta_{\alpha,q}(\theta)$.

	\begin{prop}\label{prop:hp_cor_len}
		Fix $\alpha \in (0,\pi)$, $q > 4$, and $\theta \in [0,2\pi)$. Then 
		\begin{align}\label{eq:zeta_zeta_hp}
			\zeta_{\alpha,q}(\theta) \geq \zeta_{\alpha,q}^{\rm hp}(\theta),
		\end{align}
		with equality if $\theta$ is upper-half-plane aiming for $\bbL(\alpha)$.
	\end{prop}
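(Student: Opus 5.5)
The inequality \eqref{eq:zeta_zeta_hp} amounts to comparing the half-plane free measure with the full-plane free measure. The equality case amounts to showing that, in upper-half-plane aiming directions, the optimal connections in the full plane can be realised inside the upper half-plane at subexponential cost.

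\emph{Inequality.} The measure $\phi^0_{\bbL_+(\alpha),q}$ is the limit of free measures on $\bbL_+(\alpha)\cap\La_n$, which are subgraphs of $\bbL(\alpha)\cap\La_n$. A free measure on a subgraph is stochastically dominated by the free measure on the larger graph: by \eqref{eq:SMP}, the restriction of $\phi^0_{\bbL(\alpha)\cap\La_n,q}$ to the half-box, conditioned on the outside, is a measure with boundary conditions at least as wired as free, so \eqref{eq:CBC} applies. Passing to the limit and using the identification of the limit from Proposition~\ref{prop:hp_measure}, we get
\[
\phi^0_{\bbL_+(\alpha),q}[0\lr\calH^\theta_{\geq n}]\leq\phi^0_{\bbL(\alpha),q}[0\lr\calH^\theta_{\geq n}],
\]
which gives $\zeta^{\rm hp}\leq\zeta$.

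\emph{Equality.} Assume $\theta$ is upper-half-plane aiming, and fix $\eps>0$.

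1. By the definition of aiming, for infinitely many $n$ there is $x_n$ in the upper half-plane with $\langle x_n,e_\theta\rangle\geq n$ and
\[
\phi^0_{\bbL(\alpha),q}[0\lr x_n]\geq \max_{x\in\calH^\theta_{\ge n}}\phi^0_{\bbL(\alpha),q}[0\lr x]\geq e^{-n/\zeta-o(n)},
\]
the last bound holding since $\calH^\theta_{\geq n}$ meets only polynomially many relevant points of $\La_{Cn}$ and far points have negligible probability.

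2. Next compare $\phi^0_{\bbL(\alpha),q}[0\lr x_n]$ with the half-plane measure. Free boundary conditions on $\bbR\times\{0\}$ in the half-plane are the issue, so we shift the construction. Take the point $y=(0,m)$ with $m=\delta n$. Translating, $\phi^0_{\bbL(\alpha),q}[y\lr y+x_n]=\phi^0_{\bbL(\alpha),q}[0\lr x_n]$. By exponential decay in the full plane, with probability at least $1-e^{-c\delta n}$ the connection $y\lr y+x_n$ is realised through a path avoiding the lower half-plane. Indeed, $y$ and $y+x_n$ lie at height $\geq m$, so a path reaching height $0$ forces a connection of length $\gtrsim m$, costing an extra factor. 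To make this rigorous, I would use the convexity/subadditive structure: a path from $y$ to $y+x_n$ that visits the lower half-plane gives connections $y\lr z$ and $z\lr y+x_n$ with $z$ below $0$. This has probability at most $e^{-(\|z-y\|_\xi+\|y+x_n-z\|_\xi)}$, which exceeds $\|x_n\|_\xi$ by order $m$ unless $x_n$ is nearly parallel to the axis. In that degenerate case one raises $y$ further.

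3. Given such a path in the upper half-plane, the event that $y\lr y+x_n$ within the upper half-plane is increasing and depends only on edges there. Its probability under $\phi^0_{\bbL_+(\alpha),q}$ must be compared with $\phi^0_{\bbL(\alpha),q}$; this is the main obstacle, since domination goes the wrong way. I would try: condition on the dual being open along a dual path just below the horizontal axis near the relevant window. By \eqref{eq:FE} this is not available cheaply over length $n$, so instead use Corollary~\ref{cor:top_does_not_matter}-type locality. The correct approach is probably to generate free boundary conditions along the axis by exponential decay: under $\phi^0_{\bbL(\alpha),q}$, restricted to the strip above height $m/2$, the boundary induced from below, in windows far from the clusters, is free with high probability up to small clusters. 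Wiring effects are then absorbed into an $e^{-o(n)}$ cost via finite energy, by closing $O(\delta n)$ edges near the path. This gives $\phi^0_{\bbL_+}[y\lr \calH^\theta_{\geq n+\langle y,e_\theta\rangle}]\geq e^{-n/\zeta-C\delta n}$. Finally, connecting $0$ to $y$ costs $e^{-C\delta n}$ by \eqref{eq:FKG} and finite energy. Letting $n\to\infty$ and then $\delta\to0$ yields $\zeta^{\rm hp}\geq\zeta$.
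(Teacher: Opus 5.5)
Your proof of $\zeta_{\alpha,q}(\theta)\ge\zeta^{\rm hp}_{\alpha,q}(\theta)$ is correct and is the paper's argument (comparison of boundary conditions). The equality part, however, has a genuine gap, at the step you flag yourself in Step~3. You need a \emph{lower} bound on $\phi^0_{\bbL_+(\alpha),q}$ for an increasing event. The only domination available goes the wrong way.

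Your proposed remedy does not close the gap. The difference between the half-plane free measure and the upper-half-plane marginal of the full-plane measure lies in the boundary condition induced along the whole window, of length of order $n$, beneath the path. Closing $O(\delta n)$ edges near the path does not change that boundary condition. Making it genuinely free along the window via \eqref{eq:FE} costs $e^{-cn}$ with $c$ not small. ``Free up to small clusters'' is not free, and nothing in the paper tells you that local wirings at distance $\delta n$ leave the exponential rate unchanged; for $q>4$ boundary conditions matter a great deal. At scale $n$, this claim is essentially the statement you are trying to prove.

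Step~2 is also unsound as written. Bounding a connection routed through $z$ by $e^{-\|z-y\|_\xi-\|y+x_n-z\|_\xi}$ is a BK-type inequality, which is unavailable for FK-percolation with $q>1$. An excess of order $m$ would also need strict convexity of the Wulff shape, which is not established. For a norm with a flat facet the excess can vanish unless $y$ is raised to height of order $n$, and that costs $e^{-cn}$.

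The missing idea is to work at a \emph{fixed} scale and chain. Write $\zeta=\zeta_{\alpha,q}(\theta)$ and fix $\eps>0$. By the aiming hypothesis, for arbitrarily large $r$ there is $x$ in the upper half-plane with $\langle x,e_\theta\rangle\ge r$ and $\phi^0_{\bbL(\alpha),q}[0\lr x]\ge e^{-r/(\zeta-2\eps)}$. Since $\phi^0_{\bbL(\alpha),q}$ is the increasing limit of free box measures, there is $R=R(r)$ with $\phi^0_{\bbL(\alpha)\cap\La_R,q}[0\lr x]\ge e^{-r/(\zeta-3\eps)}$. Free boundary conditions are minimal, so the comparison now goes the right way.

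Fix a vertex $z$ at height at least $R$. Since $x$ has nonnegative vertical component, every box $\La_R(z+jx)$, $j\ge 0$, lies in the upper half-plane. By \eqref{eq:CBC} and \eqref{eq:FKG},
$\phi^0_{\bbL_+(\alpha),q}[0\lr\calH^\theta_{\ge n}]\ge\phi^0_{\bbL_+(\alpha),q}[0\lr z]\,\phi^0_{\bbL(\alpha)\cap\La_R,q}[0\lr x]^{\ell}$ with $\ell\le (n+C_0)/r$. The prefactor does not depend on $n$, so $\zeta^{\rm hp}_{\alpha,q}(\theta)\ge\zeta-3\eps$. This super-multiplicativity argument avoids any half-plane comparison at scale $n$, any BK-type decoupling, and any convexity input.
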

	
	\begin{proof}
		By \eqref{eq:CBC} and inclusion of events, 
		\begin{align}
			\phi^0_{\bbL_+( \alpha),q}\big[0 \lr\calH_{\geq n}^{\theta}\big] \leq \phi^0_{\bbL( \alpha),q}\big[0 \lr \calH_{\geq n}^{\theta}\big].
		\end{align}
		Taking the logarithm, dividing by $-n$ and taking the limit $n \to \infty$, we obtain \eqref{eq:zeta_zeta_hp}. 
	
		Assume now that $\theta$ is upper-half-plane aiming for $\bbL(\alpha)$.
		Fix $\eps >0$. Then, for $r$ larger than some threshold,
		\begin{align}
			 \phi^0_{\bbL( \alpha),q}\big[0 \lr \calH_{\geq r}^{\theta}\big] \geq   \exp\big(-\tfrac{r}{\zeta_{\alpha,q}(\theta) - \eps}\big).
		\end{align}
		
		Consider the point $x \in   \calH_{\geq r}^{\theta}$ that maximises $\phi^0_{\bbL(\alpha),q}[0 \lr x \,]$. 
		In light of \eqref{eq:exponential_decay_L(alpha)}, there exists a constant $C >0$ independent of $r$ such that
		\begin{align}
			 \phi^0_{\bbL( \alpha),q}[0 \lr x] \geq   \frac{1}{C r} \exp\big(-\tfrac{r}{\zeta_{\alpha,q}(\theta) - \eps}\big).
		\end{align}
		By further increasing $r$ and using the fact that $\theta$ is upper-half-plane aiming, we may consider $x$ to be in the upper-half-plane and such that 
		\begin{align}
			 \phi^0_{\bbL( \alpha),q}[0 \lr x] \geq   \exp\big(-\tfrac{r}{\zeta_{\alpha,q}(\theta) - 2\eps} \big).
		\end{align}
		Finally, we may find $R\geq 1$ large enough such that 
		\begin{align}\label{eq:0tox_zeta}
			\phi^0_{\La_R \cap \bbL( \alpha),q}[0 \lr x] \geq   \exp\big(-\tfrac{r}{\zeta_{\alpha,q}(\theta) - 3\eps}\big).
		\end{align}
				
		Now fix $z \in \bbL_+(\alpha)$ at a distance at least $R$ from the horizontal line.
		Then, for any $n$ sufficiently large, by \eqref{eq:FKG} and \eqref{eq:CBC}
		\begin{align}
			\phi^0_{\bbL_+(\alpha),q}\big[0 \lr \calH_{\geq n}^{\theta}\big] 
			&\geq \phi^0_{\bbL_+(\alpha),q}[0 \lr z] 	\phi^0_{\bbL_+(\alpha),q}[z \lr z + \ell x ]\\
			&\geq \phi^0_{\bbL_+(\alpha),q}[0 \lr z ] 	\phi^0_{\La_R \cap \bbL( \alpha),q}[0 \lr x]^{\ell}
		\end{align}
		when $\ell$ is a sufficiently large integer such that $z + \ell x  \in \calH_{\geq n}^{\theta}$. Due to the choice of $x$, 
		one may choose $\ell \leq \frac{n + C_0}{r}$ for some constant $C_0$ that depends on $\theta, r, x$, and $z$, but not on $n$. 
		Using \eqref{eq:0tox_zeta}, we conclude the existence of a constant $c_1 > 0$ independent of $n$ such that 
		\begin{align}
			\phi^0_{\bbL_+(\alpha),q}\big[0 \lr \calH_{\geq n}^{\theta}\big] 
			\geq c_1 \exp\big(-\tfrac{n}{\zeta_{\alpha,q}(\theta) - 3\eps}\big).
		\end{align}
		Taking the logarithm, dividing by $-n$ and taking $n$ to infinity, we conclude that 
		\begin{align}
			\zeta_{\alpha,q}^{\rm hp}(\theta) \geq  \zeta_{\alpha,q}(\theta)  - 3\eps.
		\end{align}
		Since $\eps > 0$ was chosen arbitrarily and in light of \eqref{eq:zeta_zeta_hp}, we conclude that the two quantities are equal. 
	\end{proof}

	\section{Universality: proof of Theorem~\ref{thm:universality}}\label{sec:universality}
	
	We briefly outline the idea behind proving Theorem~\ref{thm:universality}. Our aim will be to compare 
	$\zeta_{\alpha,q}^{\rm hp}(\theta)$ for different values of $\alpha$ and $\theta \neq 3\pi/2$ fixed. More precisely, we will show the following. 
	
	\begin{prop}\label{prop:universality}
		For all $\eps > 0$, there exists $q_0 > 4$ such that for $q \in (4, q_0]$, all $\alpha,\beta \in (\eps,\pi-\eps)$ and any $\theta \in [0,2\pi)$ with $\theta \neq 3\pi/2$,
		\begin{align}
			\bigg|\frac{\zeta_{\alpha,q}^{\rm hp}(\theta)}{\zeta_{\beta,q}^{\rm hp}(\theta)}-1\bigg| < \eps.
		\end{align}
	\end{prop}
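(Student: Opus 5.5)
We plan to compare both $\zeta^{\rm hp}_{\alpha,q}(\theta)$ and $\zeta^{\rm hp}_{\beta,q}(\theta)$ with the decay rate on the mixed half-plane lattice $\bbL_+(\bm\alpha)$, where $\bm\alpha$ alternates between $\alpha$ and $\beta$. Gradually pushing the $\beta$-tracks upward is done by repeated applications of $\bfS_{\rm odd}\circ\bfS_{\rm even}$, as in Corollary~\ref{cor:track_exchange_hp}. Concretely, we start from the half-plane lattice in which the first $N$ tracks have angle $\alpha$ and all higher tracks alternate, or have angle $\beta$. By Corollary~\ref{cor:top_does_not_matter}, events depending only on the bottom tracks do not see the top, and connection events to $\calH^\theta_{\geq n}$ can be localised in a strip of height $O(n)$, up to errors that vanish on the exponential scale. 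A sequence of track exchanges transports the block of $\alpha$-tracks upward through the $\beta$-tracks. Each exchange preserves the law of the free half-plane measure, and it preserves connections between vertices away from the exchanged tracks. The effect on the event $\{0\lr\calH^\theta_{\geq n}\}$ is therefore only through the displacement of the extremal point ${\rm E}_\theta$ of the cluster of $0$.

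The key quantitative step is to control this displacement. After $K\asymp n$ sweeps, the cluster of the origin in the final lattice should reach $\calH^\theta_{\geq n(1\pm\delta)}$ whenever it reached $\calH^\theta_{\geq n}$ in the initial one, with $\delta\to 0$ as $q\searrow4$. This gives $\zeta^{\rm hp}$ of the $\alpha$-lattice within a factor $1\pm\delta$ of that of the mixed or $\beta$-lattice, and symmetrically for the other comparison. To estimate the total displacement, we condition on the large-scale cluster reaching $\calH^\theta_{\geq n}$. Then we argue that, locally around its extremum, the configuration looks like the IIC measure $\phi^{{\rm IIC},\theta}_{\bbL_{\rm mix},4}$ at a mesoscopic scale $r$.

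The argument needs two ingredients here. The first is continuity in $q$: for $q$ close to $4$, the free measure $\phi^0_{\cdot,q}$ restricted to boxes of size $R$ is close in total variation to $\phi_{\cdot,4}$. This should follow because the correlation length diverges as $q\searrow4$, and finite-volume measures depend continuously on $q$. Hence the local picture around the extremum, conditioned on the three-arm event up to scale $R$, is close to the $q=4$ IIC. Proposition~\ref{prop:IIC mixing at q=4} and Lemma~\ref{lem:good flower domain} are used to transfer this to the conditional law.

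The second ingredient is the vanishing drift \eqref{eq:IIC drift is null}. Together with the locality in Proposition~\ref{prop:track-exchange operator}, which gives exponential tails for single increments, it shows that each sweep changes ${\rm E}_\theta$ by an amount with conditional expectation $o(1)$ as $q\searrow4$. Summing over $K\asymp n$ sweeps with a martingale concentration bound, the total displacement is $o(n)$ with probability not exponentially small in $n$. This suffices for the rates, since we only compare exponential rates.

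The main obstacle is the extremum-localisation step under a conditioning on an exponentially unlikely event. The local environment of the extremum at every sweep must be IIC-like, uniformly in $n$ and in the history, and the argument must avoid arm exponent estimates that are valid only at $q=4$. We would try to handle this by a renormalisation in blocks of scale $R=R(q)\ll\xi$. On each block, the conditioning is replaced by a comparison with the $q=4$ IIC through \eqref{eq:quasi_multiplicativity} and \eqref{eq:generic 3-arm}. Bad steps, where the flower domain is not good, are shown to form a sparse set with summable contribution, and their displacements are bounded by the exponential locality tails.

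Finally, the bound is uniform in $\alpha,\beta\in(\eps,\pi-\eps)$ since every constant used is. It also holds for all $\theta\neq3\pi/2$: near $\theta=3\pi/2$ one uses the lower boundary, but there the half-plane rate is still comparable via the same argument, with the boundary handled by Proposition~\ref{prop:hp_measure}.
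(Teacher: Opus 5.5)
\textbf{Verdict: there is a genuine gap.} Your overall architecture matches the paper's: you couple free half-plane measures by track exchanges, approximate the environment of ${\rm Ext}(\sfC_t)$ by the $q=4$ IIC using continuity in $q$, and then invoke \eqref{eq:IIC drift is null} and locality. But the step you yourself flag as the main obstacle is left open, and the proposed ``renormalisation with sparse bad steps'' does not close it.

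A martingale or concentration bound over $K\asymp n$ sweeps needs a lower bound on the drift conditionally on the whole past of the extremal coordinates. That past is a global event which also depends on the configuration near the current extremum. None of the available inputs (IIC mixing, flower domains, continuity of finite-volume measures in $q$) controls the local environment under such a conditioning.

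The missing idea is that the history is never needed. One proves only $\bbE[\Delta_t{\rm E}\mid {\rm E}(\sfC_t)\in[n\eps,(n+1)\eps)]\ge-\delta$ (Proposition~\ref{prop:conditional_drift}). This is done by decomposing over $z={\rm Ext}(\sfC_t)$, and the rounding avoids degenerate conditionings. One then replaces $e_t=\lfloor {\rm E}(\sfC_t)/\eps\rfloor\eps$ by a Markov chain $\tilde e_t$ with the same one-time marginals. It is obtained by resampling $\om_t$ from its law conditioned on the current rounded value before applying $\bfS_{t+1}\circ\bfS_t$. For $\tilde e$, stopped when it drops below $n/2$, the process $\tilde e_{2(t\wedge\tau)}+(\delta+\eps)t$ is a submartingale. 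Optional stopping, the deterministic bound $\Delta_t{\rm E}\le 4$ and Markov's inequality then give $\bbP[{\rm E}(\sfC_{2N})\ge(1-\eta)n]\ge\eta\,\bbP[{\rm E}(\sfC_0)\ge n]$ (Corollary~\ref{cor: extremum barely moves}). This uses only the marginals at times $0$ and $2N$, only a one-sided drift bound, and no concentration.

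Second, \eqref{eq:IIC drift is null} applies only where the lattice in $\La_R(z)$ coincides with $\bbL_{\rm mix}$, cell structure included. In your scheme the extremum may sit near an interface between the transported block and the neighbouring tracks, or near the horizontal axis once the mixed region reaches it. There nothing but the deterministic bound $\Delta_t{\rm E}\ge-4$ controls the increment, and you never address this.

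The paper handles it in Lemma~\ref{lem:extremum_close_to_interface}: conditionally on the rounded value of ${\rm E}(\sfC_t)$, the extremum lies within $R$ of an interface (but far from the axis) with probability $O(C^{-c})$, hence below $\delta$ for $C$ large.
\begin{itemize}
\item It explores a good flower domain of size $CR$ around $z$.
\item It compares the weight $\lesssim d^{-1+c}(CR)^{-1-c}$ of $\{{\rm Ext}=z\}$ at distance $d$ from the interface with the total weight $\gtrsim (CR)^{-1}$ of competing extremum positions in the rounded strip.
\item The comparison is done at $q=4$ and transferred to $q$ near $4$ by continuity on this finite domain.
\item The exponent $1+c>1$ in \eqref{eq:generic 3-arm} is essential.
\end{itemize}
The times when the mixed block is near the axis are discarded and absorbed by the deterministic bound.

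There are also two smaller issues.
\begin{itemize}
\item Your initial lattice ($N$ tracks of angle $\alpha$ followed by $\beta$-tracks or alternating tracks) is not periodic. Proposition~\ref{prop:hp_measure} and Corollaries~\ref{cor:track_exchange_hp} and~\ref{cor:top_does_not_matter} require periodic sequences. This is why the paper uses $2N$-periodic blocks and omits the exchanges $\bfT_i$ with $i-1\in 2N\bbZ$.
\item Total-variation closeness of the unconditioned infinite-volume measures is not what is needed, given the exponentially rare conditioning. One uses continuity in $q$ of $\phi^\xi_{\La_R(z),q}[\,\cdot\mid H]$, uniformly in the boundary condition $\xi$ and the non-degenerate event $H$. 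The divergence of the correlation length plays no role.
\end{itemize}
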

	
	We will see in Section~\ref{sec:conclusion_universality} how to pass from the half-plane connection rates to the full-plane connection rates, and hence prove Theorem~\ref{thm:universality}.
	The rest of the section is dedicated to proving Proposition~\ref{prop:universality}.
	
	The aim is to show that connection probabilities in $\om \sim \phi^0_{\bbL_+(\alpha),q}$ and $\om' \sim \phi^0_{\bbL_+(\beta),q}$ are close to each other when $q > 4$ is taken sufficiently close to $4$. 
	To achieve this, we couple these two configurations and construct a sequence of intermediate configurations, each obtained from the previous one by a sequence of track-exchanges.
	
	In this coupling, we keep track of the extremal coordinate ${\rm E}_\theta(\sfC)$ of the cluster $\sfC$ of the origin in direction $e_\theta$. 
	By taking $q > 4$ close enough to $4$, we argue that the effect of track-exchanges on ${\rm E}_\theta(\sfC)$ are almost identical in law to $\Delta^{\rm IIC}{\rm E}_\theta$, 
	and therefore have vanishingly small expectation. 
	This will allow us to compare the probabilities in the initial and final configurations $\omega$ and $\omega'$ to have ${\rm E}_\theta(\sfC) \geq n$ for large values of $n$. 
	Ultimately, our goal is to show that the exponential rate of decay of these two quantities is almost equal.

	\subsection{The coupling}
	
	We will now fix some notation necessary for the proof of Proposition~\ref{prop:universality} and describe the coupling in detail. 
	Fix two angles $\alpha, \beta \in (0,\pi)$ and $N \geq 1$ even. 
	All constants below will be uniform in $\alpha$ and $\beta$ in compacts of $(0,\pi)$ and in $N$.  
	
	Write $\bbL_0$ for the lattice $\bbL_+(\bm \alpha)$, where $\bm \alpha$ is the sequence given by 
	\begin{align}
		\alpha_i = 
		\begin{cases}
			\alpha & \text{ for $2kN < i \leq (2k+1)N$, $k \geq 0$,}\\
			\beta &\text{ otherwise.} 
		\end{cases}
	\end{align}
	We can partition the lattice into blocks of $N$ tracks of constant angle. By means of the track-exchange operator, we can define a sequence of lattices $(\bbL_t)_{t \geq 0}$ where the blocks of angle $\alpha$ are eventually exchanged with those of angle $\beta$.
	
	Recall that we write $\bfT_i$ for the track-exchange operator exchanging the tracks $t_i$ and $t_{i-1}$. If $t_i$ and $t_{i-1}$ have the same transverse angle, we set $\bfT_i = {\rm id}$. 
	For $t \geq 1$, apply the following sequence of track exchanges to $\bbL_{t}$ in order to obtain $\bbL_{t+1}$:
	\begin{align}
		\bfS_t = 
		\begin{cases}
		\bfT_3 \circ  \bfT_5 \circ  \dots \bfT_{2N-1} \circ \bfT_{2N + 3} \circ \bfT_{2N + 5} \circ\dots    &\quad \text{if $t$ is odd,}\\
		 \bfT_2 \circ \bfT_4 \circ \dots  &\quad \text{if $t$ is even.}
		\end{cases}
	\end{align}
	That is, for $t \geq 1$, set $\bbL_{t} = \bfS_{t-1} \circ \dots\circ \bfS_0(\bbL_0)$. See Figure~\ref{fig: block-exchange} for an illustration; note that since $N$ is even, $\bfS_0$ is trivial and $\bbL_1 = \bbL_0$.

	The transformations $\bfS_{t}$ for odd times $t$ contains all track-exchanges $\bfT_i$ with $i$ odd, {\em except} for $i -1 \in  2N\bbZ$. 
	This ensures that there is never any exchange of tracks between blocks of $2N$ successive tracks, and ultimately ensures that all lattices $\bbL_t$ are $2N$-periodic.
	
	Each period is formed of a \emph{mixed block} of alternating tracks of angles $\alpha$ and $\beta$ sandwiched between an $\alpha$-block and a $\beta$-block; the sizes of the blocks depend on $t$ and may be null. Lines separating different blocks of the lattice (including the horizontal axis) will be referred to as \emph{interfaces}.
	
	After $2N$ transformations, the mixed block disappears and the $\beta$-block and $\alpha$-block have been exchanged completely. We refer again to Figure~\ref{fig: block-exchange}.

	\begin{figure}
		\centering
		\footnotesize
		
	\def\svgwidth{1\columnwidth}
\begingroup%
  \makeatletter%
  \providecommand\color[2][]{%
    \errmessage{(Inkscape) Color is used for the text in Inkscape, but the package 'color.sty' is not loaded}%
    \renewcommand\color[2][]{}%
  }%
  \providecommand\transparent[1]{%
    \errmessage{(Inkscape) Transparency is used (non-zero) for the text in Inkscape, but the package 'transparent.sty' is not loaded}%
    \renewcommand\transparent[1]{}%
  }%
  \providecommand\rotatebox[2]{#2}%
  \newcommand*\fsize{\dimexpr\f@size pt\relax}%
  \newcommand*\lineheight[1]{\fontsize{\fsize}{#1\fsize}\selectfont}%
  \ifx\svgwidth\undefined%
    \setlength{\unitlength}{505.74921375bp}%
    \ifx\svgscale\undefined%
      \relax%
    \else%
      \setlength{\unitlength}{\unitlength * \real{\svgscale}}%
    \fi%
  \else%
    \setlength{\unitlength}{\svgwidth}%
  \fi%
  \global\let\svgwidth\undefined%
  \global\let\svgscale\undefined%
  \makeatother%
  \begin{picture}(1,0.23032333)%
    \lineheight{1}%
    \setlength\tabcolsep{0pt}%
    \put(0,0){\includegraphics[width=\unitlength,page=1]{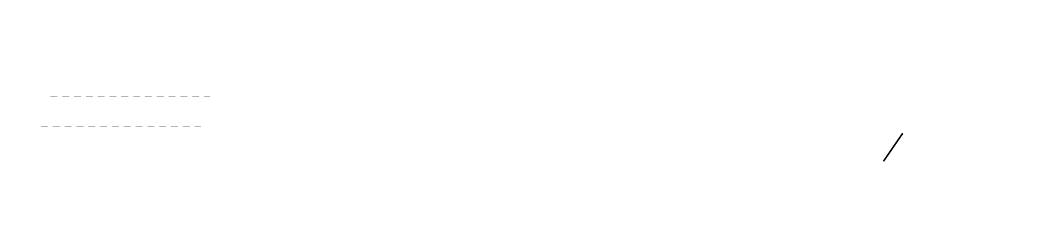}}%
    \put(0.00055645,0.13369075){\color[rgb]{0,0,0}\makebox(0,0)[lt]{\lineheight{1.25}\smash{\begin{tabular}[t]{l}$t_{N+1}$\end{tabular}}}}%
    \put(0.01366667,0.10696922){\color[rgb]{0,0,0}\makebox(0,0)[lt]{\lineheight{1.25}\smash{\begin{tabular}[t]{l}$t_N$\end{tabular}}}}%
    \put(0,0){\includegraphics[width=\unitlength,page=2]{block-exchange.pdf}}%
  \end{picture}%
\endgroup%

		\caption{The track-exchanges performed on a single period. From left to right: The initial lattice $\bbL_0$ with tracks of angle $\alpha = \pi/2$ at the bottom and $\beta < \pi/2$ at the top. The first two tracks to be exchanged are marked with dashed grey lines. Applying $\bfS_t \circ \dots \circ \bfS_0$ transforms $\bbL_0$ into $\bbL_{t+1}$ and a mixed block (colored in red) starts appearing in the middle. The cells in the mixed block at even timesteps are centred around rhombi with angle $\beta$. After more transformations, a $\beta$-block starts forming at the bottom and an $\alpha$-block at the top (both marked in blue). By time $2N$, the $\beta$-block and the $\alpha$-block have been exchanged completely. The interfaces of each lattice are marked with thick red lines.}
		\label{fig: block-exchange}
	\end{figure}
	
	Fix $q > 4$. We will associate a sequence of configurations $(\om_t)_{t \geq 0}$ to the lattices $(\bbL_t)_{t \geq 0}$ with the property that $\om_t \sim \phi^0_{\bbL_t,q}$ for all $t \geq 0$. This produces a \emph{coupling} of the measures $(\phi^0_{\bbL_t,q})$. 
	The coupling is constructed as follows. First, sample $\om_0$ according to $\phi^0_{\bbL_0,q}$. Then, for $t \geq 0$, assuming that $\om_0, \dots, \om_{t}$ are already defined, set
	\begin{align}
		\om_{t+1} = \bfS_{t}(\om_{t}).
	\end{align}
	Corollary~\ref{cor:track_exchange_hp} ensures that $\om_{t}$ indeed has law $\phi^0_{\bbL_{t},q}$ for all $t\geq 1$.
	We write $\bbP$ for the probability measure governing the random sequence $(\om_t)_{t \geq 0}$.
	
	Fix an angle $\theta \in [0,2\pi) \setminus \{3\pi/2\}$. For each configuration $\om_t$, write $\sfC_t$ for the cluster of the origin in $\om_t$ and recall that ${\rm Ext}_\theta(\sfC_t)$ (resp.\ ${\rm E}_\theta(\sfC_t)$) denotes the extremum (resp.\ extremal coordinate) of $\sfC_t$ in direction $e_\theta$. Since $\theta$ remains fixed throughout, we will henceforth omit it from the notation.

	We will be interested in the evolution of $({\rm E}(\sfC_t))_{t \geq 0}$. 
	In order to keep track of the dynamics of the process, we define, for $t \geq 0$ even, the increments
	\begin{align}
		\Delta_t {\rm E} = {\rm E}(\sfC_{t+2}) - {\rm E}(\sfC_t).
	\end{align}
	
	The following estimate, bounding the expected increment, conditionally on the extremal coordinate being large, is a key step in the proof of Proposition~\ref{prop:universality}.
	
	\begin{prop}\label{prop:conditional_drift}
		For each $t\geq 0$ even, $\Delta_t {\rm E}$ is a.s.\ bounded by 4. 
		Moreover, for any $\delta,\eps > 0$, there exists $q_0 > 4$ such that for any $q \in (4,q_0]$, $n, N$ sufficiently large and any $1 \leq t \leq N(1-\delta)$ or $(1+\delta) N \leq t \leq 2N$, it holds that
		\begin{align}\label{eq:conditional_drift}
			\bbE \big[\Delta_t {\rm E}\,|\, {\rm E}(\sfC_t) \in [n\eps,(n+1)\eps)\big] \geq -\delta.
		\end{align}
		Moreover, $q_0$ may be chosen independently of $\theta$,  $n$ and $N$ (both sufficiently large) and uniformly in $\alpha,\beta$ in compacts of $(0,\pi)$.
	\end{prop}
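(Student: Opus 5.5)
The plan is to treat the two claims separately. The almost sure bound $|\Delta_t {\rm E}|\le 4$ is deterministic. For the drift bound, I would localise around the extremum, compare to $q=4$ as $q\searrow4$, and then invoke the vanishing IIC drift \eqref{eq:IIC drift is null}.

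\textbf{Step 1: the deterministic bound.} Two consecutive transformations $\bfS_{t+1}\circ\bfS_t$ modify edges only inside the tracks being exchanged. By Proposition~\ref{prop:track-exchange operator}, vertices outside the exchanged tracks keep their connectivity. A cluster can therefore only gain or lose vertices lying in cells adjacent to the current extremal cell. Since cells have bounded size, the lattice points associated with cells move by at most a bounded number of cell steps. Hence ${\rm E}(\sfC_{t+2})$ and ${\rm E}(\sfC_t)$ differ by a bounded amount, and a count with the rhombi of unit side gives the constant $4$.

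\textbf{Step 2: localisation around the extremum.} Condition on ${\rm E}(\sfC_t)\in[n\eps,(n+1)\eps)$ and write $z={\rm Ext}(\sfC_t)$. Fix a large scale $r$ and a larger scale $R$. The restriction on $t$ (away from $N$ and from $0$ or $2N$) should serve to place $z$ in a region of $\bbL_t$ that looks locally like a single lattice type. The intended reading is that, with $N$ large and $\delta N\gg R$, the extremum lies with high probability inside the mixed block at distance at least $R$ from the interfaces. I expect justifying this to be one of the harder points, since it needs a control on where the extremum typically sits relative to the interfaces; the fallback is to lower bound contributions from extrema inside pure blocks separately. The mixed block is a translate of $\bbL_{\rm mix}$ with the right cell structure.

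The increment $\Delta_t{\rm E}$ is determined, up to probability $e^{-cr}$, by $\om_t$ on $\La_r(z)$ together with the extra randomness. This follows from the locality in Proposition~\ref{prop:track-exchange operator} and from Step 1, which guarantees that only the neighbourhood of the extremum matters. If $z$ lies near an interface, or the local geometry is not purely mixed, I use only the bound $\Delta_t{\rm E}\ge -4$ and show that this situation has conditional probability $O(\delta)$.

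\textbf{Step 3: comparison with $q=4$, the main obstacle.} I must show that, conditionally on the large-scale configuration and on $z$ being the extremum, the law of $\om_t$ on $\La_r(z)$ is close to $\phi^{{\rm IIC},\theta}_{\bbL_{\rm mix},4}$. The plan has three parts.
\begin{itemize}
\item Decompose according to $\om_t$ outside $\La_R(z)$ and apply \eqref{eq:SMP}. The conditional law inside $\La_R(z)$ is then a finite-volume measure on a box of fixed size $R$, with some boundary condition and the conditioning $\{{\rm Ext}=z\}$.
\item The finite-volume measures depend continuously on $q$, uniformly in the boundary condition and in the angles, because the weights $p_e$ are continuous at $q=4$ and the box is finite. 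For $q$ close to $4$ they are therefore within $\eta$ of the corresponding $q=4$ measures.
\item The conditioning event has probability bounded below uniformly. This comes from finite energy, and more quantitatively from Lemma~\ref{lem:good flower domain}, which gives a good flower domain with positive probability. Hence closeness of the measures transfers to closeness of the conditioned measures.
\end{itemize}
Proposition~\ref{prop:IIC mixing at q=4} then replaces the $q=4$ conditioned measure by the IIC measure, with error $C(r/R)^{c_{\rm IIC}}$. Because of the conditioning the event is not literally local, so I would first reveal the flower domain between $\La_{2R}$ and $\La_R$ and work inside it, where the conditioning is the three-arm/extremum event.

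\textbf{Step 4: conclusion.} Combining the steps,
\begin{align}
\bbE[\Delta_t{\rm E}\,|\,\cdot\,]\ \ge\ \bbE[\Delta^{\rm IIC}{\rm E}_\theta]-4\big(e^{-cr}+C(r/R)^{c_{\rm IIC}}+\eta+O(\delta)\big).
\end{align}
By \eqref{eq:IIC drift is null} the first term is $0$. Choosing $r$, then $R$, then $q_0$ (and finally $N,n$ large) makes the right-hand side at least $-\delta$, after renaming $\delta$.

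All the estimates used are uniform in $\theta$ and in $\alpha,\beta$ on compacts of $(0,\pi)$. Note that the continuity in $q$ is taken at fixed $R$, so $q_0$ is independent of $n$ and $N$.
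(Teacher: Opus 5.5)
Your handling of extrema deep inside the mixed block matches the paper's: IIC mixing at $q=4$, continuity in $q$ at fixed $R$, locality of $\bfT_i$, and $\bbE[\Delta^{\rm IIC}{\rm E}_\theta]=0$. The reduction to that case, however, does not work.

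\textbf{Extrema outside the mixed block.} The extremum is \emph{not} typically in the mixed block. For small $t$ that block is only a few tracks thick in each period, so the extremum lies in a pure block with probability close to one. Your claim that ``non-mixed'' local geometry has conditional probability $O(\delta)$ is therefore false.

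Pure-block extrema are handled by a deterministic fact you do not state. Since $\bfT_i={\rm id}$ between tracks of equal angle, a $z$ in a pure block at distance at least $4$ from every interface has its cell untouched by $\bfS_{t+1}\circ\bfS_t$. That cell still meets $\sfC_{t+2}$, so $\Delta_t{\rm E}\ge 0$.

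Only extrema within distance $R$ of an interface must be shown to be rare. This is the genuinely hard step, and you leave it open. The paper explores the flower domain $\calF$ from $\La_{2CR}(z)$ to $\La_{CR}(z)$; it is good with probability at least $c$, transferred from $q=4$ by continuity. It then compares two quantities:
\begin{itemize}
\item $\bbP[{\rm Ext}(\sfC_t)=z\mid\calF]\lesssim d^{-2}(d/CR)^{1+c}$, using the periodic three-arm exponent $2$ below the distance $d$ to the interface and only the generic exponent $1+c$ above it;
\item $\bbP[{\rm E}(\sfC_t)\in[n\eps,(n+1)\eps)\mid\calF]\gtrsim (CR)^{-1}$, which comes from $\asymp CR$ candidate points far from the interface. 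This is where the $\eps$-rounding is needed.
\end{itemize}
Summing $d^{-1+c}(CR)^{-c}$ over $d\le R$ gives $O(C^{-c})<\delta$. The strict gap between the exponents $1+c$ and $2$ is the idea your proposal is missing.

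This also explains the restriction on $t$, which you misread: $t$ near $0$ or $2N$ is allowed. Excluding $t$ near $N$ keeps a pure block of height at least $CR$ above the horizontal axis. Extrema near an interface are then at distance at least $CR$ from the free half-plane boundary, where the flower-domain and arm estimates apply.

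\textbf{Localisation in Step 2.} This second error is fixable. Step 1 bounds the change of the $e_\theta$-coordinate, but it does not localise the new extremum in space. A part of $\sfC_t$ far from $z$, with coordinate within $4$ of ${\rm E}(\sfC_t)$, may become extremal, so $\Delta_t{\rm E}$ is not determined by $\om_t$ on $\La_r(z)$. You should use only the one-sided bound ${\rm E}(\sfC_{t+2})\ge{\rm E}(\tilde\sfC^{\rm IIC})$ outside error events. Among these events you must include that the extremum of $\tilde\sfC^{\rm IIC}$ is not in $\La_{r/2}(z)$. Its probability is $O(r^{-1})$ by a union bound with the half-plane three-arm exponent, and this term is missing from your error budget in Step 4.
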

	
	Note that we do not follow the precise evolution of ${\rm E}(\sfC_t)$, but rather a ``rounding'' of ${\rm E}(\sfC_t)$ at an arbitrary precision. 
	Indeed, the exact value of  ${\rm E}(\sfC_t)$ may dictate the exact position of ${\rm Ext}(\sfC_t)$, which may lead to a degenerate conditioning. 
	See the proof of Lemma~\ref{lem:extremum_close_to_interface} for the use of this rounding. 
	
	We excluded $t$ between $(1-\delta)N$ and $(1+\delta)N$ from \eqref{eq:conditional_drift} for convenience. Indeed, in these time steps, the mixed block comes close to the horizontal axis, which entails some additional complications. While we believe~\eqref{eq:conditional_drift}  to also apply in this case, we omit it as it is not strictly necessary for the rest of the proof.

	As a consequence of Proposition~\ref{prop:conditional_drift}, 
	we establish that the coupling is indeed such that the corresponding point-to-hyperplane connection probabilities in $\bbL_0$ and $\bbL_{2N}$ 
	differ only negligibly if $q$ is chosen sufficiently close to $4$ and the distance $n$ to the hyperplane is large enough.
	
	\begin{cor}\label{cor: extremum barely moves}
		For any $\eta > 0$ and $C \geq 1$, there exists $q_0 > 4$ such that for $q \in (4, q_0]$ and all $n$ sufficiently large, if we set $N = Cn$, we have 
		\begin{align}\label{eq:extremum barely moves}
			\bbP\big[{\rm E}(\sfC_{2N}) \geq (1-\eta)n\big] \geq \eta \bbP\big[{\rm E}(\sfC_0) \geq n\big].
		\end{align}
		Moreover, $q_0$ may be chosen independently of $\theta$ and uniformly in $\alpha$ and $\beta$ in compacts of $(0,\pi)$. 
	\end{cor}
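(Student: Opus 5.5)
We plan to derive the corollary from Proposition~\ref{prop:conditional_drift} by a supermartingale-type argument for the process $X_s \coloneqq {\rm E}(\sfC_{2s})$, $s = 0,1,\dots,N$. Set $\delta$ small in terms of $\eta$ and $C$ (say $\delta \ll \eta^2/C$), and pick $\eps$ (the rounding precision) equal to $1$. By Proposition~\ref{prop:conditional_drift}, increments are bounded by $4$ in absolute value. Summing over the rounding bins, for every even $t$ outside the window $((1-\delta)N,(1+\delta)N)$ and every level $m \geq n/2$ we get $\bbE[\Delta_t{\rm E} \mid {\rm E}(\sfC_t)\in[m,m+1)] \geq -\delta$. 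Here we take $q_0$ from the proposition, which is uniform in $n,N$ large. For the roughly $\delta N$ steps inside the excluded window we only use $\Delta_t{\rm E} \geq -4$, so those steps cost at most $4\delta N = 4\delta C n$ in total displacement. This is $\leq \eta n/4$ once $\delta$ is small.

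The natural difficulty is that the drift bound is conditional only on the current level, not on the whole past, so $X$ is not literally a submartingale. To handle this, we will not use a martingale; we will work with the one-dimensional marginal. Define $f(x) = \min\{(x - (1-\eta)n)_+, \eta n\}$, or more conveniently track $\bbE[(X_s - a)_+ \wedge b]$. Since each step only depends on the current level through the conditional expectation, we have $\bbE[X_{s+1}\ind_{X_s\geq n/2}] \geq \bbE[(X_s-\delta)\ind_{X_s\geq n/2}]$. Because increments are bounded by $4$, truncation effects near the threshold $n/2$ are controlled.

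Concretely, we will show by induction on $s$ that
\begin{align}
\bbE\big[(X_s - \tfrac{n}{2})_+\big] \geq \bbE\big[(X_0-\tfrac n2)_+\big] - s\delta\,\bbP\big[X_s \geq \tfrac n2 - 4\big] - \text{(window cost)},
\end{align}
using the conditional drift on the event $\{X_s \geq n/2\}$ and boundedness of increments. We then bound $\bbP[X_s\geq n/2-4]$ crudely by $1$ times a normalising quantity. To make this comparable to $\bbP[X_0\geq n]$, we need to normalise. We plan to use that all $\omega_t$ are free half-plane measures on lattices with angles in a compact set. By \eqref{eq:FKG}, finite energy, and the exponential decay \eqref{eq:exponential_decay_L(alpha)} uniformly in the angles, $\bbP[X_s \geq k]$ is comparable across $s$ up to $e^{O(k/\zeta)}$ factors, and $\bbP[X_0 \geq n/2]$ is at most $e^{cn/\zeta}\bbP[X_0\geq n]$.

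\textbf{Main obstacle.} This normalisation is the main obstacle: the bound must hold with the multiplicative factor $\eta$, which requires avoiding losses exponential in $n$. We would therefore instead run the argument with the threshold $(1-\eta)n$ and the function $g(x) = (x-(1-\eta)n)_+ \wedge 2\eta n$. Then $\bbE[g(X_0)] \geq \eta n\,\bbP[X_0\geq n]$, while the total drift loss is at most $N\delta\,\bbP[\exists s: X_s \geq (1-\eta)n - 4]$. That last probability should be bounded by a constant times $\bbP[X_0 \geq (1-\eta)n]$, via monotonicity of each step in law (track exchanges preserve connections outside the exchanged tracks), and further relate to $\bbP[X_0\geq n]$ at cost $e^{O(\eta n/\zeta)}$. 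Since $\zeta\to\infty$ as $q\searrow 4$, shrinking $q_0$ makes this factor harmless for $n$ in the relevant range. Finally, $\bbE[g(X_N)] \leq 2\eta n\,\bbP[X_N\geq (1-\eta)n]$, which gives \eqref{eq:extremum barely moves} after adjusting constants ($\eta\to\eta/4$).
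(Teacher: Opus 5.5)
There is a genuine gap. Your reduction to Proposition~\ref{prop:conditional_drift}, the $4\delta N$ bound for the excluded window, and your diagnosis that the drift is only conditional on the current level are all correct. The way you propose to get around that last point, however, fails.

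\textbf{The quantifier order defeats the normalisation.} Here $q_0$ is fixed first, and \eqref{eq:extremum barely moves} must then hold for \emph{all} large $n$; the proof of Proposition~\ref{prop:universality} sends $n\to\infty$ at fixed $q$. Any bound on unconditioned marginals such as $\bbE[g(X_s)]$ incurs a drift loss of order $\delta\sum_s\bbP[X_s\geq(1-\eta)n]$. Against this you only gain $\eta n\,\bbP[X_0\geq n]$. At fixed $q$ the ratio $\bbP[X_0\geq(1-\eta)n]/\bbP[X_0\geq n]$ grows roughly like $e^{\eta n/\zeta}$, so the loss dominates once $n\gg\zeta/\eta$. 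Shrinking $q_0$ only moves this threshold; it does not remove it.

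\textbf{The comparison across lattices is unfounded.} Your claim that $\bbP[\exists s\colon X_s\geq(1-\eta)n-4]$ is at most a constant times $\bbP[X_0\geq(1-\eta)n]$ ``by monotonicity of each step in law'' has no basis. Track exchanges do not move ${\rm E}$ monotonically, and comparing tails of ${\rm E}$ across the different lattices $\bbL_s$ is essentially the universality statement being proved. A lesser issue: the cap $\wedge\,2\eta n$ makes $g$ concave there, so a drift bound on $X$ does not transfer to $g(X)$ near the cap.

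\textbf{The missing idea.} You need to condition on $\{X_0\geq n\}$ and make the drift bound survive this conditioning. The paper does this by Markovising the rounded process. It builds $\tilde e$ whose transition from state $x$ at time $t$ is the law of $e_{t+2}$ given $e_t=x$: resample $\om_t$ given its rounded level, then apply $\bfS_{t+1}\circ\bfS_t$. This process has the same one-time marginals as $e$. Its drift given the entire past is at least $-\delta-\eps$ above level $n/2$, and this remains true under $\bbP[\,\cdot\mid\tilde e_0\geq n]$. The argument then proceeds as follows.
\begin{enumerate}
\item Apply optional stopping at $\tau=\inf\{t\colon\tilde e_{2t}<n/2\}$, together with your window bound.
\item Apply a reverse Markov inequality using that increments are at most $4$.
\item This yields $\bbP[\tilde e_{2N}>(1-\eta)n\mid\tilde e_0\geq n]\geq\eta$.
\item Equality of marginals transfers this to ${\rm E}(\sfC_{2N})$, with no exponential loss.
\end{enumerate}
In your marginal language, let $K_t(x,\cdot)$ be the law of $e_{t+2}$ given $e_t=x$ and $\mu_0$ the law of $e_0$ given $e_0\geq n$. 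You run the drift argument for the chain with kernels $K_t$ started from $\mu_0$. You then use $\mathrm{law}(e_{2N})=\mathrm{law}(e_0)K_0K_2\cdots K_{2N-2}\geq\bbP[e_0\geq n]\,\mu_0K_0K_2\cdots K_{2N-2}$.
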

	
	\begin{proof}
		Fix $\eta > 0$ and $C \geq 1$. With no loss of generality, we may assume $\eta < 1/2C$. 
		Choose constants $\eps,\delta > 0$ so that
		\begin{align}
			5\delta + \eps < \tfrac{\eta}{2C} \quad \text{and} \quad \eta \leq \tfrac{5 \delta + \eps}{2(1 + 5 \delta + \eps)}.
		\end{align}
		Then choose $q_0 > 4$, sufficiently close to $4$ so that Proposition~\ref{prop:conditional_drift} holds for the chosen values of $\delta, \eps$. 
		Fix now $n$ sufficiently large that~\eqref{eq:conditional_drift} applies for all values above $\eps m\geq n/2$ and set $N = Cn$. 
		
		Define the $\eps$-rounding $e_t = \lfloor {\rm E}(\sfC_t)/\eps \rfloor \cdot \eps$ of ${\rm E}(\sfC_t)$ 
		and write $\Delta_t e = e_{t+2}-e_t$ for all $t \geq 0$ even. Then, Proposition~\ref{prop:conditional_drift} states that, for any $0 \leq t \leq 2N$ even, except if $(1-\delta)N \leq t \leq (1+\delta)N$,
		\begin{align}\label{eq:e_increment}
			\bbE \big[\Delta_t e \,\big|\, e_t = \eps m \big] \geq -\delta -\eps
		\end{align}
		for all $q \in (4,q_0]$ and $\eps m \geq n/2$.

		Note that \eqref{eq:e_increment} does not provide a lower bound on the expected increment $\Delta_t e$ given the full past of the process.
		As our ultimate goal is to study the process under the conditioning $e_0 \geq n$, this may produce difficulties. 		
	 	To circumvent them, we will modify the process $(e_{2t})_{t \geq 0}$ to render it Markov, while maintaining the marginal laws. 
		This is a standard procedure: define a process $(\tilde e_{2t})_{t \geq 0}$ on a potentially extended probability space as follows. 
		\begin{itemize}
		\item 
		Let $\tilde e_0 = e_0$. 
		\item For any $t \geq 0$, let $\om_{t+1/2}$ be a configuration with the same law as $\om_t$, sampled independently of the past, but such that $\lfloor {\rm E}(\sfC_{t + 1/2})/\eps\rfloor \cdot \eps = \tilde e_t$. 
		 \item Define $\omega_{t+2} =  \bfS_{t+1} \circ \bfS_t(\om_{t+1/2})$ and use it to compute $\tilde e_{t+2}$. 
		\end{itemize} 
		This resampling procedure renders the process $(\tilde e_{2t})_{t \geq 0}$ Markov. 
		Additionally, it is easy to check that, for any fixed, even $t$, $e_t$ and $\tilde e_t$ have the same law. 
		Finally, \eqref{eq:e_increment} also applies to the process $(\tilde e_t)_{t \geq 0}$.
		
		Set $\tau = \inf\{ t \geq 0: \tilde e_{2t} < n/2\}$. 
		By \eqref{eq:e_increment}, we conclude that $\big( \tilde e_{2(t\wedge \tau)} + (\delta + \eps)t \big)_{0 \leq 2t\leq (1-\delta)N}$
		and $\big( \tilde e_{2(t\wedge \tau)} + (\delta + \eps)t \big)_{(1+\delta)N \leq 2t\leq 2N}$
		are submartingales with bounded increments. 
		Furthermore, $ \tilde e_{(1+\delta)N \wedge 2\tau} -  \tilde e_{(1-\delta)N \wedge 2\tau} \leq 4\delta N$, due to the deterministic bound on $\Delta_t \tilde e$. 
		
		Thus, by the optional stopping theorem,
		\begin{align}
			\bbE\big[\tilde e_{2(N\wedge \tau)} - \tilde e_{0} \,\big|\,\tilde e_0\big] \geq - (5\delta + \eps)N.
		\end{align}

		Again, by the deterministic bound on $\Delta_t \tilde e$, we obtain $\tilde e_{2(N\wedge \tau)} - \tilde e_{0} \leq 4N$. 
		Applying the Markov inequality, we conclude that 
		\begin{align}
			\bbP\big[\tilde e_{2(N\wedge \tau)} - \tilde e_{0} > -\eta n \,\big|\,\tilde e_0 \geq n \big] 
			&\geq \bbP\big[\tilde e_{2(N\wedge \tau)} - \tilde e_{0} > -2( 5 \delta + \eps) N \,\big|\,\tilde e_0 \geq n \big] \\
			&\geq \tfrac{ 5 \delta + \eps}{2(1 + 5 \delta + \eps)} 
			\geq \eta.
		\end{align}
		
		Since $\eta n  < n/2$, when the event in the first line occurs, $\tilde e_{2(N\wedge \tau)} > n/2$, and thus $\tau \geq N$. 
		We conclude from the above that 
		\begin{align}\label{eq:tildee_lb}
			\bbP\big[\tilde e_{2N} > (1-\eta) n \,\big|\,\tilde e_0 \geq n \big] 
			\geq \eta.
		\end{align}
		Finally, since $\tilde e$ and $e$ have the same marginals, we find that 
		\begin{align}
			\bbP\big[{\rm E}(\sfC_{2N}) \geq (1-\eta)n\big] 
			&\geq \bbP\big[\tilde e_{2N} \geq (1-\eta)n\big] \\
			&\geq \eta\bbP\big[ \tilde e_0 \geq n \big]
			 = \eta \bbP\big[{\rm E}(\sfC_0) \geq n\big].
		\end{align}
		In the last line, we assumed for simplicity that $n/\eps$ is integer-valued. This concludes the proof.

		We close by remarking that, by using the independence of the increments, it may be proved that 
		$\bbP[\tilde e_{2N} > (1-\eta) n \,|\,\tilde e_0 \geq n ]  \to 1$ as $n\to \infty$. 
		For our goal, the weaker bound \eqref{eq:tildee_lb} suffices. 
	\end{proof}
	
	\subsection{Expected increment: proof of Proposition~\ref{prop:conditional_drift}}
	
	Fix $\alpha$, $\beta$, $N$, $n$, $\theta$ and $t \geq 0$ even. 
	All constant below are independent of these choices, with $\alpha$ and $\beta$ being taken in a compact of $(0,\pi)$ and $n,N$ being large enough. 
	Also fix $\delta,\eps > 0$. 
	
	First, we argue that $\Delta_t {\rm E}$ is deterministically bounded by $4$.
	Split the vertices of $\bbL_t$ and $\bbL_{t+1}$ into those placed at the top of tracks $t_i$ with $i$ even
	and those placed at the bottom of such tracks --- this is a bi-partition of $\bbL_t$. 
	The former type of vertex is not affected by the transformation $\bfS_{t}$, and thus the vertices of this type in $\sfC_t$ are the same as those in $\sfC_{t+1}$. 
	Finally, for both $\omega_{t}$ and $\omega_{t+1}$, any vertex of the second category that is connected to $0$ is connected to a vertex of the first category by an open edge whose length is at most $2$. 
	Repeating the same argument when applying $\bfS_{t+1}$ to $\om_{t+1}$ (with the roles of the two types of vertices reversed), we conclude that $\sfC_{t+2}$ must contain a vertex in a cell neighbouring that of ${\rm Ext}(\sfC_{t})$, and vice versa.
	Given that the diameter of a cell is bounded by $4$, this provides the deterministic bound $\Delta_t {\rm E} \leq 4$.
	
	We now turn to \eqref{eq:conditional_drift}. Assume now that $1 \leq t \leq N(1-\delta)$ or $(1+\delta) N \leq t \leq 2N$. The expected increment can be decomposed as follows:
	\begin{align}
		\begin{aligned}\label{eq:Delta_sum}
		\bbE &\big[\Delta_t {\rm E}\,|\, {\rm E}(\sfC_t) \in [n\eps,(n+1)\eps)\big]\\
		&=\sum_{z} \bbE \big[\Delta_t {\rm E}\,|\, {\rm Ext}(\sfC_t) =z \big] 
		\bbP \big[{\rm Ext}(\sfC_t) =z\,|\, {\rm E}(\sfC_t) \in [n\eps,(n+1)\eps)\big], 
		\end{aligned}
	\end{align}
	where the sum is taken over all points $z \in \bbL_t$ with $\langle z,e_\theta \rangle \in [n\eps,(n+1)\eps)$.
	The summands will be controlled in different ways, depending on the value of $z$. In the following, $z$ is always assumed to satisfy $\langle z,e_\theta \rangle \in [n\eps,(n+1)\eps)$.
	
	Choose $1 \leq r \leq R$ depending on $\eps$ and $\delta$; the choice of $r$ and $R$ will be explained below. 
	The constant $q_0 > 4$ below will depend on $\eps, \delta$ and on $r$ and $R$, but not on the angles $\theta, \alpha, \beta$, nor on $n$ and $N$ above some threshold. 
	We henceforth assume $\eps n > R$, with further lower bounds imposed below. 
	
	We distinguish three scenarios depending on the location of $z = {\rm Ext}(\sfC_t)$.
	
	\begin{enumerate}[label=(\arabic*)]
		\item $z$ is in a pure block, at a distance at least $4$ from any interface; 
		\item $z$ is within distance $R$ from an interface, but not as in the first case;
		\item or $z$ is in the mixed block, at a distance at least $R$ from any interface.
	\end{enumerate}
		
	Before describing how to bound the summands in \eqref{eq:Delta_sum}, we state a separation lemma that will be useful in the following proofs.
	
	Recall from Section~\ref{sec:background_q=4} the definition of a good flower domain.
	
	\begin{lem}\label{lem:well-sep}
		There exists $c > 0$ such that, for any $\rho \geq 1$ the following holds. 
		There exists $q_0 >4$ such that, for any $q \in [4,q_0)$ and any $z$ at distance at least $4\rho$ from the horizontal axis,
		\begin{align}\label{eq:well-sep}
			\bbP\big[\text{the flower domain in $\om_t$ from $\La_{2\rho}(z)$ to $\La_{\rho}(z)$ is good } \,\big|\, {\rm Ext}(\sfC_t) = z \big] \geq c.
		\end{align}
	\end{lem}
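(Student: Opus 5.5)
Our plan is to transfer Lemma~\ref{lem:good flower domain} from $q=4$ to $q$ slightly above $4$ by a continuity argument. The difficulty is that both the conditioning $\{{\rm Ext}(\sfC_t)=z\}$ and the free half-plane boundary condition are global. The key observation is that the conditional law of the configuration near $z$, given everything outside a box $\La_{4\rho}(z)$, depends continuously on $q$. We therefore first reduce to a statement about finite-volume measures in a fixed box and only then pass to the limit $q\searrow4$.

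\textbf{Step 1 (localisation).} Condition on $\om_t$ restricted to $\La_{4\rho}(z)^c$; call this $\om_0$. By \eqref{eq:SMP}, the conditional law inside $\La_{4\rho}(z)$ is $\phi^{\xi}_{\La_{4\rho}(z)\cap\bbL_t,q}$, where $\xi$ is the boundary condition induced by $\om_0$. The box is well defined since $z$ is at distance $4\rho$ from the axis. The event $\{{\rm Ext}(\sfC_t)=z\}$, intersected with $\{\om=\om_0 \text{ off } \La_{4\rho}(z)\}$, is an event $B_{\om_0}$ on the finitely many edges of the box. It requires the cluster of $0$, or of the boundary pieces linked to $0$ through $\xi$, to reach the cell of $z$ but no cell further in direction $e_\theta$. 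It thus suffices to show that, uniformly in $\xi$ and in non-degenerate $\om_0$,
\begin{equation}
\phi^{\xi}_{\La_{4\rho}(z),q}\big[\calF \text{ good}\,\big|\, B_{\om_0}\big]\geq c .
\end{equation}

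\textbf{Step 2 (the case $q=4$).} At $q=4$, the event $B_{\om_0}$ forces, inside the box, three alternating arms in the half-plane $\{\langle\cdot,e_\theta\rangle\le\langle z,e_\theta\rangle\}$. These are a primal arm of the cluster and two dual arms on either side, the dual arms being guaranteed since nothing of the cluster crosses the hyperplane. This is exactly the structure handled by Lemma~\ref{lem:good flower domain}, or by its proof in \cite{GasManMohArmSeparation}, which is uniform over outside configurations. That proof in fact only uses \eqref{eq:RSW}-type estimates inside $\La_{4\rho}(z)$ under arbitrary boundary conditions, together with finite energy, so we obtain the bound with a constant $2c$ at $q=4$, uniformly in $\xi$ and $\om_0$. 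Since $\rho$ is fixed, one may even argue directly by finite energy, but then the constant would depend on $\rho$. The lemma requires $c$ to be independent of $\rho$, which is why we go through the arm-separation result.

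\textbf{Step 3 (continuity in $q$).} For fixed $\rho$, the box contains a bounded number of edges, and the number of boundary conditions $\xi$ and of events $B_{\om_0}$ is finite up to equivalence. The isoradial weights $p_e$ are continuous in $q$ at $q=4$; the $\sinh$ formula tends to $\frac{2\pi-2\theta_e}{2\pi-\theta_e}$ as $r\to0$. Moreover, $\phi^{\xi}_{\La_{4\rho}(z),q}[\cdot\mid B]$ is a ratio of polynomials in $(p_e,q)$ with positive denominator. Finitely many continuous functions converge uniformly, so there is a $q_0=q_0(\rho)$ such that the conditional probability stays at least $c$ for $q\in[4,q_0)$. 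Integrating over $\om_0$ then gives \eqref{eq:well-sep}.

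\textbf{Main obstacle.} The delicate point is Step 2. Lemma~\ref{lem:good flower domain} is stated for the full-plane $q=4$ measure with the event $A_3^{{\rm hp}(\theta)}(z;1,R)$, whereas here the boundary condition is an arbitrary one coming from a $q>4$ half-plane configuration on the lattice $\bbL_t$, which has interfaces between blocks. One also has to check that the extremum-cell event, rather than the exact three-arm event, still fits the framework of \cite{GasManMohArmSeparation}. I would handle this by noting that the arm-separation proof is local and uniform in boundary conditions at distance $\rho$. For $\bbL_t$ with at most two angles, \eqref{eq:RSW} holds uniformly, so the same constant works.
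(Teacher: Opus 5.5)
Your proposal is correct and follows the paper's proof essentially step for step. The paper likewise conditions on the configuration outside $\La_{4\rho}(z)$ and obtains the bound $2c$ at $q=4$, uniformly in that outside configuration, by adapting Lemma~\ref{lem:good flower domain}; it then uses continuity in $q$ of the conditioned finite-volume measures $\phi^\xi_{\La_{4\rho}(z),q}[\,\cdot\,|\,H]$, uniformly over boundary conditions and non-degenerate events, to lose at most $c$ in total variation, and integrates over the outside configuration. The point you flag as the main obstacle (that the $q=4$ input must cover the extremum event on $\bbL_t$ with arbitrary boundary conditions and with $c$ independent of $\rho$) is only asserted in the paper too (``adapting this to our setting''), so your treatment is at the same level of rigour.
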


	\begin{proof}
		For configurations sampled according to the measure with $q = 4$ the statement holds according to Lemma~\ref{lem:good flower domain}. Adapting this to our setting, we find that there exists a constant $c >0$ such that, for any $\rho\geq1$ and $z$ at a distance at least $4\rho$ from the horizontal axis, 
		any $t \geq 0$, and any configuration $\tilde \omega$ on $\La_{4\rho}(z)^c$,
		\begin{align}\label{eq:good_scale_q4}
			\phi_{\bbL_t,4}^0 \big[\calF\text{ is good} \,\big|\, \omega = \tilde\omega \text{ on $\La_{4\rho}(z)^c$ and } {\rm Ext}(\sfC_t) = z \big]\geq 2c,
		\end{align}
		where we write $\calF$ for the flower domain revealed from $\La_{2\rho}(z)$ to $\La_{\rho}(z)$ and where the inequality holds as long as the conditioning is non-degenerate.
		
		Now fix $\rho \geq1$. 
		The event in \eqref{eq:good_scale_q4} depends only on the configuration in $ \La_{4\rho}(z)$ with
		the measure in this region being of the form $\phi_{\La_{4\rho}(z),4}^\xi$ with boundary conditions $\xi$ induced by $\tilde\omega$ 
		and a conditioning on an event that depends on $\tilde\omega$. 
		
		The measures $\phi_{\La_{4\rho}(z),  q  }^\xi$ are all continuous in $q$. 
		As such, there exists $q_0 >4$ such that, for any $q \in (4,q_0]$, any boundary conditions $\xi$ and any non-degenerate event $H$,
		\begin{align}
			d_{\rm TV}\big(\phi_{\La_{4\rho}(z),q}^\xi[\, \cdot \,|\, H],\phi_{\La_{4\rho}(z),4}^\xi[\,\cdot \,|\, H]\big) \leq c
		\end{align}
		The above refers to the distance in total variation between the conditioned measures. 
		Combining the above with \eqref{eq:good_scale_q4}, we find
		\begin{align}\label{eq:good_scale_q42}
			\phi_{\bbL_t,  q  }^0 \big[\calF\text{ is good} \,\big|\, \omega = \tilde\omega \text{ on $\La_{4\rho}(z)^c$ and } {\rm Ext}(\sfC_t) = z \big]\geq c, \quad
		\end{align}
		for any configuration $\tilde \omega$ on  $\La_{4\rho}(z)^c$ such that the conditioning is non-degenerate.
		Finally, as $\omega_t$ follows the law $\phi_{\bbL_t,  q  }^0$, \eqref{eq:good_scale_q42} directly implies the desired bound. 
	\end{proof}
		
	We now return to the proof of Proposition~\ref{prop:conditional_drift} and deal with the previously defined scenarios individually

	\paragraph{Case (1):} 
	For $z$ in a pure block, at a distance at least $4$ from an interface, we have
	\begin{align}\label{eq:case frozen block}
		\bbP \big[\Delta_t {\rm E} \geq 0 \,|\, {\rm Ext}(\sfC_t) =z \big]  = 1.
	\end{align}
	Indeed, the cell of $z$ is not affected by the transformation $(\bfS_{t+1} \circ \bfS_t)$ and therefore intersects $\sfC_{t+2}$. 
	The inequality stems from the fact that some other point may overtake the extremum and thus increase the extremal coordinate.
	
	\paragraph{Case (2):} 
	We argue that the probability for ${\rm Ext}(\sfC_t)$ to be in case $2$ under the conditioning ${\rm E}(\sfC_t) \in [n\eps,(n+1)\eps)$ is small. 
	Combining this with the deterministic lower bound $\Delta_t {\rm E} \geq -4$ shows that the contribution of points $z$ in the second case to \eqref{eq:Delta_sum} is not substantially negative. 
	
	\begin{lem}\label{lem:extremum_close_to_interface}
		There exist constants $C> 1$ and $q_0 > 4$ such that, for any $q \in (4,q_0]$, if $n,N$ are sufficiently large, 
		\begin{align}\label{eq:extremum_close_to_interface}
			\begin{aligned}
			\bbP\big[{\rm Ext}(\sfC_t) \text{ is at a distance at most $R$ from an interface,} & \\ \text{ but at least $CR$ from the horizontal axis} &\,\big|\, {\rm E}(\sfC_t) \in [n\eps,(n+1)\eps) \big] < \delta.
			\end{aligned}
		\end{align}
	\end{lem}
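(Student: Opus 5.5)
We will compare, for each candidate extremum $z$ near an interface, the probability $\bbP[{\rm Ext}(\sfC_t)=z]$ with the total weight of extrema lying on the same level set $\langle \cdot ,e_\theta\rangle\in[n\eps,(n+1)\eps)$ but far from any interface. The level set is a strip of width $\eps$ orthogonal to $e_\theta$. Interfaces are horizontal lines at spacing of order $N$, and $N\geq n$ is large. So among the cells of the lattice meeting this strip inside a window of size $\asymp n$, only a fraction of order $R/N$ lie within distance $R$ of an interface. (If $\theta$ is close to horizontal, the strip is almost parallel to the interfaces; this is exactly why points within $CR$ of the horizontal axis are excluded, and why this case needs separate care below.) The plan is therefore to show that the mass $\bbP[{\rm Ext}(\sfC_t)=z]$ is comparable, up to constants, between neighbouring positions $z,z'$ at comparable scales, and then to count.

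\textbf{Step 1: local comparison.} For $z$ at distance at least $CR$ from the horizontal axis, we consider the flower domain from $\La_{2\rho}(z)$ to $\La_\rho(z)$ with $\rho\asymp R$. By Lemma~\ref{lem:well-sep}, with probability at least $c$ this flower domain is good given ${\rm Ext}(\sfC_t)=z$. On that event we resample the configuration inside it, conditionally on the outside, using the $q=4$ comparison as in the proof of Lemma~\ref{lem:well-sep}: continuity in $q$ on the finite box $\La_{4\rho}(z)$ gives a total variation error that is small once $q_0$ is close to $4$, depending on $R$. Inside the good flower domain, \eqref{eq:RSW} lets us relocate the extremum to any cell $z'$ in $\La_{\rho/2}(z)$ with $\langle z',e_\theta\rangle$ in the same $\eps$-window. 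This costs a factor at least $c'(\rho)$ and conversely. Hence
\begin{align}
\bbP[{\rm Ext}(\sfC_t)=z]\leq C'\bbP[{\rm Ext}(\sfC_t)=z'].
\end{align}
Here the $\eps$-rounding is essential: it allows $z'$ to be slightly displaced along $e_\theta$, so the conditioning stays non-degenerate.

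\textbf{Step 2: spreading along the level line.} Step 1 only compares points at distance $O(R)$. To compare a near-interface $z$ with many far-from-interface points, we use quasi-multiplicativity \eqref{eq:quasi_multiplicativity} and the arm bound \eqref{eq:generic 3-arm} at $q=4$, transferred to $q$ near $4$ only on bounded scales. The aim is to show that the conditional law of ${\rm Ext}(\sfC_t)$ along the strip is comparable on blocks of size $R\times R$ at distance $\geq R$ from each other. Concretely, we will prove that the total mass on $\{z:\text{dist}(z,\text{interface})\leq R\}$ is at most $C(R/K)$ times the mass on the union of the $K$ neighbouring $R$-blocks along the level line. These blocks lie at distance between $R$ and $KR$ from the interface, and we iterate Step 1 across them. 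Choosing $K$ large, then $R$, then $q_0$, and finally $n,N\geq KR$ yields the bound $<\delta$.

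\textbf{Main obstacle.} The hard part is Step 2. Step 1 gives only a constant-factor comparison, so chaining $K$ steps naively loses $C'^K$. To avoid this, we should compare each near-interface block with a single block at distance $\asymp R$ from it. Then we show that near-interface blocks are a $\delta$-fraction of all blocks along the level line, weighted by the masses. The difficulty is that the masses could concentrate near interfaces, which is precisely what must be ruled out. The first thing we will try is a union-bound style argument using \eqref{eq:generic 3-arm}: an exponent $1+c>1$ gives summable decay. Comparing at scale $\rho$ between a point and its translates along the level line by multiples of $\rho$, the three-arm half-plane event at $q=4$ on the annulus $\La_{R'}\setminus\La_{\rho}$ makes near-interface contributions of relative weight $(\rho/R')^{c}$, which is small. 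Near-horizontal $\theta$, where the level line nearly follows an interface, would be handled by the same arm argument, with the interface distance replaced by the horizontal offset.
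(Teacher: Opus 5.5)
There is a genuine gap. The estimate that makes the near-interface mass small is never produced, and the architecture you set up cannot produce it. Step~1, carried out in a flower domain of size $\rho\asymp R$, only gives $\bbP[{\rm Ext}(\sfC_t)=z]\le C'(R)\,\bbP[{\rm Ext}(\sfC_t)=z']$ with an $R$-dependent constant. Chaining such bounds along the level line loses $(C')^K$, as you note. The heuristic that a fraction $R/N$ of the level line lies near interfaces plays no role, since nothing controls how the conditional law of ${\rm Ext}(\sfC_t)$ is spread along a stretch of length $n$. Step~2 then says that concentration near interfaces is ``precisely what must be ruled out'' and offers only a first thing to try.

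The missing idea, which is the paper's argument, is a \emph{single} local comparison at a scale $CR\gg R$, with $C$ the parameter that produces smallness. At that scale, $z$ is compared with the whole window inside one explored domain, not with individual points. Concretely:
\begin{enumerate}
\item Explore the flower domain $\calF$ from $\La_{2CR}(z)$ to $\La_{CR}(z)$. By Lemma~\ref{lem:well-sep}, a fraction at least $c_{\rm good}$ of $\{{\rm Ext}(\sfC_t)=z\}$ occurs with $\calF$ good and the explored part of the origin's cluster attached to the primal petal and contained in $\calH^\theta_{<n\eps}$.
\item Given such an exploration, the law inside $\calF$ is $\phi^\xi_{\calF,q}$. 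Once $q_0$ is chosen depending on $CR$, this is within a factor $2$ of $\phi^\xi_{\calF,4}$ on the events $\{{\rm Ext}(\xi)=z'\}$.
\item At $q=4$, RSW, mixing and quasi-multiplicativity give $\phi^\xi_{\calF,4}[{\rm Ext}(\xi)=z]\lesssim d^{-2}(d/CR)^{1+c}$. The exponent $2$ comes from \eqref{eq:periodic 3-arm} on scales $1$ to $d$, and the exponent $1+c$ from \eqref{eq:generic 3-arm} on scales $d$ to $CR$.
\item In contrast, each of the $\gtrsim c_2(\eps)\,CR$ window points $z'\in\La_{CR/2}(z)$ at distance $\ge CR/4$ from the interface has probability $\gtrsim (CR)^{-2}$. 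This count is where the paper uses the $\eps$-rounding.
\item Hence $\bbP[{\rm Ext}(\sfC_t)=z\mid\calF]\lesssim d^{-1+c}(CR)^{-c}\,\bbP[{\rm E}(\sfC_t)\in[n\eps,(n+1)\eps)\mid\calF]$. Un-conditioning costs a factor $c_{\rm good}^{-1}$, and summing over $d\le R$ gives $O(C^{-c})$. There is no chaining and no dependence on $n$ or $N$.
\end{enumerate}
Your last paragraph gestures at this exponent gap. It lacks the exploration at the large scale, the lower bound on the window mass from bulk points, and the inner-scale factor from the periodic exponent, so it is not yet an argument.

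Separately, your treatment of directions where the level line runs along an interface is off. Those are the directions with $e_\theta$ nearly vertical, not $\theta$ nearly horizontal. The exclusion of points within $CR$ of the horizontal axis is unrelated to them. It is there because the exploration of $\La_{4CR}(z)$ must fit in the half-plane, and because at admissible times the bottom block is frozen, so extrema there are unaffected by the transformation.

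More importantly, the claim that this regime ``would be handled by the same arm argument'' cannot be right. Suppose $e_\theta$ is vertical and an interface lies within $R/2$ of the level $n\eps$; this happens for some admissible $t$ whenever the level is of order $N$, as in the application. Then every point of the window is within distance $R$ of that interface, so the conditional probability in question equals $1$. The paper's count of $\gtrsim CR$ bulk points in the window likewise presupposes that the level line crosses the interface at an angle bounded away from $0$. So near-vertical $e_\theta$ cannot be covered by an estimate of this type at all and would have to be handled outside this lemma.
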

	
	\begin{proof}[Proof of Lemma~\ref{lem:extremum_close_to_interface}]
		Fix $C \geq 1$; we will see below how to choose it. We henceforth assume $N > 2CR$. 
		Additionally, we will assume

		Consider a potential realisation $z$ of ${\rm Ext}(\sfC_t)$ with $ \langle z,e_\theta\rangle \in [n\eps,(n+1)\eps)$ and $z$ at a distance at least $4C R$ from the horizontal axis
		and with the minimal distance between $z$ and an interface being smaller than $R$. Call the latter distance $d$.
		
		Consider the following exploration procedure of the unconditioned configuration $\omega_t$.
		Explore the flower domain $\calF$ from $\La_{2CR}(z)$ to $\La_{CR}(z)$ and reveal the configuration on $\calF^c$; we call this the explored region. 
		Conditionally on the exploration, we distinguish three scenarios:
		\begin{enumerate}[label=(\roman*)]
			\item $\calF$ is not good; 
			\item the configuration in the explored region is such that, for any configuration in $\calF$, we have ${\rm Ext}(\sfC_t) \neq z$;
			\item the configuration in the explored region is such that the cluster of the origin is connected to the primal petal of $\calF$ and is contained in $\calH_{< n\eps}^{\theta}$. In this case, depending on the configuration in $\calF$, we may have ${\rm Ext}(\sfC_t) = z$.
		\end{enumerate}
		
		Let $c_{\rm good} > 0$ be the constant given by Lemma~\ref{lem:well-sep}. Then there exists $\tilde q_0 > 4$ (depending on $c_{\rm good}$ and $CR$, but not on $z$ or any other fixed quantities) such that
		\begin{align}
			\bbP\big[\text{Case (i)} \,|\,{\rm Ext}(\sfC_t) = z  \big] \leq 1 - c_{\rm good}
		\end{align}
		for any $q \in (4,\tilde q_0]$.

		In case (ii), we have
		\begin{align}
			\bbP\big[{\rm Ext}(\sfC_t) = z \,|\, \calF \big] = 0,
		\end{align}
		where the conditioning is that $\calF$ is the result of the exploration procedure described above.
		
		We conclude that 
		\begin{align}\label{eq:caseiii_and}
			\bbP\big[\text{Case (iii) and } {\rm Ext}(\sfC_t) = z  \big] \geq c_{\rm good}\cdot	\bbP\big[ {\rm Ext}(\sfC_t) = z  \big].
		\end{align}
		
		Now fix a realisation of $\calF$ as in case (iii). 
		The measure for $\omega_t$ inside $\calF$ is then given by $\phi_{\calF,q}^\xi$, where $\xi$ are the boundary conditions induced by the exploration. 
		For configurations in $\calF$, write ${\rm Ext}(\xi)$ for the extremum of the cluster of the primal petal in the direction $e_\theta$. 
		If $\omega_t$ is such that $\langle {\rm Ext}(\xi),e_\theta \rangle \in [n\eps,(n+1)\eps)$, 
		then ${\rm Ext}(\sfC_t) = {\rm Ext}(\xi)$.
	
		This applies in particular to  ${\rm Ext}(\xi) = z$, but also to other points $z'$ in the strip $\langle z',e_\theta \rangle \in [n\eps,(n+1)\eps)$.
		Consider now $4 < q_0 \leq \tilde q_0$ (depending on $CR$) such that, for any $4 \leq q \leq q_0$ and any $\calF$ and $z'$ as above, 
		\begin{align}
			\tfrac12 \leq \frac{\phi_{\calF,q}^\xi [{\rm Ext}(\xi) = z'] }{\phi_{\calF,4}^\xi [{\rm Ext}(\xi) = z']} \leq 2.
		\end{align}
		The existence of such a value $q_0$ is guaranteed by the continuity in $q$ of the measures $\phi_{\calF,q}^\xi$.

		Standard applications of \eqref{eq:RSW}, \eqref{eq:mixing} and \eqref{eq:quasi_multiplicativity}  show that, for any $z' \in \La_{CR/2}(z)$ with $\langle z',e_\theta \rangle \in [n\eps,(n+1)\eps)$,
		if we write $d'$ for the distance between $z'$ and the interface\footnote{We assume here that there is a unique interface intersecting $\calF$. Due to the assumption that $N \geq 2CR$, there exist at most two such interfaces. The case of two interfaces may be treated in a similar way, but is omitted here for simplicity.}
		\begin{align}
			c_0 \leq \frac{\phi_{\calF,4}^\xi [ {\rm Ext}(\xi) = z']} 
			{\phi_{\bbL_{\rm mix}, 4} \big[A_3^{{\rm hp}(\theta)} (z'; 1,d')\big] \phi_{\bbL_{t}, 4}^0 \big[A_3^{{\rm hp}(\theta)} (z'; d', CR)\big]}
			\leq \tfrac1{c_0},
		\end{align}
		for some universal constant $c_0 > 0$. 
		
		Applying the above to $z' =z$ and using Proposition~\ref{prop:arm_exp}, we find 
		\begin{align}\label{eq:ext_prim_z}
			\phi_{\calF,q}^\xi [{\rm Ext}(\xi) = z] 
			\leq C_0  \, d^{-2}\big(\tfrac{d}{ CR })^{1+c} = C_0  \,   d^{-1+c}{ (CR) }^{-1-c}. 
		\end{align}
		for some universal constant $C_0 >0$ and with the constant $c >0$ given by \eqref{eq:generic 3-arm}.
		
		Conversely, for any fixed $z'$ as above with $d' \geq  CR /4$, we have 
		\begin{align}
			\phi_{\calF,q}^\xi \big[{\rm Ext}(\xi) = z'\big]  \geq  c_1 ( CR) ^{-2}.
		\end{align}
		for some universal constant $c_1 > 0$.
		There exists a constant\footnote{It is here that it is essential that we condition on a rounding of $e_t$ and not its exact value.}  $c_2 = c_2(\eps) > 0$, independent of any other quantity except $\eps$,  
		such that the number of vertices $z' \in \La_{ CR /2}(z)$ with $\langle z',e_\theta \rangle \in [n\eps,(n+1)\eps)$ and  $d' \geq  CR /4$ is at least $c_2CR $. 
		Thus, summing over all such $z'$, we conclude that 
		\begin{align}\label{eq:ext_prim_z2}
			\phi_{\calF,q}^\xi\big[\langle {\rm Ext}(\xi), e_\theta \rangle \in [n\eps,(n+1)\eps)\big]   \geq  c_2c_1  (CR) ^{-1}.
		\end{align}
		
		Combining \eqref{eq:ext_prim_z} and \eqref{eq:ext_prim_z2}, we conclude that
		\begin{align}
			\bbP\big[{\rm Ext}(\sfC_t) = z \,|\, \calF \big] 
			&= \phi_{\calF,q}^\xi \big[ {\rm Ext}(\xi) = z \big] \\
			&\leq c_3\,   d^{-1+c}{ (CR) }^{-c}			\phi_{\calF,q}^\xi \big[ \langle {\rm Ext}(\xi) ,e_\theta \rangle \in [n\eps,(n+1)\eps) \big]\\
			& = c_3\,   d^{-1+c}{(CR)}^{-c} \bbP\big[  {\rm E}(\sfC_t) \in [n\eps,(n+1)\eps) \,|\,  \calF \big],
		\end{align}
		where $c_3\, = \frac{C_0}{c_1c_2} > 0$ is a universal constant. The conditioning is again that $\calF$ is the result of the exploration procedure described above. 
		
		As the above is valid for any explored flower domain $\calF$ in case (iii), we conclude that
		\begin{align}
		\bbP\big[\text{Case (iii) and } {\rm Ext}(\sfC_t) = z  \big]  
		\leq  c_3\,   d^{-1+c}(C R)^{-c}   \bbP\big[  {\rm E}(\sfC_t) \in [n\eps,(n+1)\eps)\big].
		\end{align}
		Finally, combining this with \eqref{eq:caseiii_and}, we have 
		\begin{align}
		\bbP\big[{\rm Ext}(\sfC_t) = z \,\big|\,  {\rm E}(\sfC_t) \in [n\eps,(n+1)\eps) \big]  
		\leq \tfrac{1}{c_{\rm good}}\,  c_3\,   d^{-1+c}(C R)^{-c}  .
		\end{align}
		Summing over $z$ with $d \leq R$, $\langle z,e_\theta\rangle \in [n\eps,(n+1)\eps)$, and which are at a distance at least $4CR$ from the horizontal axis, we find 
		\begin{align}
			\bbP\big[{\rm Ext}(\sfC_t) \text{ is at a distance at most $R$ from an interface,} & \\ \text{ but at least $4CR$ from the horizontal axis} &\,\big|\, {\rm E}(\sfC_t) \in [n\eps,(n+1)\eps) \big] 
			\leq   c_4   C^{-c},
		\end{align}
		where $c_4$ is a universal constant and $c >0$ is given by \eqref{eq:generic 3-arm}. 
		By choosing $C$ sufficiently large, we may render the right-hand side of the above smaller than $\delta$, thus proving~\eqref{eq:extremum_close_to_interface} with $4C$ instead of~$C$.
			\end{proof}
			
	We return now to the analysis of case (2). 
	Let $C \geq1$ and $q_0 > 4$ be the constants given by Lemma~\ref{lem:extremum_close_to_interface}.
	Assume henceforth that $4  < q \leq q_0$, and that $N$ is such that $\delta N \sin \alpha > CR+4$ and $\delta N \sin \beta > CR+4$. 
	For $ t \leq (1-\delta)N$ the tracks $t_0,\dots, t_{\delta N}$ are part of the frozen block of angle $\alpha$; 
	for  $ t \geq (1+\delta)N$, they are part of the frozen block of angle $\beta$. 
	In both cases, due to our assumption on $N$, these blocks have a height at least $CR$. 	
	Thus, case (2) implies that the extremum is within distance $R$ of an interface, but also at a distance at least $CR$ from the horizontal axis. 
	Combining the deterministic bound $\Delta_t {\rm E} \geq -4$ with \eqref{eq:extremum_close_to_interface}, we find
	\begin{align}\label{eq:case close to interface}
		\sum_{z \text{ in case (2)}} \bbE \big[\Delta_t {\rm E}\,|\, {\rm Ext}(\sfC_t) =z \big] 
		\bbP \big[{\rm Ext}(\sfC_t) =z\,|\, {\rm E}(\sfC_t) \in [n\eps,(n+1)\eps)\big] \geq -4\delta,
	\end{align}
	where the sum is taken over all possible realisations $z$ of ${\rm Ext}(\sfC_t)$ included in case (2).
	
	\paragraph{Case (3):} The remaining points $z$ are in the mixed block, at a distance at least $R$ from any interface. In this scenario, we want to relate the increment $\Delta_t {\rm E}$ to the increment $\Delta^{\rm IIC} {\rm E}$ of an IIC (see Section~\ref{sec: IIC}).
	
	To that end, we first prove that the local environment around an extremum is indistinguishable from that of an IIC extremum (up to an arbitrarily small error).
	
	\begin{lem}\label{lem:IIC_mixing}
		For any $r \geq1$, there exists a choice of $R \geq r$ and $q_0 >4$ such that, for any $q \in [4,q_0)$, any $t \geq 0$ even,
		and any $z \in \bbL_t$ in the mixed block, at a distance at least $R$ from an interface, 
		\begin{align}
			d_{\rm TV}\big(\bbP\big[\cdot \,|\, {\rm Ext}(\sfC_t) = z \big], \phi_{\bbL_{\rm mix},  4  }^{\rm IIC}\big) \leq \delta,
		\end{align}
		where the two measures refer to the configuration in $\La_r(z)$ and the latter is translated by $z$. 
	\end{lem}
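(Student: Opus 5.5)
The plan is to reduce to the $q=4$ statement, Proposition~\ref{prop:IIC mixing at q=4}, by a finite-volume continuity argument in $q$, in the same way as in the proof of Lemma~\ref{lem:well-sep}. Given $r$, I would first choose $R\ge 2r$ large enough that $C(r/R)^{c_{\rm IIC}}\le\delta/2$. The key observation is that, since $z$ lies in the mixed block at distance at least $R$ from any interface, the lattice $\bbL_t$ restricted to $\La_R(z)$ coincides with a translate of $\bbL_{\rm mix}$. Moreover, because $t$ is even, the cell structure of the mixed block matches that of $\bbL_{\rm mix}$, with cells centred on $\beta$-rhombi (cf.\ Figure~\ref{fig: block-exchange}). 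Consequently the event $\{{\rm Ext}(\sfC_t)=z\}$ is computed with the same cells as in the definition of $\phi^{{\rm IIC},\theta}_{\bbL_{\rm mix},4}$.

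The first step is to condition on the configuration $\om_0$ outside $\La_R(z)$. By \eqref{eq:SMP}, the law of $\om_t$ in $\La_R(z)$, conditioned on $\om_t=\om_0$ outside $\La_R(z)$ and on ${\rm Ext}(\sfC_t)=z$, is $\phi^\xi_{\La_R(z),q}[\,\cdot\,|\,H]$. Here $\xi$ is the boundary condition induced by $\om_0$, and $H$ is the event, determined by $\om_0$, that the cluster reaching the boundary pieces connected to the origin has its extremum at $z$. Since the origin lies outside $\La_R(z)$ (as $\eps n>R$ and $z$ is far from the axis), $H$ is exactly the event $\{{\rm Ext}_\theta(\sfC_x)=z\}$ for a suitable boundary vertex $x$, in the sense of the footnote to Proposition~\ref{prop:IIC mixing at q=4}.

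The second step is to transfer to $q=4$. The region $\La_R(z)$ is finite and contains boundedly many edges, depending on $R$ and on the compact range of $\alpha,\beta$. Only finitely many pairs $(\xi,H)$ arise up to isomorphism, since $H$ is determined by which boundary pieces are connected to the origin and by the half-space constraint. By continuity of $\phi^\xi_{\La_R(z),q}$ in $q$, there is $q_0>4$ such that for all $q\in[4,q_0)$ and all non-degenerate $(\xi,H)$,
\[
d_{\rm TV}\big(\phi^\xi_{\La_R(z),q}[\,\cdot\,|\,H],\phi^\xi_{\La_R(z),4}[\,\cdot\,|\,H]\big)\le\delta/2 .
\]
Non-degenerate events have probability bounded below by a constant depending only on $R$, by \eqref{eq:FE}, so conditioning preserves the continuity uniformly. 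This is the step I expect to be most delicate, because the bound must be uniform over all boundary conditions and all conditioning events, and must not depend on $t$, $n$, $N$ or $\theta$. Finiteness of the local configuration space together with \eqref{eq:FE} should handle this.

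Finally, Proposition~\ref{prop:IIC mixing at q=4}, applied to the $q=4$ measure $\phi_{\bbL_{\rm mix},4}[\,\cdot\,|\,\om=\om_0\text{ on }\La_R(z)^c,\ {\rm Ext}_\theta(\sfC_x)=z]$ after translating by $z$, gives distance at most $\delta/2$ on $\La_r(z)$ from the IIC measure. It applies because this measure is exactly $\phi^\xi_{\La_R(z),4}[\,\cdot\,|\,H]$. The triangle inequality then yields a bound of $\delta$ for each fixed $\om_0$. Averaging over $\om_0$ with respect to $\bbP[\,\cdot\,|\,{\rm Ext}(\sfC_t)=z]$, which is possible by convexity of the total variation distance, gives the lemma.
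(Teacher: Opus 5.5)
Your proposal is correct and follows essentially the paper's proof. Both arguments work on $\La_R(z)$, where $\bbL_t$ coincides with $\bbL_{\rm mix}$ including its cells. There you apply Proposition~\ref{prop:IIC mixing at q=4} at $q=4$ conditionally on the outside configuration, and then pass to $q$ close to $4$ by continuity in $q$ of $\phi^\xi_{\La_R(z),q}$, uniformly over boundary conditions and non-degenerate conditioning events. Your remark that \eqref{eq:FE} bounds non-degenerate events from below by a constant depending only on $R$ makes explicit the uniformity the paper simply attributes to continuity; your averaging over $\om_0$ is what the paper leaves implicit in ``the law of $\om_t$ is $\phi^0_{\bbL_t,q}$''.
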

	
	Note here that $R$ and $q_0$ depend on $r$ and $\delta$, but not on $z$, $t$ or any other quantity previously fixed. 
	
	\begin{proof}
		Fix $r \geq 1$ and let $R \geq r$ be a constant to be fixed below. Fix a point $z$ as in the statement. 
		
		First, observe that, in $\bbL_t$, the lattice in $\La_R(z)$ is identical to $\bbL_{\rm mix}$ --- including its partition into cells. 
		Thus, Proposition~\ref{prop:IIC mixing at q=4} ensures that, by choosing $R = R(r,\delta) \geq r$ sufficiently large,
		\begin{align}
			d_{\rm TV}\big(\phi_{\bbL_t,4}^0\left[\, \cdot \,|\, \om = \om_0 \text{ on } \La_R(z)^c, {\rm Ext}(\sfC_t) = z\right]
			,\phi^{\rm IIC}_{\bbL_{\rm mix},4}\big) \leq \tfrac\delta2
		\end{align}
		for any configuration $\omega_0$ on $\La_R(z)^c$ for which the conditioning is not degenerate, where the two measures refer to the configuration in $\La_r(z)$ and the latter is translated by $z$.
		 
		 Now, by continuity of the measures $\phi_{\La_R(z),  q}^\xi$, there exists $q_0 = q_0(R,\delta)>4$ such that, for any $4 <q \leq q_0$, 
		\begin{align}
			d_{\rm TV}\big(\phi_{\bbL_t,4}^0[\,\cdot\,|\, \om = \om_0 \text{ on } \La_R(z)^c, {\rm Ext}(\sfC_t) = z], \phi_{\bbL_t,q}^0[\,\cdot\,|\, \om = \om_0 \text{ on } \La_R(z)^c, {\rm Ext}(\sfC_t) = z]\big) \leq \tfrac\delta2
		\end{align}
		for any $\omega_0$ as above, 
		with both measures referring to the configuration in the full box $\La_R(z)$. 
		 
		Combining the two displays above, and keeping in mind that the law of $\omega_t$ is $\phi_{\bbL_t,  q }^0$, we obtain the desired conclusion. 
	\end{proof}
	
	We now aim to control the terms $\bbE[\Delta_t{\rm E} \,|\, {\rm Ext}(\sfC_t) = z]$ for $z$ deep within the mixed block. 
	Sample $\om^{\rm IIC}$ according to $\phi^{\rm IIC}_{\bbL_{\rm mix},  4  }$ and translate it by $z$. 
	Include this sample under the measure $\bbP$ so as to maximise the probability under $\bbP[\,\cdot\,|\, {\rm Ext}(\sfC_t) = z]$
	that $\omega_t$ and  $\om^{\rm IIC}$ are identical in $\La_r(z)$. 
	
	Write $\sfC^{\rm IIC}$ for the cluster of $z$ in $\om^{\rm IIC}$ and $\tilde \sfC^{\rm IIC}$ for the corresponding cluster in $(\bfS_{t+1}\circ\bfS_{t})(\om^{\rm IIC})$. Notice that the effect of $(\bfS_{t+1} \circ \bfS_{t})$ on the local environment of ${\rm Ext}(\sfC^{\rm IIC})$ is identical to that of $(\bfS_{\rm odd} \circ \bfS_{\rm even})$. In particular, we have that
	\begin{align}
		{\rm E}(\tilde\sfC^{\rm IIC})-{\rm E}(\sfC^{\rm IIC})= \Delta^{\rm IIC}{\rm E}.
	\end{align}
		
	There are multiple reasons why the increment $\Delta_t{\rm E}$ might differ from $\Delta^{\rm IIC}{\rm E}$. 
	We say a \emph{coupling error} occurs if the configurations $\om_t$ and $\om^{\rm IIC}$ are not identical in $\La_r(z)$. 
	An \emph{increment error} occurs if the configurations $(\bfS_{t+1}\circ\bfS_t)(\om^{\rm IIC})$ and $\om_{t+2} = (\bfS_{t+1} \circ \bfS_t)(\om_t)$ are not identical on $\La_{r/2}(z)$. 
	Finally, an \emph{IIC error} occurs if the extremum of $ \tilde\sfC^{\rm IIC}$ is not contained in $\La_{r/2}(z)$.
	
	If none of these errors occur, then $\om_{t+2}$ is identical to $(\bfS_{t+1}\circ\bfS_t)(\om^{\rm IIC})$ on $\La_{r/2}(z)$ and said box contains ${\rm Ext}( \tilde \sfC^{\rm IIC})$. In particular, ${\rm Ext}(\tilde \sfC^{\rm IIC}) \in \sfC_{t+2}$ and therefore
	\begin{align}
		{\rm E}(\sfC_{t+2}) \geq {\rm E}( \tilde\sfC^{\rm IIC}).
	\end{align}
	The inequality comes from the case where $\sfC_{t+2}$ has an extremum outside of $\La_{r/2}(z)$. 
	While we expect this to be unlikely under $\bbP[\, \cdot \,|\, {\rm Ext}(\sfC_t) = z ]$, we will not endeavour to bound its probability. 
	
	By collecting the error terms, we obtain
	\begin{align}
		\bbE\big[ \Delta_t {\rm E} \,|\, {\rm Ext}(\sfC_t) = z \big] &\geq \bbE \big[ \Delta^{\rm IIC}{\rm E} - 4 \ind_{\text{error}} \,|\, {\rm Ext}(\sfC_t) = z \big] \\
		&\geq  -4\bbP\big[ \text{error} \,|\, {\rm Ext}(\sfC_t) = z\big],\label{eq:errorsDelta}
	\end{align}
	where the first inequality is due to the deterministic bound on increments and the second one to the fact that that $\bbE[\Delta^{\rm IIC}{\rm E}] =  0$ --- see \eqref{eq:IIC drift is null}.
	
	We will now bound the probabilities of each type of error occurring.
	We start with the IIC and increment errors, as these are controlled by taking $r$ large enough. 
	Indeed, since the track exchanges are \emph{local} transformations, the probability under $\bbP[\,\cdot \,|\, {\rm Ext}(\sfC_t) = z ]$  that an increment error occurs is bounded by $Ce^{-cr}$ for universal constants $C$ and $c$ --- see Proposition~\ref{prop:track-exchange operator}.
	For an IIC error to occur, $\sfC^{\rm IIC}$ should contain a point $z' \notin \La_{r/2}(z)$ with 
	$$ \langle z', e_\theta \rangle \geq  \langle z, e_\theta \rangle -2.$$
	By a union bound on the possible values of $z'$ and using Proposition~\ref{prop:arm_exp}, the probability of an IIC error occurring may be bounded by $Cr^{-1}$ for some universal constant $C$.
	This computation is identical to the corresponding one in \cite{DCKozKraManOulRotationalInvarianceCritPlanar}. 
	Thus, by taking $r \geq 1$ sufficiently large we may ensure that 
	\begin{align}
	\bbP\big[ \text{increment or IIC error} \,|\, {\rm Ext}(\sfC_t) = z\big] \leq \delta. 
	\end{align}
	Finally, with $r$ fixed, Lemma~\ref{lem:IIC_mixing} states that one may choose $R \geq r$ large enough and $q_0 > 4$ such that, 
	\begin{align}
		\bbP\big[ \text{coupling error} \,|\, {\rm Ext}(\sfC_t) = z\big] \leq \delta. 
	\end{align}
	
	Inserting the last two bounds in \eqref{eq:errorsDelta}, we find that, for $R\geq r\geq1$ chosen as above, and for $q \in (4,q_0]$, with $q_0$ as above, 
	\begin{align}\label{eq:case mixed block}
		\bbE\big[ \Delta_t {\rm E} \,|\, {\rm Ext}(\sfC_t) = z \big] \geq -8\delta.
	\end{align}
	
	\paragraph{Conclusion of proof of \eqref{eq:conditional_drift}:}
	Take $R\geq r\geq1$ as dictated by case (3), so that \eqref{eq:case mixed block} holds.
	Consider now $q_0 >4$ sufficiently close to $4$ so that  \eqref{eq:case frozen block}, \eqref{eq:case close to interface}, and \eqref{eq:case mixed block} hold for all $4 < q<q_0$, with the values of $r,R$ chosen above. 
	Then, summing these bounds we find
		\begin{align}
			\bbE \big[\Delta_t {\rm E}\,|\, {\rm E}(\sfC_t) \in [n\eps,(n+1)\eps)\big] 
			\geq  - 12 \delta,
		\end{align}
		as required. \hfill$\square$ 
	
	\subsection{Deducing universality: proof of Theorem~\ref{thm:universality}}\label{sec:conclusion_universality}
	
	With Corollary~\ref{cor: extremum barely moves} at hand, we are almost ready to prove our main result.
	We will prove Proposition~\ref{prop:universality} and then see how it implies Theorem~\ref{thm:universality}. 

	\begin{proof}[Proof of Proposition~\ref{prop:universality}]
		Fix $\eps > 0$ and $\alpha, \beta \in (\eps,\pi-\eps)$ and $\theta \neq \frac{3\pi}2$.
		All constants below will be independent of $\alpha$, $\beta$ and $\theta$, but may depend on $\eps$. 
		
		Due to \eqref{eq:exponential_decay_L(alpha)}, we may fix $C$ such that, for any $q > 4$ and any $n\geq 1$ large enough\footnote{The lower bound on $n$ may depend on $q$, as we will take $n$ to infinity first.}, 
		\begin{align}
			\phi^0_{\bbL_+(\alpha),q}\big[0 \lr  \calH_{\geq n}^{\theta} \big] \leq 2 \phi^0_{\bbL_+(\alpha),q}\big[0 \lr  \calH_{\geq n}^{\theta} \text{ below $t_{Cn}$}\big]
		\end{align}
		and the same for $\bbL_+(\beta)$. 
		
		Set $N = Cn$ and define the lattices $\bbL_t$ accordingly. 
		Then, by the above and Corollary~\ref{cor:top_does_not_matter}, 
		\begin{align}
			\begin{aligned}
			\bbP\big[{\rm E}(\sfC_{0}) \geq n\big] &\geq \tfrac12 \phi^0_{\bbL_+(\alpha),q}\big[0 \lr  \calH_{\geq n}^{\theta}\big] 
			\quad \text{ and} 		\\
			\bbP\big[{\rm E}(\sfC_{2N}) \geq n(1-\eps)\big] &\leq 2 \phi^0_{\bbL_+(\beta),q}\big[0 \lr  \calH_{\geq n(1-\eps)}^{\theta}\big].
			\label{eq:full_alpha_to_mix}
			\end{aligned}
		\end{align}
		
		Let $q_0 >4$ be given by Corollary~\ref{cor: extremum barely moves} for $\eta = \eps$ and $C$ fixed as above. 
		Fix $q \in (4,q_0]$.
		By \eqref{eq:extremum barely moves} and \eqref{eq:full_alpha_to_mix}, we have
		\begin{align}\label{eq:hp universality}
			\phi^0_{\bbL_+(\beta),q}\big[0 \lr  \calH_{\geq n(1-\eps)}^{\theta}\big]\geq \tfrac\eps 4\phi^0_{\bbL_+(\alpha),q}\big[0 \lr  \calH_{\geq n}^{\theta}\big]
		\end{align}
		for all $n$ sufficiently large. 
		By taking the logarithm, dividing by $-n(1-\eps)$, and taking the limit $n \to \infty$ in \eqref{eq:hp universality}, we obtain
		\begin{align}
			(1-\eps)\cdot  \zeta_{\beta,q}^{\rm hp}(\theta)^{-1}  \leq \zeta_{\alpha,q}^{\rm hp}(\theta)^{-1}.
		\end{align}
		By inverting the roles of $\alpha$ and $\beta$ in the coupling and repeating the same argument, we obtain the opposite bound.
	\end{proof}

	\begin{proof}[Proof of Theorem~\ref{thm:universality}]
		We start with proving the statement about $\zeta$. 
		Fix $\eps > 0$ and let $q_0 >4$ be given by Proposition~\ref{prop:universality}. Henceforth consider $q \in (4,q_0]$. 
		Let $\theta \in [0,2\pi)$ and choose $\tilde\theta \in \{\theta,\theta+\pi\}$ (mod $2\pi$) to be upper-half-plane aiming for $\bbL(\alpha)$.
		Then, by Proposition~\ref{prop:hp_cor_len} and Proposition~\ref{prop:universality} applied to~$\tilde\theta$, 
		\begin{align}
			\zeta_{\alpha,q}(\theta) 
			= \zeta_{\alpha,q}(\tilde\theta)   
			= \zeta_{\alpha,q}^{\rm hp}(\tilde\theta)   
			\leq (1+\eps)\, \zeta_{\frac{\pi}2,q}^{\rm hp}(\tilde\theta)  
			\leq (1+\eps)\,\zeta_{\frac{\pi}2,q}(\tilde\theta)  
			=(1+\eps)\, \zeta_{\frac{\pi}2,q}(\theta).
		\end{align}
		Applying the same reasoning with the roles of $\alpha$ and $\pi/2$ exchanged we obtain the opposite bound. Note that the value of $\tilde\theta$ may a priori change for this second case.
		
		Finally, the statement about $\xi$ is readily deduced from that about $\zeta$ using \eqref{eq:convex-duality_iso}.
	\end{proof}

	\paragraph{Acknowledgements.} The authors thank S\'ebastien Ott for useful discussions. This work was supported by the Swiss National Science Foundation and the swissuniversities ``cotutelle de th\`{e}se'' grant. 

	\bibliographystyle{alpha}
	\bibliography{biblio.bib}
	
\end{document}